  \newcounter{todocounter}
\renewcommand{\epsilon}{\varepsilon}
\newcommand{\euler}{\mathrm{e}}
\newcommand{\sfuc}{\mathrm{sfuc}}
\newcommand{\RR}{\mathbb{R}}
\newcommand{\CC}{\mathbb{C}}
\newcommand{\NN}{\mathbb{N}}	
\newcommand{\ZZ}{\mathbb{Z}}
\newcommand{\TT}{\mathbb{T}}
\newcommand{\EE}{\mathbb{E}}
\newcommand{\cL}{\mathcal{L}}
\newcommand{\cF}{\mathcal{F}}
\renewcommand{\Re}{\operatorname{Re}}
\newcommand{\supp}{\operatorname{supp}}
\newcommand{\x}{z}
\newcommand{\ran}{\operatorname{Ran}}
\DeclareMathOperator{\funs}{s}
\newtheorem{theorem}{Theorem}[section]
\newtheorem{lemma}[theorem]{Lemma}
\newtheorem{proposition}[theorem]{Proposition}
\newtheorem{corollary}[theorem]{Corollary}
\theoremstyle{definition}
\newtheorem{definition}[theorem]{Definition}
\newtheorem{example}[theorem]{Example}
\theoremstyle{remark}
\newtheorem{remark}[theorem]{Remark}
\begin{document}
%
%
%
%
\title{Harmonic Analysis and Random Schr\"odinger Operators}
\author{Matthias T\"aufer}
\author{Martin Tautenhahn}
\author{Ivan Veseli\'c}
\affil{Technische Universit\"at Chemnitz \\ Fakult\"at f\"ur Mathematik \\ 09126 Chemnitz, Germany}
\date{\vspace{-2em}}
\maketitle
\tableofcontents
%
%
%
%
%
%
%
%
\section{Introduction}

This survey is based on a series of lectures given during the \emph{School on Random Schr\"odinger Operators} and the \emph{International Conference on Spectral Theory and Mathematical Physics} at the Pontificia Universidad Catolica de Chile, held in Santiago in November 2014.
As the title suggests, the presented material has two foci:
Harmonic analysis, more precisely, unique continuation properties of several natural function classes and Schr\"odinger operators, more precisely properties of their eigenvalues, eigenfunctions and solutions of associated differential equations.
It mixes topics from (rather) pure to (rather) applied mathematics, as well as classical questions and results dating back a whole century
to very recent and even unpublished ones.
The selection of material covered is based on the selection made for the minicourse, and is certainly a personal choice corresponding to the research interests of the authors.
\par
Emphasis is layed not so much on proofs, but rather on concepts, questions, results, examples and applications. In several cases, however, we do supply proofs of special cases or sketches of proofs, and use them to illustrate the underlying concepts.
As the minicourse \emph{Harmonic Analysis and Random Schr\"odinger Operators} itself, we designed the text to be accessible to advanced graduate students which have already aquired some experience with partial differential equations.
On the other hand, even experts in the field will find new results, mostly toward the end of the text.
\par
The line of thought starts with discussing unique continuation properties of holomorphic and harmonic functions. Already here we illustrate different notions of unique continuation.
Hereafter, elliptic partial differential equations are introduced and unique continuation properties of their solutions are discussed.
Then we shift our attention to domains and differential equations with an inherent multiscale structure.
The question here is, whether appropriately collected local data of a function give good estimates to global properties of the function.
In the framework of harmonic analysis the Whittaker--Nyquist--Kotelnikov--Shannon Sampling and the Logvinenko-Sereda Theorem are examples of such results.
From here it is natural to pursue the question whether similar and related results can be expected for (classes of) solutions of differential equations.
This leads us to quantitative unique countinuation bounds which are obtained by the use of Carleman estimates.
In the context of random Schr\"odinger operators they have risen to some prominence recently since they facilitated the resolution of some long standing problems in the field.
We present several unique countinuation theorems tailored for this applications.
Finally, after several results on the spectral properties of random  Schr\"odinger operators, an application to control of the heat equation is given.
\subsection{Unique continuation}
Intuitively, a unique continuation property describes the phenomenon that certain global properties of appropriately chosen function classes are uniquely determined by knowledge of the function locally, that is on arbitrarily small balls around a reference point.
The following definition is classic. We denote by $B (x,r) = \{y \in \RR^d \mid \lvert x-y \rvert < r\}$ the open ball with center $x \in \RR^d$ and radius $r > 0$. If $x = 0$ we write $B (r)$ instead of $B (0,r)$.
\begin{definition}  \label{def:ucp}
 Let $\Omega \subset \RR^d$ be open.
 A class of functions $\cF = \cF(\Omega) \subset \{f\colon\Omega \to \CC\mid   f \text{ measurable}\}$ satisfies the
  \begin{itemize}
  \item \emph{(weak) unique continuation property}, if every $f \in \cF$ which vanishes on a nonempty and open subset $W \subset \Omega$ vanishes everywhere. In other words, we have the implication
 \begin{equation}
 \label{eq:weak_ucp}
  \exists \,W \,  \subset \Omega\ \text{nonempty and  open, with } f \equiv 0\ \text{on}\ W \Rightarrow f \equiv 0.
 \end{equation}
\item \emph{strong unique continuation property}, if every $f \in \cF$ which vanishes of every polynomial order at some point $x_0 \in \Omega$ vanishes everywhere. In other words, we have the implication
 \begin{equation}
  \label{eq:strong_ucp}
  \exists \, x_0 \in \Omega\ \text{such that}\ \forall\, N \in \NN : \lim_{\epsilon \to 0} \epsilon^{-N} \int_{B(x_0, \epsilon)} \lvert f \rvert \mathrm{d} x = 0 \Rightarrow f \equiv 0.
 \end{equation}
\end{itemize}
\end{definition}
In the present manuscript we also introduce the following notions which
have been considered previously in the literature and/or are suitable for the discussion which follows.
\begin{remark}
 Let $\Omega \subset \RR^d$ be open.
 A class of functions $\cF \subset \{f\colon\Omega \to \CC\mid   f \text{ measurable}\}$ satisfies the
\begin{itemize}
 \item \emph{semi-strong unique continuation property}, if every $f \in \cF$ which vanishes of some exponential order at some point $x_0 \in \Omega$ vanishes everywhere. In other words, we have the implication
 \begin{equation}
  \label{eq:semi-strong_ucp}
  \exists \, x_0 \in \Omega \text{ and  } a,b > 0 \text{ with } \lim_{\epsilon \to 0} \euler^{a \epsilon^{-b}} \int_{B(x_0, \epsilon)} \lvert f \rvert \mathrm{d} x = 0 \Rightarrow f \equiv 0.
 \end{equation}
\item \emph{very strong unique continuation property of order $N_0 > 0$}, if there is an $\epsilon$-polynomial lower bound of order $N_0$ for the $\cL^1$-norm
of $0 \not \equiv f \in \cF$ on $\epsilon$-balls.
More precisely, if for each $x_0\in \Omega$ and $0 \not \equiv f \in \cF$ there is a constant $C=C(x_0, f)$ and a radius $\epsilon_0=\epsilon_0(x_0, f)\in (0,\infty)$ such that
\begin{equation}
  \label{eq:very_strong_ucp}
C\epsilon^{N_0} \leq  \int_{B(x_0, \epsilon)} \lvert f \rvert \mathrm{d} x \quad \text{ for all } \epsilon\in (0,\epsilon_0).
 \end{equation}
\end{itemize}
The notion ``weak'' to ``very strong'' makes sense since \eqref{eq:very_strong_ucp} $\Rightarrow$
\eqref{eq:strong_ucp} $\Rightarrow$
\eqref{eq:semi-strong_ucp} $\Rightarrow$
\eqref{eq:weak_ucp}.
In fact, the only non-trivial implication is \eqref{eq:very_strong_ucp} $\Rightarrow$ \eqref{eq:strong_ucp}, so let us give a short proof.
\begin{proof}[Proof of (4) $\Rightarrow$ (2)]
 Assume that $f$ satisfies the very strong unique continuation property of order $N_0 \in \NN$ and that there is $x_0 \in \Omega$ such that for all $N \in \NN$ (hence in particular for some $N > N_0$) we have $\lim_{\epsilon \to 0} \epsilon^{-N} \int_{B(x_0, \epsilon)} \lvert f \rvert \mathrm{d}x = 0$. Using $f \not \equiv 0$, we find by the very strong unique continuation property
 $\lim_{\epsilon \to 0} \epsilon^{-N} \int_{B(x_0, \epsilon)} \lvert f \rvert \mathrm{d}x \geq \lim_{\epsilon \to 0} C \epsilon^{N_0 - N} = \infty$ since $N > N_0$, a contradiction.
\end{proof}
It makes sense to consider uniform variants of these properties, for instance uniform w.r.t.~the center of the ball $x_0$ or uniform w.r.t.~the functions in the set $\cF$.
Sometimes such uniformity is easy to achieve, somtimes not.
A nice example where compactness and periodicity are used to enhance a simple unique continuation property to a unique continuation property, uniform over several scales, is given in Section 4 of \cite{CombesHK-03}.
\par
In particular one has the following uniform variants of the \emph{very strong unique continuation property of order $N_0$}:
We say that a class of functions $\cF \subset \{f\colon\Omega \to \CC\mid   f \text{ measurable}\}$ satisfies the
\begin{itemize}
 \item
 very strong unique continuation property of order $N_0$, \emph{uniform in the base point},
 if for every $0\not\equiv f \in \cF$ there is a constant $C=C(f)$ and a radius $\epsilon_0=\epsilon_0( f)\in (0,\infty)$
such that
\begin{equation*}
C\epsilon^{N_0} \leq  \int_{B(x_0, \epsilon)} \lvert f \rvert \mathrm{d} x \quad \text{ for all } x_0 \in \Omega \text{ and } \epsilon\in (0,\epsilon_0).
 \end{equation*}
\end{itemize}
It may well happen that the behaviour of functions in $\cF$ near the boundary of $\Omega$ is less regular
than, say, the $r$-interior $\Omega_r := \{x \in \Omega \mid \operatorname{dist} (x ,\partial \Omega)>r\}$ for some $r>0$.
In this case we would not have the above type of uniformity. We also say that $\cF$ satisfies the
\begin{itemize}
 \item
 very strong unique continuation property of order $N_0$, \emph{uniform in the set $\cF$},
 if for every $x_0 \in \Omega$ there is a radius $\epsilon_0=\epsilon_0(x_0)\in (0,\infty)$ such that
 for all  $0\not\equiv f \in \cF$ there is a constant $C=C(x_0,f)\in (0,\infty)$ with
\begin{equation*}
C\epsilon^{N_0} \leq  \int_{B(x_0, \epsilon)} \lvert f \rvert \mathrm{d} x \quad
\text{ for all }
\epsilon\in (0,\epsilon_0).
 \end{equation*}
\item
very strong unique continuation property of order $N_0$, \emph{uniform in the base point and in the set $\cF$},
 if there is a radius $\epsilon_0\in (0,\infty)$ such that for every $0\not\equiv f \in \cF$
 there is a constant a constant $C=C(f)\in (0,\infty)$ with
\begin{equation}
\label{eq:uniform-very-strong}
C\epsilon^{N_0} \leq  \int_{B(x_0, \epsilon)} \lvert f \rvert \mathrm{d} x \quad
\text{ for all } x_0 \in \Omega \text{ and } \epsilon\in (0,\epsilon_0).
 \end{equation}
\end{itemize}
One might wonder whether the constant $C=C(f)$ could be chosen uniform in $\cF$, as well.
This cannot be expected if $\cF$ is closed under scalar multiplication, as it is the case for vector spaces, since then for sufficiently small $\lambda > 0$
\[
\int_{B(x_0, \epsilon)} \lvert \lambda f \rvert \mathrm{d} x
=
\lambda \int_{B(x_0, \epsilon)} \lvert f \rvert \mathrm{d} x
< C \epsilon^{N_0} .
\]
Thus we see that it will be natural to complement the requirement $f \in \cF$ with some kind of normalization,
e.g.\ $\int \lvert f \rvert^p=1$. Alternatively, the normalization can be already taken care of in the function class $\cF$.
Then we would be dealing, e.g., with the unit sphere in a normed linear space.
In this situation one can obviously drop the condition $f \not \equiv 0$ which appeared several times above.
\end{remark}
\begin{remark}
\label{r:normalization}
Another way to allow $\cF$ to be a vector space would be to multiply the left hand side of \eqref{eq:uniform-very-strong} with the norm of $f$.
Later, in Section \ref{s:multiscale}, we will do this, but in an $\cL^2$-setting.
This means that we will study inequalities of the form
\begin{equation}
 \label{eq:uniform-very-strong-connection-ucp}
 C \epsilon^{N_0} \int_\Omega \lvert f \rvert^2 \mathrm{d}x
 \leq
 \int_{B(x_0, \epsilon)} \lvert f \rvert^2 \mathrm{d} x \quad
\text{ for all } x_0 \in \Omega \text{ and } \epsilon\in (0,\epsilon_0)
\end{equation}
and similar expressions where $B(x_0, \epsilon)$ has been replaced by a more general set, e.g.\ a disjoint union of $\epsilon$-balls. We will call estimates as in \eqref{eq:uniform-very-strong-connection-ucp} quantitative unique continuation estimates.
\end{remark}
\subsection{Harmonic and holomorphic functions}
\begin{example}[Polynomials of degree one on $\RR$]
 Let $\cF = \mathcal{P}_1(\RR)$ be the space of affine polynomials on $\RR$ with degree at most one, that is $\Delta f = 0$, where $\Delta$ denotes the Laplace operator or the second derivative.
Every $f \in \mathcal{P}_1(\RR)$ can be written as $f(x) = ax + b$ where $a,b \in \RR$. Now there are three possibilities:
 \begin{itemize}
  \item If $a \neq 0$, there is exactly one root and $f$ vanishes on no ball $B(x_0, \epsilon)$
  \item If $a = 0$, $b \neq 0$, then $f$ never vanishes.
  \item If $a = 0$, $b = 0$, then $f \equiv 0$ on $\RR^d$.
 \end{itemize}
 Thus, $\cF$ satisfies the weak unique continuation property as well as the semi-strong and the strong unique continuation property.
Moreover, $\cF$ satisfies the very strong unique continuation property of order $2$ since a non-zero function $f \in \mathcal{P}_1(\RR)$ can vanish at most of order $1$.
 \end{example}
\begin{example}[Harmonic and holomorphic functions]
 One can generalize this to higher dimensions and an open connected $\Omega \subset \RR^d$. The space of harmonic functions on $\Omega$ is $\{ f\in C^2(\Omega) \mid \Delta f \equiv 0 \}$.
 It is known, see for example \cite{Rudin-70}, that such functions are real analytic and thus the space of harmonic functions satisfies the weak, the semi-strong and the strong unique continuation property.
 The same holds for holomorphic functions $\CC \to \CC$.
\end{example}
By definition, the various unique continuation properties above concern local behaviour of a function at a point.
Considering certain natural classes of functions one observes that there is a  connection to global properties, for instance the growth behaviour at infinity.
\begin{example}[A counterexample] \label{ex:powers}
For $k \in \NN$ let $f_k : \CC \to \CC$, $z \mapsto z^k$. Since $f_k$ is holomorphic, it is analytic and hence satisfies the weak, the semi-strong and the strong unique continuation property.
 For large $k$, however, $f_k$ vanishes arbitrarily fast at $z_0 = 0$.
 Thus, for any $N_0$ the space $\{ f : \CC \to \CC \mid f\ \text{holomorphic} \}$ fails to satisfy the very strong unique continuation property \eqref{eq:very_strong_ucp} of order $N_0$.
 Furthermore, all $f_k$ are uniformly bounded on $B(1)$ by $1$. Thus, a local bound is not sufficient for very strong unique continuation.
 However, we observe that for large $k$, $f_k$ grows fast at infinity.
 \end{example}
One might hope that nonzero holomorphic functions cannot vanish faster at $0$ than they grow at infinity.
This observation is made more precise in the following theorem and its corollary.
It is known as Hadamard's three circle theorem and can for instance be found in \cite{Littlewood-1912}, where it is stated as an already known result.
 \begin{theorem}[Hadamard's three circle theorem] \label{t:hadamard3circle}
  Let $r_1 < r_2 < r_3$, $f$ be a holomorphic function on the annulus $r_1 \leq \lvert z \rvert \leq r_3$ and $M_f(r_i) := \max_{ \lvert z \rvert = r_i} \lvert f(z) \rvert$.
 Then
 \begin{equation}
 \label{eq:hadamard}
  \log \left( \frac{r_3}{r_1} \right) \log M_f(r_2)
  \leq
  \log \left( \frac{r_3}{r_2} \right) \log M_f(r_1)
  +
  \log \left( \frac{r_2}{r_1} \right) \log M_f(r_3) .
\end{equation}
 \end{theorem}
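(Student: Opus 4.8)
The plan is to recognize that~\eqref{eq:hadamard} is exactly the statement that $t \mapsto \log M_f(\euler^{t})$ is convex in $t=\log r$, and to derive this convexity from the maximum modulus principle applied to a suitably weighted version of $f$. Throughout one may assume $f \not\equiv 0$, so that $M_f(r_i)>0$ by the identity theorem; otherwise the inequality is vacuous.

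First I would fix a real parameter $\alpha$ and consider, on the closed annulus $A=\{r_1\le |z|\le r_3\}$, the function
\[
u(z) := \alpha \log|z| + \log|f(z)| ,
\]
with the convention $u(z)=-\infty$ at zeros of $f$. This $u$ is the sum of the harmonic function $\alpha\log|z|$ (note $0\notin A$) and the subharmonic function $\log|f|$, hence subharmonic on $A$. By the maximum principle for subharmonic functions, $u$ attains its supremum over $A$ on the bounding circles $|z|=r_1$ and $|z|=r_3$. Since $\max_{|z|=r_i}u = \alpha\log r_i + \log M_f(r_i)$, we obtain for every $z$ with $|z|=r_2$
\[
\alpha\log r_2 + \log M_f(r_2) \le \max\bigl(\alpha\log r_1+\log M_f(r_1),\ \alpha\log r_3+\log M_f(r_3)\bigr).
\]
Now I would choose $\alpha=\bigl(\log M_f(r_1)-\log M_f(r_3)\bigr)/\bigl(\log r_3-\log r_1\bigr)$, which makes the two quantities inside the maximum equal, so that the right-hand side is $\alpha\log r_1+\log M_f(r_1)$. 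Substituting this $\alpha$, rearranging, and multiplying through by $\log(r_3/r_1)>0$ gives~\eqref{eq:hadamard} after a short algebraic manipulation; since no inequality was wasted, the bound is sharp, with equality holding e.g.\ for the monomials $f(z)=z^{k}$ of Example~\ref{ex:powers}.

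The one point requiring care is the non-integer exponent $\alpha$: the ``function'' $z^{\alpha}f(z)$ is generally multivalued, which is why the argument is run through the single-valued subharmonic $u=\alpha\log|z|+\log|f|$ rather than through a holomorphic function, and why zeros of $f$ (where $u=-\infty$) cause no difficulty. Readers who prefer to avoid subharmonic functions can instead first prove the estimate for rational $\alpha=p/q$ with $q\in\NN$, $p\in\ZZ$: then $z^{p}f(z)^{q}$ is honestly holomorphic on a neighbourhood of $A$, the classical maximum modulus principle applies to it, and extracting $q$-th roots of the resulting modulus inequality yields the claim for rational $\alpha$; one then passes to the optimal real $\alpha$ by density together with the continuity of $t\mapsto \log M_f(\euler^{t})$. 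I do not expect any serious obstacle beyond this bookkeeping once the correct weighted function and the correct value of $\alpha$ are in hand.
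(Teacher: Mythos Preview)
Your argument is correct and is in fact the classical proof: the subharmonicity of $\alpha\log|z|+\log|f(z)|$ together with the maximum principle, followed by the optimal choice of $\alpha$, yields \eqref{eq:hadamard} exactly as you outline; the algebra checks out, and the alternative route via rational $\alpha$ and $z^{p}f(z)^{q}$ is also valid.

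There is nothing to compare against, however: the paper does not supply its own proof of Theorem~\ref{t:hadamard3circle}. It merely states the result, attributes it to the classical literature (citing \cite{Littlewood-1912}), and then observes---as you do---that the inequality expresses the convexity of $\log r\mapsto \log M_f(r)$. The only proof given in that passage is of Corollary~\ref{cor:growth-vs-vanishing}, which \emph{uses} Hadamard's theorem rather than establishing it. So your write-up goes strictly beyond what the paper does here.
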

 If we choose $\epsilon = r_3 / r_2 = r_2 / r_1$, then \eqref{eq:hadamard} becomes
 \[
   2\log M_f(r_2) \leq \log M_f(\epsilon r_2) + \log M_f(r_2 / \epsilon) .
 \]
Thus the theorem is a statement about convexity of the map $\log (r) \mapsto \log M_f(r)$.
 \begin{corollary}\label{cor:growth-vs-vanishing}
  Let $f : \CC \to \CC$ be holomorphic. Assume that $f$ grows slower at $\infty$ than it vanishes at $0$, i.e. we have
  \begin{align*}
   \liminf_{\epsilon \to 0} M_f(  \epsilon) \cdot M_f(1/ \epsilon) = 0 .
  \end{align*}
  Then $f \equiv 0$.
 \end{corollary}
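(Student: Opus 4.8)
The plan is to feed Hadamard's three circle theorem (Theorem~\ref{t:hadamard3circle}) with radii in geometric progression, namely $r_1 = \epsilon$, $r_2 = 1$, $r_3 = 1/\epsilon$ for $\epsilon \in (0,1)$; this is exactly the substitution indicated in the paragraph following the theorem. Since $f$ is entire, it is in particular holomorphic on every closed annulus $\{\epsilon \le \lvert z \rvert \le 1/\epsilon\}$, so the hypotheses of the theorem are met. With this choice one has $\log(r_3/r_1) = 2\log(1/\epsilon)$ and $\log(r_3/r_2) = \log(r_2/r_1) = \log(1/\epsilon)$, so that \eqref{eq:hadamard} becomes
\[
 2\log(1/\epsilon)\,\log M_f(1) \;\leq\; \log(1/\epsilon)\bigl(\log M_f(\epsilon) + \log M_f(1/\epsilon)\bigr).
\]
Dividing by the strictly positive number $\log(1/\epsilon)$ and using $\log a + \log b = \log(ab)$ yields
\[
 2\log M_f(1) \;\leq\; \log\bigl( M_f(\epsilon)\, M_f(1/\epsilon)\bigr) \qquad \text{for all } \epsilon\in(0,1).
\]

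Next I would bring in the hypothesis. By $\liminf_{\epsilon\to0} M_f(\epsilon)\,M_f(1/\epsilon) = 0$ there is a sequence $\epsilon_n \to 0$, which we may assume lies in $(0,1)$, with $M_f(\epsilon_n)\,M_f(1/\epsilon_n)\to 0$. If $M_f(\epsilon_n)\,M_f(1/\epsilon_n)=0$ for some $n$, then one of the two maxima is zero, i.e.\ $f$ vanishes identically on some circle $\lvert z\rvert=\rho$. Otherwise $\log\bigl(M_f(\epsilon_n)M_f(1/\epsilon_n)\bigr)\to -\infty$, and since the left-hand side $2\log M_f(1)$ of the displayed inequality is independent of $n$, it must be $-\infty$, that is $M_f(1)=0$, so $f$ vanishes on the unit circle. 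In every case we conclude that $f$ vanishes on a full circle $\lvert z\rvert = \rho$.

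It remains to note that a holomorphic function vanishing on a circle vanishes identically: by the maximum modulus principle $\lvert f(z)\rvert \le \max_{\lvert z\rvert=\rho}\lvert f\rvert = 0$ for all $z$ in the open disc $B(\rho)$, so $f\equiv 0$ on a nonempty open set, and since holomorphic functions are real analytic they satisfy the weak unique continuation property, forcing $f\equiv 0$ on $\CC$. The argument is a direct substitution into Theorem~\ref{t:hadamard3circle}, so there is no real obstacle; the only point requiring a little care is the bookkeeping with the $\liminf$ together with the degenerate case in which some $M_f(\rho)$ is already zero (and, relatedly, that $\log$ may take the value $-\infty$), which is precisely why the final reduction via the maximum modulus principle and weak unique continuation is included.
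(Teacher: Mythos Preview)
Your proof is correct and follows essentially the same route as the paper's: apply Hadamard's three circle theorem with $r_1=\epsilon$, $r_2=1$, $r_3=1/\epsilon$ to obtain $M_f(1)^2 \le M_f(\epsilon)M_f(1/\epsilon)$, use the hypothesis to force $M_f(1)=0$ (so $f$ vanishes on the unit circle), and then invoke the maximum principle together with analyticity to conclude $f\equiv 0$. The only difference is cosmetic: you work explicitly with a subsequence realizing the $\liminf$ and separate out the degenerate case $M_f(\rho)=0$, whereas the paper passes directly to the limit in the exponentiated inequality.
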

\begin{proof}
Let $z_0 \in \CC$ with $\lvert z_0 \rvert = 1$.
We apply Hadamard's three circle theorem with $r_1 = \epsilon$, $r_2 = 1$ and $r_3 = 1 / \epsilon$ and obtain for all $\epsilon > 0$
\[
  2  \log M_f(1)
  \leq
   \log M_f(\epsilon )
  +
   \log M_f(1 / \epsilon)
\]
and thus
\[
 \lvert f(z_0)\rvert^2 \leq M_f(1)^2 \leq  M_f(\epsilon) \cdot M_f(1 / \epsilon).
\]
Letting $\epsilon$ tend to $0$, we find by our assumption that $f \equiv 0$ on $\{z \in \CC \mid \lvert z \rvert = 1\}$. Since $f$ is holomorphic, $f \equiv 0$ on $\{z \in \CC \mid \lvert z \rvert \leq 1\}$ by the maximum principle. By analyticity we obtain $f \equiv 0$.
\end{proof}
Instead of holomorphic functions $f_k: \CC \to \CC, z \mapsto z^k$, we could also have considered the harmonic functions $F_k: \RR^2 \to \RR^2, (x,y) \mapsto \mathrm{Re} ( x + i y)^k$ where we use the identification $\CC \cong \RR^2$. Since there is a natural connection between holomorphic and harmonic functions, namly the real and imaginary part of every holomorphic function are harmonic, we would have found similar relations between vanishing at $0$ and growth at $\infty$ for harmonic functions on $\RR^2$.
\par
Another example concerns the spherical harmonics on the sphere, cf.~\cite{ColindeVerdiere-85}.
\begin{example}[Spherical harmonics] \label{ex:spherical-harmonics}
 Let $\mathbb{S}^2 := \{ x \in \RR^3 \mid \lvert x \rvert = 1 \}$ be the $2$-sphere. There is a special orthonormal base of $\cL^2(\mathbb{S}^2)$, called the spherical harmonics $\{ Y_{l,m} \mid l \in \NN, -l \leq m \leq l \}$ such that
 \begin{equation*}
  \begin{cases}
  - \Delta Y_{l,m} &= l(l+1) Y_{l,m} \quad \text{and} \\
  \frac{\partial}{\partial \phi} Y_{l,m} &= i m Y_{l,m},
  \end{cases}
 \end{equation*}
where $\partial / \partial \phi$   denotes the derivative with respect to the $\phi$ coordinate in spherical coordinates.
\par
We study the sequence $Y_{l,l}$, $l \in \NN$. In spherical coordinates they are of the form
\[
Y_{l,l} = c_l  \cos(\theta)^l \exp (\mathrm{i} l \phi ), \quad \theta \in [-\pi / 2 , \pi / 2], \quad \phi \in [0,2\pi),
\]
where $c_l > 0$ is a normalization factor.
\par
Letting $E_r $ be a tubular neighborhood around the equator, that is $E_r  := \{ (\sigma,\theta) \in \mathbb{S}^2 \mid \lvert \theta \rvert < r  \}$, then the mass of $Y_{l,l}$ concentrates exponentially around the equator if $l$ tends to $\infty$, i.e.\ there is $C = C(r ) > 0$ such that
\begin{equation}
 \label{eq:concentration_at_equator}
 \lim_{l \to \infty} \euler^{C(r ) l} \int_{\mathbb{S}^2 \setminus E_r } Y_{l,l} = 0.
\end{equation}
The interesting points to consider are at the poles and we will consider the order of vanishing of the eigenfunctions
at these points.
\par
If we consider the class of functions $\cF = \{Y_{l,l} \mid l \in \{1, \ldots,l_{\max}\}\}$  for some $l_{\max}\in \NN$, then the uniform very strong unique continuation principle as in \eqref{eq:uniform-very-strong} is satisfied, as the following calculation shows.
\par
Since the only zero of $Y_{l,l}$ is at the pole, we have for all $l = 1, ..., l_{\max}$, all $x_0 \in \mathbb{S}^2$ and all $\epsilon < \pi /2$
\[
 \int_{B(x_0, \epsilon)} \lvert Y_{l,l} \rvert \mathrm{d}x \geq \int_{B(p, \epsilon)} \lvert Y_{l,l} \rvert \mathrm{d}A  ,
\]
where $p$ is a pole (by symmetry, we can assume that $p$ is the north pole).
Note that balls on the sphere are defined with respect to the geodesic distance.
Now,
\begin{align*}
 \int_{B(p, \epsilon)} \lvert Y_{l,l} \rvert \mathrm{d}x
&  = \int_0^{2 \pi} \mathrm{d} \phi \int_{\pi/2 - \epsilon}^{\pi/2} \mathrm{d} \theta c_l \cos (\theta)^l \sin(\theta)   \\
&  = 2 \pi c_l \int_{\pi/2 - \epsilon}^{\pi/2} \cos (\theta)^l \sin(\theta) \\
&  = \frac{2 \pi c_l}{l + 1} \cos (\pi/2 - \epsilon)^{l+1} = \frac{2 \pi c_l}{l + 1} \sin(\epsilon)^{l+1}.
\end{align*}
The function $\epsilon \mapsto \sin(\epsilon)^{l+1}$ vanishes of order $l+1$ at $0$. Thus for every $Y_{l,l}$, there is an $l+1$-polynomial lower bound, uniform on $\mathbb{S}^2$, i.e. there is $C = C(Y_{l,l}) > 0$ such that
\[
 \int_{B(x_0, \epsilon)} \lvert f \rvert \geq C(f) \epsilon^{l+1}\ \text{for all}\ x_0 \in \mathbb{S}^2,\ \epsilon < \pi/2.
\]
Since in this case, $\cF$ is a finite set, we can choose $C = \min_{l = 1}^{l_{\max}} C(Y_{l,l})$
and find the uniform very strong unique continuation principle of order $N_0 = l+1$ as in \eqref{eq:uniform-very-strong}.
\par
On the other hand the set $\{Y_{l,l} \mid l \in \NN\}$ does not satisfy the uniform very strong unique continuation principle.
In fact, given $N_0 > 0$, we see by the above calculation that for $l_0 = \lceil N_0 \rceil \in \NN$,
the function $Y_{l_0, l_0}$ vanishes of order $l_0 + 1 > N_0$ at the poles, thus
\eqref{eq:uniform-very-strong} cannot hold.
\end{example}
The limit in \eqref{eq:concentration_at_equator} tells us that for high energies (high eigenvalues)
the eigenfunctions are more and more unevenly distributed on the sphere.
Of course, the choice of eigenbasis for the Laplace operator on the sphere and the 'diagonal' subsequence plays a crucial role here.
Since the eigenvalues of the Laplacian on the sphere are highly degenerate one has a lot of freedom when choosing an orthonormal basis of eigenfunctions.
With an appropriate choice of basis and enumeration, it may be well possible that eigenfunctions for high eigenvalues do obey an equidistribution or quantum ergodicity property on the sphere excluding a behaviour like \eqref{eq:concentration_at_equator}.
In fact, this has been established to hold almost surely for a random choice of eigenbasis by Zelditch.
\par
Note that the $l$-th eigenvalue level of $- \Delta$ on $\mathbb{S}^2$ is $(2l - 1)$-fold degenerate.
Thus, the set of all possible choices of orthonormal bases of $\cL^2(\mathbb{S}^2)$, consisting of eigenfunctions of $\Delta$, can be identified with the product $U(1) \times U(3) \times U(5) \times ...$ where $U(n)$ denotes the unitary group on $\CC^n$.
This product naturally carries the structure of a probability measure, the Haar measure $\mu_{\mathrm{Haar}}$, see \cite{Zelditch-92} for details. The following result can be found in \cite{Zelditch-92}. We formulate a simplified version for multiplication operators.
\begin{theorem}[Almost sure quantum ergodicity on the sphere]
Let $f \in C^\infty(\mathbb{S}^2)$. For $\mu_{\mathrm{Haar}}$-almost every orthonormal basis $(\phi_j)_{j \in \NN}$ of $-\Delta$-eigenfunctions on $\cL^2(\mathbb{S})$, that is $- \Delta \phi_j = E_j \phi_j$, we have
\begin{align*}
\lim_{E \to \infty} \frac{1}{N(E)}\sum_{j \in \NN; E_j \leq E} \lvert \langle \phi_j, f\phi_j\rangle -\bar f  \rvert^2 = 0
\end{align*}
where $\bar f = \mathrm{Vol}(\mathbb{S}^2)^{-1}\int_{\mathbb{S}^2} f(x) \mathrm{d}x$ and $N(E)$ ist the number of eigenvalues not exceeding the energy $E$.
\end{theorem}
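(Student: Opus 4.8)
The plan is to exploit the rigidity of the spectral decomposition of $-\Delta$ on the round sphere: the eigenvalues are $l(l+1)$, $l\in\NN_0$, the eigenspace $V_l=\operatorname{span}\{Y_{l,m}:\lvert m\rvert\le l\}$ has dimension $d_l=2l+1$, and a $\mu_{\mathrm{Haar}}$-random eigenbasis is precisely an independent choice, for each $l$, of a Haar-random orthonormal basis $(\phi_j)_{\phi_j\in V_l}$ of $V_l$. First I would reduce to a single energy level. Let $P_l$ denote the orthogonal projection onto $V_l$ and $A_l:=P_lM_fP_l$ the compression of the multiplication operator $M_f$ to $V_l$; one may assume $f$ real valued (splitting into real and imaginary parts otherwise), so $A_l$ is Hermitian. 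The structural point is that $P_l$ commutes with the $SO(3)$-action, since eigenspaces of the rotation-invariant operator $\Delta$ are rotation invariant; hence its integral kernel satisfies $K_l(x,x)\equiv\operatorname{Tr}(P_l)/\operatorname{Vol}(\mathbb{S}^2)=d_l/(4\pi)$, which is nothing but the addition theorem $\sum_m\lvert Y_{l,m}\rvert^2\equiv d_l/(4\pi)$. Consequently $\operatorname{Tr}(A_l)=\int_{\mathbb{S}^2}K_l(x,x)f(x)\,\drm x=d_l\,\bar f$ \emph{exactly}, whereas $\operatorname{Tr}(A_l^2)=\lVert P_lM_fP_l\rVert_{\mathrm{HS}}^2\le\lVert P_lM_f\rVert_{\mathrm{HS}}^2=\operatorname{Tr}(P_lf^2P_l)=\tfrac{d_l}{4\pi}\lVert f\rVert_{\cL^2}^2$. (Only $f\in\cL^2(\mathbb{S}^2)$ is used; smoothness plays no role here.)

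Next I would average over the Haar measure on $U(d_l)$. Identifying $V_l\cong\CC^{d_l}$ through the basis $(Y_{l,m})_m$, each $\phi_j$ is a uniformly distributed unit vector $v$, so $\langle\phi_j,f\phi_j\rangle=v^*A_lv$. From the elementary $U(d_l)$-invariant moment identities $\EE[v_i\bar v_j]=\delta_{ij}/d_l$ and $\EE[v_iv_j\bar v_k\bar v_l]=(\delta_{ik}\delta_{jl}+\delta_{il}\delta_{jk})/(d_l(d_l+1))$ one computes $\EE[\langle\phi_j,f\phi_j\rangle]=\operatorname{Tr}(A_l)/d_l=\bar f$ and $\EE[\lvert\langle\phi_j,f\phi_j\rangle\rvert^2]=(\lvert\operatorname{Tr}A_l\rvert^2+\operatorname{Tr}(A_l^2))/(d_l(d_l+1))$, whence
\[
\EE\bigl[\lvert\langle\phi_j,f\phi_j\rangle-\bar f\rvert^2\bigr]\le\frac{\operatorname{Tr}(A_l^2)}{d_l(d_l+1)}\le\frac{\lVert f\rVert_{\cL^2}^2}{4\pi(d_l+1)}.
\]
By exchangeability of the $\phi_j\in V_l$, the level average $S_l:=d_l^{-1}\sum_{\phi_j\in V_l}\lvert\langle\phi_j,f\phi_j\rangle-\bar f\rvert^2\ge 0$ then satisfies $\EE[S_l]\le\lVert f\rVert_{\cL^2}^2/(4\pi(d_l+1))=O(1/l)$; this decay, forced by the growing degeneracy $d_l=2l+1$ of the sphere, is the engine of the proof.

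It remains to upgrade this $\cL^1$-bound into almost sure convergence of the Cesàro average, which I would do softly, without any fourth-moment (Weingarten) computation. With $L(E):=\max\{l:l(l+1)\le E\}$ one has $N(E)=\sum_{l\le L(E)}d_l=(L(E)+1)^2$ and $\sum_{j:E_j\le E}\lvert\langle\phi_j,f\phi_j\rangle-\bar f\rvert^2=\sum_{l\le L(E)}d_lS_l$. Since the summands below are nonnegative and
\[
\EE\Bigl[\sum_{l\ge 0}\frac{d_lS_l}{(l+1)^2}\Bigr]=\sum_{l\ge 0}\frac{d_l\,\EE[S_l]}{(l+1)^2}\le C\sum_{l\ge 0}\frac{2l+1}{(l+1)^3}<\infty,
\]
Tonelli's theorem gives $\sum_{l\ge 0}d_lS_l/(l+1)^2<\infty$ $\mu_{\mathrm{Haar}}$-almost surely, and Kronecker's lemma, applied with the increasing weights $a_L=(L+1)^2\to\infty$, yields $(L+1)^{-2}\sum_{l\le L}d_lS_l\to 0$ almost surely. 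In view of the two displayed identities for $N(E)$ and the numerator, this is exactly the limit claimed as $E\to\infty$.

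The genuinely computational ingredient is the single second-moment identity over the unitary group; everything else is either a symmetry observation --- the exactness $\operatorname{Tr}(A_l)=d_l\bar f$ coming from rotation invariance of $P_l$ --- or soft probability (Tonelli plus Kronecker). I expect the main conceptual obstacle to be recognizing that one does \emph{not} need independence of the $S_l$, higher moments, or smoothness of $f$ to close the argument; the only bookkeeping point requiring care is the multiplicity identity $N(E)=(L(E)+1)^2$, which is what makes the Kronecker step fit.
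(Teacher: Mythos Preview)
The paper does not supply a proof of this theorem; it merely quotes the result from \cite{Zelditch-92} as a simplified special case (multiplication operators instead of general zeroth-order pseudodifferential operators), so there is nothing in the paper to compare your argument against.

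That said, your proof is correct and self-contained. The key steps all check out: rotation invariance of $P_l$ gives $K_l(x,x)\equiv d_l/(4\pi)$ and hence the exact identity $\operatorname{Tr}(A_l)=d_l\bar f$; the Hilbert--Schmidt bound $\operatorname{Tr}(A_l^2)\le \operatorname{Tr}(P_lM_{f^2})=d_l\lVert f\rVert_{\cL^2}^2/(4\pi)$ follows from $\lVert P_lM_fP_l\rVert_{\mathrm{HS}}\le\lVert P_lM_f\rVert_{\mathrm{HS}}$ and cyclicity of the trace; the second-moment formula for Haar-uniform unit vectors in $\CC^{d_l}$ is the standard one and yields the stated variance bound; and the passage from $\EE[S_l]=O(1/l)$ to almost sure Ces\`aro convergence via Tonelli plus Kronecker's lemma is clean, using only nonnegativity of $d_lS_l$ and the exact multiplicity count $N(E)=(L(E)+1)^2$. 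You correctly observe that neither independence of the $S_l$ across levels nor any fourth-moment computation is needed, and that only $f\in\cL^2$ is actually used.

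Compared with Zelditch's original treatment, which is set up in the framework of quantization and symbol calculus to handle general zeroth-order observables, your argument is more elementary and more direct for the multiplication-operator case stated here: the rotation symmetry of $\mathbb{S}^2$ does all the work that the symbol calculus would otherwise do, via the single identity $\operatorname{Tr}(P_lM_fP_l)=d_l\bar f$.
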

It is much harder to find a specific, deterministic eigenbasis for the Laplacian on the sphere with the quantum ergodicity property. A corresponding conjecure and first steps of its proof can be found in \cite{BoechererSS-03}.
Using a different method a deterministic eigenbasis with the quantum ergodicity property was found very recently in \cite{BrooksML-15}.
\par
These examples for the behaviour of Laplace eigenfunctions on the sphere were remerkable because they clarified that one has to be careful with analogies between ergodicity or integrability properties of a classical system and its quantum analogue.
%
%
%
%
%
%
%
%
%
%
%
\section{Vanishing speed for solutions of elliptic PDE}
One can generalize the study of unique continuation properties to solutions of a large class of partial differential operators.
A milestone result is \cite{Carleman-39}.
There unique continuation properties for solutions of a system of first order differential equations with sufficiently regular coefficients on open subsets $\Omega$ of $\RR^2$ are proven. Note that second order partial differential equations can be transformed into a system of first order differential equations, see for instance \cite{Hadamard-03}, page 348.
For this purpose, Carleman introduced a new method which is nowadays called Carleman estimates.
While Carleman's original result applies to the two-dimensional case only,
it has been generalized to arbitrary dimensions in \cite{Mueller-54} and
 by now there are plenty of results concerning Carleman estimates and their applications.
\par
An example of a Carleman estimate, see \cite{KenigRS-86,Kenig-86} and the references therein, is the following:
for all $u \in C_0^\infty(\RR^d)$ and $p, p'$ with $1/p - 1/p' = 2/d$, and all sufficiently large $\lambda > 0$
we have
\begin{equation} \label{eq:CarlemanKRS}
 \lVert \euler^{- \lambda x_d} u \rVert_{\cL^{p'}(\RR^d)} \leq C \lVert e^{- \lambda x_d} \Delta u \rVert_{\cL^p(\RR^d)} ,
\end{equation}
where $x_d$ denotes the $d$-th coordinate of $x$. In fact, Ineq.~\eqref{eq:CarlemanKRS} can be extended
to $\{ u \in \cL^{p'}(\RR^d) \mid \Delta u \in \cL^p(\RR^d)\ \text{and}\ \exists \mu \in \RR:\ \operatorname{supp} u \subset \{ x_d > \mu \} \}$.
\begin{example}[How to conclude UCP from Carleman]
We follow \cite{Kenig-86} and show how the Carleman estimate~\eqref{eq:CarlemanKRS} can be used to obtain a unique continuation property.
Let $V \in \cL^{d/2} (\mathbb{R}^d)$ and, as before,  $1/p - 1/p' = 2/d$. Our goal is to show that if $u \in C_0^\infty (\RR^d)$ satisfies $\lvert \Delta u \rvert \leq \lvert V u \rvert$ and $\operatorname{supp} u \subset \{x_d > 0\}$, then $u \equiv 0$.
\begin{proof}
In a first step, we show that $u$ vanishes on a strip $S_\rho = \{x \in \RR^d \mid x_d \in [0,\rho]\}$, $\rho > 0$.
We choose $\rho > 0$ to be the largest number such that
\[
 C \lVert V \rVert_{\cL^{d/2} (S_\rho + x_d \cdot e_d)} \leq \frac{1}{2}\ \text{for all}\ x_d \in \RR.
\]
where $C$ is the constant from the Carleman estimate~\eqref{eq:CarlemanKRS} and $e_d$ the unit vector in the $d$-th dimension.
Such a $\rho$ exists since $V \in \cL^{d/2}(\RR^d)$.
Now, inequality~\eqref{eq:CarlemanKRS} gives for all $\lambda > 0$
\[
 \bigl\lVert \mathrm{e}^{-\lambda x_d} u \bigr\rVert_{\cL^{p'} (S_\rho)}
 \leq
 C \bigl\lVert \mathrm{e}^{-\lambda x_d} V u \bigr\rVert_{\cL^{p} (S_\rho)}
 +
 C \bigl\lVert \mathrm{e}^{-\lambda x_d} \Delta u \bigr\rVert_{\cL^{p} (\mathbb{R}^d \setminus S_\rho)} .\vspace{1ex}
\]
By H\"older's inequality and our assumption on $\rho$ we obtain
\begin{align*}
 \bigl\lVert \mathrm{e}^{-\lambda x_d} u \bigr\rVert_{\cL^{p'} (S_\rho)}
 &\leq C \lVert V \rVert_{\cL^{d/2} (S_\rho)}  \bigl\lVert \mathrm{e}^{-\lambda x_d}  u \bigr\rVert_{\cL^{p'} (S_\rho)}
 + C \bigl\lVert \mathrm{e}^{-\lambda x_d} \Delta u \bigr\rVert_{\cL^{p} (\mathbb{R}^d \setminus S_\rho)} .
 \end{align*}
 Since $C \lVert V \rVert_{\cL^{d/2} (S_\rho)} \leq 1/2$ we get
\[
 \bigl\lVert \mathrm{e}^{-\lambda x_d} u \bigr\rVert_{\cL^{p'} (S_\rho)}
 \leq 2 C \bigl\lVert \mathrm{e}^{-\lambda x_d} \Delta u \bigr\rVert_{\cL^{p} (\mathbb{R}^d \setminus S_\rho)} .
\]
We use $\mathrm{e}^{-\lambda x_d} \leq \mathrm{e}^{-\lambda \rho}$ for $x_d > \rho$ and obtain
\[
 \forall \lambda > 0 : \quad\bigl\lVert \mathrm{e}^{-\lambda (x_d-\rho)} u \bigr\rVert_{\cL^{p'} (S_\rho)}
 \leq 2 C \bigl\lVert  \Delta u \bigr\rVert_{\cL^{p} (\mathbb{R}^d \setminus S_\rho)} .
\]
Note that the right hand side of the last inequality is independent of $\lambda$ and that $x_d - \rho < 0$ on $S_\rho$.
Hence, if $u \not \equiv 0$ on $S_\rho$, we get a contradiction by choosing $\lambda$ large enough.
By our choice of $\rho$, we can iterate this procedure and find that $u \equiv 0$ on $\RR^d$.
\end{proof}
\end{example}
\begin{remark}[Weight functions]
The above example shows unique continuation on strips since the level sets of the weight function $\euler^{- \lambda x_d}$ are strips.
Hence, it might be tempting to search for radially symmetric weight functions in order to obtain unique continuation on annuli or balls.
Indeed, such weight functions have been used in many situations,  see the discussion in Remark~\ref{rem:BourgainK-05} below.
However, typically one wants the weight function to have nowhere vanishing gradient.
For radially symmetric functions, this poses a problem at the origin, which can be resolved in various ways.
One could exclude the origin from the domain and consider weight functions which are smooth except at the origin
cf.\ e.g.\ Remark~\ref{rem:BourgainK-05}, or use a two-weight Carleman inequality, cf.\ e.g.\ \cite{RodnianskiT-15}.
\end{remark}
\begin{example}[Elliptic differential operators]
Let $H$ be the elliptic partial differential operator
 \[
  H f(x) := \sum_{j,j=1}^d \frac{\partial}{\partial x_i} \left( a_{ij}(x) \frac{\partial}{\partial x_j} f(x) \right) + V(x) f(x),
 \]
acting on $C^2(\Omega)$, where $\Omega \subset \RR^d$ is open and connected, $V : \Omega \to \RR$ is bounded and measurable, the functions $a_{ij}: \Omega \to \RR$, $1 \leq i,j \leq d$, are Lipschitz continuous, $a_{ij} = a_{ji}$, and there is $\lambda > 0$ such that for all $x \in \Omega$ and all $\xi \in \RR^d$
  \[
   \frac{1}{\lambda} \lvert \xi \rvert^2 \leq \sum_{i,j = 1}^d a_{ij}(x) \xi_i \xi_j \leq \lambda \lvert \xi \rvert^2 .
  \]
 By means of Carleman estimates it has been shown that the class $\{f \in C^2(\Omega) \mid H f = 0 \}$ satisfies the strong unique continuation property, see for instance \cite{Wolff-93}, where more general results are discussed.
 One can generalize this result to Sobolev spaces $W^{2,2}(\Omega)$ or $W^{2,p}(\Omega)$, $p > 1$.
\end{example}
Next we supply an example which shows that the two properties from Definition~\ref{def:ucp} are actually distinct.
\begin{example}[Functions satisfying UCP, but not SUCP]
 Let $d \in \{ 3,4 \}$, $\Omega \subset \RR^d$ be open, $a_{i,j} : \Omega \to \RR$ Lipschitz for $i,j = 1, ..., d$, $A \in \cL^{d/2}_{\mathrm{loc}}(\Omega)$ and $B \in \cL^{d}_{\mathrm{loc}}(\Omega)$.
 Then solutions $u \in W^{2,2}_{\mathrm{loc}}(\Omega)$ of the differential inequality
 \begin{equation}
 \label{eq:differential_ineq}
  \Biggl\lvert \sum_{i,j = 1}^d a_{ij} \frac{\partial^2 u}{\partial x_i \partial x_j} \Biggr\rvert \leq A \lvert u \rvert + B \rvert \nabla u \rvert
 \end{equation}
satisfy the unique continuation property, but not necessarily the strong unique continuation property, see \cite{Wolff-93} and the references therein.
 Note that for $A,B \geq 0$, the set of solutions of the differential inequality \eqref{eq:differential_ineq} contains in particular solutions of the differential equation
 \[
  \sum_{i,j = 1}^d a_{ij} \frac{\partial^2 u}{\partial x_i \partial x_j} = A u + B \nabla u.
 \]
For other examples of this type see \cite{JerisonK-85,Kenig-86}.
\end{example}
\subsection{A result of Donnelly and Fefferman: Eigenfunctions of the Laplacian}
We now consider a $d$-dimensional, connected, compact manifold $M$ with a smooth (that is $C^\infty$) Riemannian metric.
The compactness will replace the condition of controlled growth at infinity of functions we have discussed in the context
of Hadamard's three circle theorem. We want to study differentiable functions on $M$.
\begin{example}
A prominent example of such a manifold $M$ is the $d$-dimensional torus $\TT^d := \RR^d / \ZZ^d$. Note that
 \[
  C^k(\TT^d) \cong C_{\mathrm{per}}^k(\RR^d) =
  \left\{
    u \in C^k(\RR^d)\mid u(x+k) = u(x)\ \text{for all}\ x \in \RR^d, k \in \ZZ^d
  \right\}.
 \]
 In particular, we can learn about periodic problems in Euclidean space by studying this example.
\end{example}
The following theorem quantifies the vanishing speed of solutions of the differential equation $-\Delta u = E u$, $E > 0$, where $\Delta$ denotes the Laplace-Beltrami operator on the manifold $M$. Here, the vanishing speed is quantified in $\cL^\infty$-norm. It can be found in Proposition~4.1 of \cite{DonnellyF-88}.
\begin{theorem}
\label{thm:DonnellyF1}
There are constants $C_1, C_2 \geq 0$, depending only on $d$, the diameter of $M$ and the maximum over all sectional curvatures on $M$ such that
for every $E > 0$, every $u : M \to \RR$, $0 \not\equiv u$ with $- \Delta u = E u$ on $M$,
and every $x_0 \in M$, $u$
can vanish at most of order $C_1 + C_2 \sqrt{E}$ with respect to the $\infty$-norm.

More precisely, for every $u \not\equiv 0$ with $- \Delta u = E u$ and every $x_0 \in M$ there is $\epsilon_0 > 0$ such that for every $\epsilon < \epsilon_0$, we have
\begin{equation}
\label{eq:DF}
 \epsilon^{C_1 + C_2 \sqrt{E}}
 \leq
\max_{x \in B(x_0, \epsilon)} \lvert u(x) \rvert
\end{equation}
and consequently
\[
 \lim_{\epsilon \to 0} \epsilon^{- \delta - (C_1 + C_2 \sqrt{E})} \max_{x \in B(x_0, \epsilon)} \lvert u(x) \rvert = \infty \text{  for all } \delta > 0.
\]
\end{theorem}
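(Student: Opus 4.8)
The plan is to reduce \eqref{eq:DF} to a quantitative \emph{doubling inequality} for eigenfunctions and then to iterate it. Concretely, suppose we can show that there is a scale $r_0>0$ depending only on the admissible geometric data and a constant $\Lambda = \euler^{C_1 + C_2\sqrt E}$ with
\[
 \max_{B(x_0, 2r)} \lvert u \rvert \leq \Lambda \, \max_{B(x_0, r)} \lvert u \rvert \qquad \text{for all } r < r_0 .
\]
Applying this with $r = 2^{-k}r_0$ and telescoping gives $\max_{B(x_0,2^{-k}r_0)}\lvert u\rvert \geq \Lambda^{-k}\max_{B(x_0,r_0)}\lvert u\rvert$; writing $\epsilon = 2^{-k}r_0$ this is a polynomial lower bound $\max_{B(x_0,\epsilon)}\lvert u\rvert \geq c(u,x_0)\,\epsilon^{(C_1+C_2\sqrt E)/\log 2}$, and absorbing $c(u,x_0)$ and the harmless factor $1/\log 2$ into $\epsilon_0 = \epsilon_0(u,x_0)$ together with a slight enlargement of the constants yields \eqref{eq:DF}. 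So the whole point is the doubling inequality with the right $E$-dependence.

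To prove it, the key trick is to linearise away the eigenvalue by lifting to the product manifold $\widetilde M := M \times \RR$ with the product metric and setting $w(x,t) := u(x)\,\cosh(\sqrt E\,t)$. Then $\Delta_x w = -Ew$ and $\partial_t^2 w = Ew$, so $w$ is \emph{harmonic} on $\widetilde M$; since $\cosh 0 = 1$, the order of vanishing and the local doubling behaviour of $u$ at $x_0\in M$ are controlled by those of $w$ at $(x_0,0)\in\widetilde M$. For the harmonic function $w$ one introduces Almgren's frequency function
\[
 N(\rho) := \frac{\rho \int_{B(\rho)} \lvert \nabla w\rvert^2\,\drm \mathrm{vol}}{\int_{\partial B(\rho)} w^2\,\drm\sigma},
\]
with geodesic balls in $\widetilde M$ centred at $(x_0,0)$. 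Two facts are needed: (i) $\rho\mapsto \euler^{\alpha\rho}N(\rho)$ is non-decreasing for $\rho$ below a geometric scale, with $\alpha$ depending only on $d$ and the upper curvature bound; and (ii) if $N(\rho)$ is bounded below by a fixed constant then the local doubling index at scale $\rho$ is $\lesssim N(\rho)$.

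It therefore suffices to bound $N$ at one fixed scale $R$ comparable to $\operatorname{diam}(M)$. For $R$ of that size, $B(R)\subset\widetilde M$ is essentially $M\times(-R,R)$, and using $\int_M\lvert\nabla u\rvert^2 = E\lVert u\rVert_{\cL^2(M)}^2$ one gets
\[
 \int_{B(R)}\lvert\nabla w\rvert^2 = \sqrt E\,\lVert u\rVert_{\cL^2(M)}^2\,\sinh(2\sqrt E R)\,(1+o(1)), \qquad \int_{\partial B(R)}w^2 = \tfrac12\lVert u\rVert_{\cL^2(M)}^2\,\euler^{2\sqrt E R}\,(1+o(1)),
\]
so that $N(R) = R\sqrt E\,(1+o(1))$ — the exponentials $\euler^{\pm\sqrt E t}$ cancel between numerator and denominator, which is exactly why one obtains $\sqrt E$ and not $E$. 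Combined with (i) this gives $N(\rho)\le \euler^{\alpha R}(R\sqrt E + C_1')$ for all $\rho<R$, hence by (ii) the doubling inequality, and Step~1 finishes the proof.

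The main obstacle is the curvature-corrected monotonicity (i): one must differentiate $N(\rho)$ (a Rellich/Pohozaev computation) and control all error terms by Jacobi-field and Hessian-of-distance estimates uniformly over $M$, so that $\alpha$, $R$ and ultimately $C_1,C_2$ depend only on $d$, $\operatorname{diam}(M)$ and the sectional curvature bound, as claimed. An alternative, closer in spirit to the Carleman estimates appearing later in this survey, is to replace (i)--(ii) by a three-ball inequality for $w$ on $\widetilde M$ obtained from a Carleman estimate with a radial weight singular at $(x_0,0)$; the eigenvalue again enters only through the $\cosh$-lift, and iterating the three-ball inequality over dyadic scales produces the same doubling bound.
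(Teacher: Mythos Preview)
The paper does not give its own proof of this theorem; it simply quotes the result as Proposition~4.1 of \cite{DonnellyF-88}. So there is no proof in the paper to compare against.

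That said, your sketch is a correct and by now standard route to the Donnelly--Fefferman vanishing-order bound. The harmonic lift $w(x,t)=u(x)\cosh(\sqrt{E}\,t)$ on $M\times\RR$, Almgren's frequency $N(\rho)$ with its curvature-corrected monotonicity, the computation $N(R)\sim R\sqrt{E}$ at a scale $R\gtrsim\operatorname{diam}(M)$, and the passage from a frequency bound to a doubling inequality and then to the vanishing-order estimate are all legitimate and give the right $C_1+C_2\sqrt{E}$ dependence. Your heuristic ``$B(R)\approx M\times(-R,R)$'' and the boundary term ``$\partial B(R)\approx$ two copies of $M$'' are fine for identifying the leading behaviour, though in a full proof one must either work with genuine metric balls in the product (which are not product sets) or, more cleanly, replace the round frequency by a cylindrical version and verify monotonicity for that. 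The main technical work you correctly flag---making the monotonicity of $N$ hold with constants depending only on $d$, $\operatorname{diam}(M)$, and the curvature bound---is where all the geometry enters; this is the Garofalo--Lin argument adapted to the product manifold.

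Historically, Donnelly and Fefferman's original proof proceeds via Carleman inequalities rather than the frequency function, closer to the alternative you mention at the end. Both approaches are well documented in the literature and yield the same quantitative conclusion.
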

The balls are to be taken with respect to the geodesic distance on $M$.
\begin{remark}
 \begin{enumerate}[(i)]
  \item Even though we did not make any regularity assumption on $u$, by elliptic regularity theory, see for example
  \cite[Chapter 6.3]{Evans-98},  we know that any $u$ that solves the eigenvalue equation is in fact in $C^\infty(M)$.
  \item Vanishing with respect to the $\cL^\infty$-norm is a stronger statement than vanishing with respect to the $\cL^1$-norm as we have it in the definition of the strong unique continuation property.
  In fact, let $u$ vanish of order $C_1 + C_2 \sqrt{E}$ with respect to the $\cL^\infty$-norm. Then we have
  \[
   \int_{B(\epsilon)} \lvert u(x) \rvert \mathrm{d}x \leq \mathrm{Vol}(B(1)) \cdot \epsilon^d \max_{x \in B(\epsilon)} \lvert u(x) \rvert \leq \mathrm{Vol}(B(1)) \cdot \epsilon^{d + C_1 + C_2 \sqrt{E}}.
  \]
  However, for the property of {\em not vanishing} with respect to some order, the converse implication holds,
  so that Theorem \ref{thm:DonnellyF1} is a weaker statement than one about non-vanishing with respect to the $\cL^1$-norm.
\end{enumerate}
\end{remark}
Since $C_1$ and $C_2$ are not explicitly known, this theorem is most interesting for large $E$.
Inequality \eqref{eq:DF} controls some kind of local variation of $u$.
The higher $E$, the larger the variation of $u$ around a point $x_0$, cf. Example \ref{ex:trigonometric}.
It is natural to ask  whether one can complement inequality \eqref{eq:DF} by an upper bound.
This has been studied in \cite{DonnellyF-88} as well.
\begin{definition}
 Let $u \in C(M)$ be real-valued. The nodal set of $u$ is $N_u := \{ x \in M \mid u(x) = 0 \}$.
 We denote by $\mathcal{H}^{d-1}$ the $(d-1)$-dimensional Hausdorff measure on $M$.
\end{definition}
Recall that eigenfunctions of the Laplacian on an analytic manifold are analytic, see e.g. \cite[Theorem 7.5.1]{Hoermander-69}. By the theory of analytic sets, the nodal sets $N_u$ of such function have a well-defined Hausdorff measure $\mathcal{H}^{d-1}(N_u)$. The following theorem is due to \cite[Theorem 1.2]{DonnellyF-88}.
\begin{theorem}
\label{thm:DonnellyF2}
 Let $M$ be a compact, real-analytic, connected manifold (with real-analytic metric).
 Then, there exist $C_3$, $C_4$, depending on $M$, such that for every $u:M \to \RR$, $0 \not \equiv u$ and every $E \geq 0$ with $- \Delta u = E u$, we have
 \begin{equation}
 \label{eq:DF2}
  C_3 \sqrt{E} \leq \mathcal{H}^{d-1}(N_u) \leq C_4 \sqrt{E}.
 \end{equation}
\end{theorem}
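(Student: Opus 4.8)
The plan is to prove the two inequalities in \eqref{eq:DF2} by entirely different techniques: the lower bound by a covering argument at the wavelength scale $E^{-1/2}$, and the upper bound by complexifying $u$ and counting zeros with an integral-geometric (Crofton) formula. Throughout, $c,C,\dots$ denote positive constants depending only on $M$, and after rescaling we may assume $\sup_M\lvert u\rvert = 1$. The first ingredient for the lower bound is a \emph{doubling estimate}: there is $C$ such that for every $p\in M$ and all sufficiently small $r$,
\[
 \sup_{B(p,2r)}\lvert u\rvert \ \le\ \euler^{C\sqrt E}\,\sup_{B(p,r)}\lvert u\rvert .
\]
This follows from the monotonicity of Almgren's frequency function for $-\Delta u = Eu$ — the same Carleman/frequency machinery underlying Theorem~\ref{thm:DonnellyF1} — with the $\sqrt E$-dependence made explicit. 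From doubling one extracts a quantitative Bernstein/Remez-type inequality: on any ball $B(p,r)$ with $r\le c\,E^{-1/2}$, each of $\{u>0\}\cap B(p,2r)$ and $\{u<0\}\cap B(p,2r)$ is either empty or occupies a fixed fraction of $\mathrm{Vol}(B(p,2r))$.

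To finish the lower bound, note that $u$ must vanish in every ball of radius $\ge c_0 E^{-1/2}$: if $u$ had one sign on $B(p,r)$, then by Barta's inequality the first Dirichlet eigenvalue of $B(p,r)$, which is $\asymp r^{-2}$, would be $\ge E$, forcing $r\le C E^{-1/2}$. Choose a maximal $c_0 E^{-1/2}$-separated set $\{p_i\}$ in $M$; there are $\gtrsim E^{d/2}$ of them, the balls $B(p_i,c_0 E^{-1/2}/2)$ are pairwise disjoint, and on each of them $N_u$ separates two sets whose volumes (inside the doubled ball) are comparable and nontrivial by the Remez estimate. A relative isoperimetric inequality on $B(p_i,c_0 E^{-1/2})$ then gives $\mathcal{H}^{d-1}\bigl(N_u\cap B(p_i,c_0 E^{-1/2})\bigr)\gtrsim (E^{-1/2})^{d-1}$. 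Summing over a bounded-overlap subfamily yields $\mathcal{H}^{d-1}(N_u)\gtrsim E^{d/2}\cdot E^{-(d-1)/2}=\sqrt E$, which is the asserted lower bound with some $C_3$.

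For the upper bound I would use real-analyticity crucially. Embed $M$ into a complexification $M_\CC$ and fix, \emph{uniformly in $E$}, a Grauert tube $M_\rho\subset M_\CC$ of width $\rho$ to which analytic functions on $M$ with suitable bounds extend holomorphically. The key step is a \emph{complex Bernstein estimate}: the holomorphic extension $\tilde u$ of $u$ satisfies $\sup_{M_\rho}\lvert\tilde u\rvert\le \euler^{C\sqrt E}$, reflecting that an eigenfunction of eigenvalue $E$ behaves on the analytic scale like a polynomial of degree $\asymp\sqrt E$; this is proved by analytic elliptic regularity for $-\Delta u = Eu$, tracking the $E$-dependence of the Cauchy-type estimates on nested tubes. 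Cover $M$ by finitely many analytic coordinate charts, the number independent of $E$. In each chart restrict $\tilde u$ to a complex line; away from the (necessarily thin) set of lines on which $\tilde u$ vanishes identically, Jensen's formula combined with the bound $\euler^{C\sqrt E}$ and a lower bound for $\sup\lvert\tilde u\rvert$ on a fixed subdisk shows that this one-variable holomorphic function has at most $C\sqrt E$ zeros in a fixed subdisk; in particular $u$ has at most $C\sqrt E$ zeros on any real line segment in the chart. Finally the Crofton formula writes $\mathcal{H}^{d-1}(N_u\cap\text{chart})$ as a constant times the average over all lines $\ell$ of $\#(N_u\cap\ell)$, hence $\le C\sqrt E$; summing over the finitely many charts gives $\mathcal{H}^{d-1}(N_u)\le C_4\sqrt E$.

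I expect two main obstacles. On the lower-bound side it is turning the mere sign change on a wavelength ball into a genuine lower bound for $\mathcal{H}^{d-1}(N_u)$ there — i.e.\ combining the frequency-function doubling estimate, the Remez-type volume bound, and relative isoperimetry, all uniformly in $p$ and $E$. On the upper-bound side the crux is the complex Bernstein estimate $\sup_{M_\rho}\lvert\tilde u\rvert\le\euler^{C\sqrt E}$ with a tube width $\rho$ independent of $E$; once this is in hand, the passage from Jensen's bound on zeros to the Crofton bound on $\mathcal{H}^{d-1}$ is comparatively soft. This is precisely the point where real-analyticity of $M$ \emph{and} its metric is indispensable, which explains why the upper bound, unlike the lower bound, is not available in the merely smooth category at this level of generality.
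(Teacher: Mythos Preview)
The paper does not prove this theorem; it is stated with a citation to \cite[Theorem~1.2]{DonnellyF-88} and then illustrated by Examples~\ref{ex:trigonometric} and~\ref{ex:harmonics2}. There is nothing in the paper itself to compare your argument against.

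Your upper-bound outline is essentially Donnelly--Fefferman's own argument: holomorphic extension to a fixed Grauert tube, the complex growth bound $\sup_{M_\rho}\lvert\tilde u\rvert\le\euler^{C\sqrt E}$, Jensen's formula on complex lines, and integral geometry via Crofton. You have correctly located the one hard analytic input (the Bernstein estimate with tube width $\rho$ independent of $E$) and the role of real-analyticity there. One point to add: Jensen's formula also needs a \emph{lower} bound for $\lvert\tilde u\rvert$ somewhere on each complex disc, and this again comes from the doubling machinery, transplanted to the complexification.

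The lower bound is where your plan has a genuine gap. The doubling exponent on a wavelength ball is bounded only by $C\sqrt E$, not by an absolute constant, so the Remez-type inequality it yields does \emph{not} force $\{u>0\}$ and $\{u<0\}$ each to occupy a ``fixed fraction'' of the ball uniformly in $E$; with an $E$-dependent fraction the isoperimetric step loses an exponential factor and the argument as written does not close. You rightly flag this as an obstacle, but the ingredients you list (doubling, Remez, relative isoperimetry) are not sufficient as stated: in the merely smooth category this lower bound is exactly Yau's conjecture, settled only by Logunov (2018) with a substantially more intricate frequency-function argument, and Donnelly--Fefferman's own proof in the real-analytic case also goes beyond your sketch. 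So the upper-bound plan is sound, but for the lower bound you should expect to follow the original analytic argument rather than the heuristic you describe.
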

\begin{example}[Vanishing speed and nodal sets of trigonometric functions]
\label{ex:trigonometric}
Let $M = \TT^1 = \mathbb{S}^1 \cong [0,1)$.
The eigenfunctions of the Laplace operator on $(0,1)$ with periodic boundary conditions are $\sin$ and $\cos$ waves. For simplicity we only study the eigenfunctions $u_n (x) = \sin (2 \pi n x)$ with corresponding eigenvalue $E_n = (2 \pi)^2 n^2$ and their vanishing speed at the point $x = 0$. For $\epsilon$ small, we have
\[
2 \pi n \epsilon \geq \sup_{x \in B(\epsilon)} \lvert u_n(x) \rvert \geq \frac{2 \pi n \epsilon}{2},
\]
thus in particular, since $\epsilon$ is small, $\sup_{x \in B(\epsilon)} \lvert u_n(x) \rvert \geq \epsilon^{2 \pi n} = \epsilon^{\sqrt{E_n}}$ whence inequality \eqref{eq:DF} holds with $C_1 = 0$ and $C_2 = 1$.
Furthermore, the $0$-dimensional Hausdorff measure of the zero set $N_{u_n}$ is the number of zeros of $u_n$ and we have $\mathcal{H}^0(N_{u_n}) = 2n$.
Thus, inequality \eqref{eq:DF2} holds with $C_3 = C_4 = 1/\pi$.
\end{example}
\begin{example}[Vanishing speed and nodal sets of spherical harmonics on $\mathbb{S}^2$] \label{ex:harmonics2}
We consider the real part of the spherical harmonics $Y_{l,l}$, $l \in \NN$, from Example~\ref{ex:spherical-harmonics}, i.e.
\[
 \Re Y_{l,l}  = \Re \bigl( c_l \cos (\theta)^l \exp (\mathrm{i} l \phi) \bigr)
          =  c_l \cos (\theta)^l  \cos (l\phi),
\]
where $\theta \in [-\pi / 2 , \pi / 2]$ and $\phi \in [0,2\pi)$. Recall that $-\Delta \Re Y_{l,l} = E_l \Re Y_{l,l}$ where $E_l = l (l+1)$. The function $\Re Y_{l,l}$ exhibits the highest order of vanishing at the poles $\theta =  \pm \pi / 2$ where its maximum behaves as $\lvert \pm \pi/2 - \theta \rvert^l$. Thus we have for all $x_0 \in \mathbb{S}^2$ and $\epsilon > 0$ sufficiently small
\[
 \max_{x \in B(x_0, \epsilon)} \Re Y_{l,l}(x) \geq \epsilon^{l} \geq \epsilon^{\sqrt{E_l}},
\]
where $B(x_0, \epsilon)$ denotes the ball with center $x_0$ and radius $\epsilon$ with respect to the geodesic distance. Thus Ineq.~\eqref{eq:DF} of Theorem~\ref{thm:DonnellyF1} holds with $C_1 = 0$ and $C_2 = 1$.
\par
Concerning Theorem~\ref{thm:DonnellyF2}, we note that the nodal set of $\Re Y_{l,l}$ consists of exactly $l$ meridians. It is of Hausdorff measure $\mathcal{H}^{d-1} (N_{\Re Y_{l,l}}) = 2\pi l$. Hence, Ineq.~\eqref{eq:DF2} of Theorem~\ref{thm:DonnellyF2} is satisfied with $C_3 = \pi$ and $C_4 = 2\pi$.
\end{example}
\subsection{A result of Kukavica: Eigenfunctions of Schr\"odinger operators}
Instead of eigenfunctions of the Laplacian, one can study solutions of the stationary Sch\"odinger equation $\Delta u = V u$ on a manifold $M$. In this setting the question arises how the vanishing order depends on properties of the potential $V$.
\par
Next we cite Theorem~5.2 of \cite{Kukavica-98}, which is a generalization of Theorem \ref{thm:DonnellyF1}.
\begin{theorem}
 \label{thm:Kukavica}
 Let $M$ be a compact, connected, smooth manifold of dimension $d$, let $V\in \cL^\infty(M)$ and $0 \not\equiv u \in W^{2,2}(M)$ with $\Delta u = V u$.
 Then there is a constant $C > 0$, depending only on $M$, such that $u$ can vanish at most of order
 \[
 C (1 + \lVert V \rVert_\infty^{1/2} + (\operatorname{osc} V)^2),
 \]
 where $\operatorname{osc}(V) := \sup V - \inf V$.
\par
 More precisely, for every $x_0 \in M$ and every $\epsilon > 0$ sufficiently small, we have
\begin{equation}
\label{eq:Kukavica}
 \epsilon^{C (1 + \lVert V \rVert_\infty^{1/2} + (\operatorname{osc} V)^2)}
 \leq
\max_{x \in B(x_0, \epsilon)} \lvert u(x) \rvert .
\end{equation}
\end{theorem}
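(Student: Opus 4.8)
The plan is to reduce the statement to a quantitative unique continuation estimate obtained from a Carleman inequality with a suitable weight, following the strategy pioneered by Donnelly and Fefferman and refined by Kukavica. First I would localize: fix $x_0 \in M$ and work in a geodesic normal coordinate chart centered at $x_0$ of some fixed radius $r_0$ depending only on the geometry of $M$ (injectivity radius, curvature bounds). In these coordinates $\Delta$ becomes a second-order elliptic operator $\sum_{i,j} \partial_i(a_{ij}\partial_j \cdot) + (\text{lower order})$ with Lipschitz coefficients $a_{ij}$ comparable to the identity, the comparability constants depending only on $M$. The equation $\Delta u = Vu$ thus becomes a differential inequality $|Lu| \leq \|V\|_\infty |u|$ for this frozen elliptic operator $L$, up to lower-order commutator terms that are likewise controlled by the geometry. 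So the problem is now a local one in a fixed Euclidean ball.

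Next I would invoke a Carleman estimate with a radial weight of the form $e^{\tau \psi(|x|)}$, where $\psi$ is a convex function of $\log|x|$ — for instance $\psi(s) = -\log s + c(\log s)^2$ or a similar choice whose gradient does not vanish (this is exactly the technical point flagged in the Remark on weight functions: one excludes the origin and uses a weight smooth away from $0$). The Carleman inequality has the schematic form
\begin{equation*}
\tau^3 \int e^{2\tau\psi}|x|^{-\text{power}}|w|^2\,\drm x + \tau\int e^{2\tau\psi}|\nabla w|^2\,\drm x \leq C\int e^{2\tau\psi}|Lw|^2\,\drm x
\end{equation*}
valid for all $w \in C_0^\infty(B(r_0)\setminus\{0\})$ and all $\tau \geq \tau_0$, with $C,\tau_0$ depending only on the ellipticity constants. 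I would apply this to $w = \chi u$ for a cutoff $\chi$ supported in $B(r_0)$ and equal to $1$ on $B(r_0/2)$. The term $Lw$ splits into $\chi\cdot Lu$, which is absorbed on the left via $|Lu|\leq \|V\|_\infty|u|$ once $\tau^3$ beats $C\|V\|_\infty^2$ — this is where the $\|V\|_\infty^{1/2}$ contribution to the vanishing order enters — plus commutator terms $[L,\chi]u$ supported in the annulus $r_0/2 \leq |x| \leq r_0$, which are harmless because there $\psi$ is bounded.

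The final step is the standard three-region (or three-ball) argument: from the Carleman inequality one extracts an interpolation inequality of the form $\|u\|_{L^2(B(2\epsilon))} \leq \|u\|_{L^2(B(\epsilon))}^{\alpha}\,\|u\|_{L^2(B(r_0))}^{1-\alpha}$ with an explicit exponent $\alpha = \alpha(\epsilon)$, by optimizing over $\tau$; iterating this (doubling $\epsilon$ until one reaches the fixed scale $r_0$) yields a lower bound $\|u\|_{L^2(B(x_0,\epsilon))} \geq c\,\epsilon^{N}\|u\|_{L^2(B(x_0,r_0))}$ with $N = C(1 + \|V\|_\infty^{1/2} + \text{(something)})$, and then elliptic regularity upgrades the $\cL^2$-lower bound to the $\cL^\infty$-lower bound claimed in \eqref{eq:Kukavica}. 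I expect the main obstacle to be the appearance of the oscillation term $(\operatorname{osc} V)^2$ rather than $\|V\|_\infty$: this improvement comes from the observation that one may replace $V$ by $V - \inf V$ by substituting $u = e^{\text{(something)}}\tilde u$ or, more precisely, by conjugating the equation and using the freedom to add a constant to $V$, so that only the oscillation, not the full sup-norm, drives the worst term in the Carleman balance; making this substitution compatible with the weight and carefully tracking how the constant shift interacts with the iteration is the delicate part. The other, more routine, technical nuisance is handling the singularity of the weight at the origin and ensuring the constants genuinely depend only on $M$ and not on $\epsilon$ or $u$.
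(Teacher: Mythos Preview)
The paper does not actually prove this theorem: it is quoted verbatim as Theorem~5.2 of \cite{Kukavica-98}, with only the remark that the manifold statement can be reduced, via the exponential map, to a statement about elliptic operators on bounded Euclidean domains. So there is no ``paper's own proof'' to compare against; the relevant comparison is with Kukavica's original argument.

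That said, your sketch diverges from Kukavica's method in a substantive way. Kukavica does not use Carleman estimates at all; his proof is based on an Almgren-type frequency function
\[
N(r)\;\approx\;\frac{r\int_{B_r}\bigl(|\nabla u|^2 + Vu^2\bigr)}{\int_{\partial B_r} u^2},
\]
and the core of the argument is a near-monotonicity estimate for $N(r)$. Differentiating $N$ produces terms involving $\partial_r V$ (or, after integration, the variation of $V$ over the ball), and this is precisely where $\operatorname{osc} V$, rather than $\lVert V\rVert_\infty$, enters. The $\lVert V\rVert_\infty^{1/2}$ contribution comes separately, from the global doubling estimate on the compact manifold (this is the step that parallels Donnelly--Fefferman). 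Your Carleman-plus-three-balls scheme is a legitimate alternative route to \emph{some} vanishing-order bound, but it does not by itself explain the specific dependence $C(1+\lVert V\rVert_\infty^{1/2}+(\operatorname{osc} V)^2)$.

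The concrete gap in your proposal is the handling of the oscillation term. Your suggestion---``replace $V$ by $V-\inf V$ by conjugating the equation''---does not work as written: the equation $\Delta u = Vu$ is not invariant under $V\mapsto V-c$, and there is no multiplicative substitution $u=e^{\phi}\tilde u$ on $M$ that effects such a shift without introducing a first-order drift term whose size is again governed by $\lVert V\rVert_\infty$. The standard device for shifting the potential by a constant is to pass to one extra dimension (set $\tilde u(x,t)=u(x)\,\funs(t)$ with $\funs''=c\,\funs$, so that $\Delta_{d+1}\tilde u=(V+c)\tilde u$), but then one must redo the entire frequency/Carleman analysis in $d+1$ variables and check that the added variable does not spoil the constants. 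In Kukavica's frequency-function framework the oscillation appears more organically, without this detour. If you want to push through a Carleman proof, the honest obstacle is exactly the one you flagged: isolating $\operatorname{osc} V$ from $\lVert V\rVert_\infty$ requires an additional idea beyond the standard absorb-and-iterate scheme you outlined.
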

In the case $V \equiv E$, this theorem reduces to Theorem \ref{thm:DonnellyF1}.
If we choose $V = E \cdot \chi_W$ where $\chi_W$ is the characteristic function of a open, non-empty, proper subset $W \subset M$
and $E$ a coupling constant, then the exponent in \eqref{eq:Kukavica} becomes $C( 1 + \sqrt{E} + E^2)$,
that is quadratic in $E$. Later, we will see similar statements with the better exponent $C( 1 + E^{2/3})$.
\par
Theorems \ref{thm:DonnellyF1}, \ref{thm:DonnellyF2}, and  \ref{thm:Kukavica}
apply to the  $d$-dimensional torus $\TT^d$.
This fits nicely to some partial differential equations in Euclidean space.
If one considers a cube $\Lambda \subset \RR^d$ as a domain and imposes periodic boundary conditions
on the solutions of the partial differential equation, then a problem on a torus results.
We will come back to discuss this situation in Sections
\ref{ss:eigenfunctions} and  \ref{ss:linear-combinations}.
\par
Theorem \ref{thm:Kukavica} can be reduced (by use of the exponential map) to a statement about elliptic operators on bounded domains $\Omega \subset \RR^d$, see \cite{Kukavica-98} for details.
\begin{theorem}
 Let $\Omega \subset \RR^d$ be open, connected and with $C^{1,1}$ boundary.
 Let $0 \not\equiv u \in W^{2,2}(\Omega)$ satisfy
 \[
  - \sum_{i,j = 1}^d \partial_i (a_{ij} \partial_j u) + V u = 0
 \]
where $V \in \cL^\infty(\Omega)$ and the $a_{ij} \in C^{0,1}(\overline \Omega)$, $i,j = 1, ..., d$
are uniformly Lipschitz continuous functions with $a_{ij} = a_{ji}$, satisfying the following ellipticity condition: there is $\lambda > 0$ such that for all $\xi \in \RR^d$ and all $x \in \Omega$ we have
\[
 \lvert \xi \rvert^2 / \lambda \leq \sum_{i,j = 1}^d a_{ij} (x) \xi_i \xi_j .
\]
Furthermore, we assume $\lvert a_{ij}(x) \rvert \leq \lambda$ and $\lvert \partial_k a_{ij}(x) \rvert \leq \lambda$ for almost all $x \in \overline \Omega$ and that $a_{ij}|_{\partial \Omega} \in C^{1,1}(\partial \Omega)$, $i,j,k = 1, ..., d$.
\par
Then there is a constant $C$, depending only on $d$, $\lambda$, the $C^{1,1}$-character of $\partial \Omega$
and the $C^{1,1}$-character of $a_{i,j}|_{\partial \Omega}$ such that at every $x_0 \in \Omega$, $u$ can vanish at most of order
 order $C (1 + \lVert V \rVert_\infty^{1/2} + (\operatorname{osc} V)^2)$.
\end{theorem}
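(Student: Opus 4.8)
The plan is to prove this by the standard three–step route for quantitative unique continuation: (i) localize near $x_0$ so that the operator becomes a small Lipschitz perturbation of the Laplacian; (ii) trade the \emph{size} of the potential for controlled growth in an auxiliary variable, leaving behind only a potential of size $\operatorname{osc} V$; (iii) run a Carleman estimate (equivalently, the monotonicity of Almgren's frequency function) down to scale $0$ for the perturbed problem and read the maximal order of vanishing off the resulting doubling constant. For the localization: fix $x_0$, choose $R_0$ controlled by $d$, $\lambda$ and the $C^{1,1}$–data, and if $B(x_0,R_0)\subset\Omega$, apply an affine change of variables so that $x_0=0$ and $(a_{ij}(0))=\mathrm{Id}$; the bound $\lvert\partial_k a_{ij}\rvert\le\lambda$ then gives $\lvert a_{ij}(x)-\delta_{ij}\rvert\le\lambda\lvert x\rvert$ on a ball $B(R)$, $R=R(d,\lambda)$, so that $L:=\sum_{i,j}\partial_i(a_{ij}\partial_j\,\cdot\,)$ is a small divergence–form perturbation of $\Delta$ there. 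If $x_0$ lies near $\partial\Omega$ I would first straighten the boundary by a $C^{1,1}$–diffeomorphism: the $C^{1,1}$–character of $\partial\Omega$ and of $a_{ij}|_{\partial\Omega}$ guarantees that the transformed equation is again admissible on a half–ball, and one uses the corresponding boundary Carleman estimates (or extends $u$ across the flattened boundary by a reflection adapted to the $C^{1,1}$ coefficients). In all cases it suffices to bound the order of vanishing at the origin of a $W^{2,2}(B(R))$ solution of $Lu=Vu$.

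Next I would separate bulk from oscillation. Write $V=\mu+W$ with $\mu:=\tfrac12(\sup V+\inf V)$, so $\lvert\mu\rvert\le\lVert V\rVert_\infty$ and $\lVert W\rVert_\infty\le\tfrac12\operatorname{osc} V$. The constant $\mu$ should not be absorbed like a generic bounded potential but treated as a spectral parameter, exactly as $E$ is treated in Theorem~\ref{thm:DonnellyF1}. Concretely, let $c$ solve $c''=-\mu c$, $c(0)=1$, $c'(0)=0$, i.e.\ $c(t)=\cos(\sqrt{\mu}\,t)$ for $\mu\ge0$ and $c(t)=\cosh(\sqrt{-\mu}\,t)$ for $\mu<0$, and on $B(R)\times(-1,1)$ put $\tilde u(x,t):=u(x)\,c(t)$. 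A one–line computation gives $(L+\partial_t^2)\tilde u=\bigl((\mu+W)-\mu\bigr)\tilde u=\tilde W\tilde u$ with $\tilde W(x,t):=W(x)$, so the lifted $(d+1)$–dimensional elliptic problem carries a \emph{bounded} potential with $\lVert\tilde W\rVert_\infty\le\tfrac12\operatorname{osc} V$; the size of $V$ has been converted into the a priori bound $\sup_{\lvert t\rvert\le1}\lvert c(t)\rvert\le\euler^{\sqrt{\lVert V\rVert_\infty}}$. Since $c(0)=1\ne0$, the function $\tilde u$ vanishes at $(0,0)$ precisely to the order that $u$ vanishes at $0$, and comparing $\cL^2$–norms of $\tilde u$ over $(d+1)$–balls with those of $u$ over $d$–balls (the comparison constants involving only $\euler^{\sqrt{\lVert V\rVert_\infty}}$) shows that the doubling behaviour of $u$ is controlled by that of $\tilde u$ up to an additive $C\sqrt{\lVert V\rVert_\infty}$ in the exponent.

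Then I would apply to $\tilde u$ a Carleman estimate for $L+\partial_t^2$ with a radially symmetric, logarithmically convex weight singular at the origin (of type $\euler^{\tau\varphi(\lvert(x,t)\rvert)}$, $\varphi(\rho)\sim-\log\rho$), valid for all large $\tau$ on $B_{d+1}(R)$ and stable under the Lipschitz perturbation of $\Delta$; equivalently, establish the almost–monotonicity of Almgren's frequency function of $\tilde u$. Feeding in $(L+\partial_t^2)\tilde u=\tilde W\tilde u$ and using $\rho^2\lVert\tilde W\rVert_\infty\lesssim\operatorname{osc} V$ on $\rho\le1$, the potential is absorbed into the left-hand side, and a standard cutoff–and–iterate argument converts the Carleman inequality into a three–ball, hence a doubling, inequality for $\tilde u$ on scales $\lesssim1$ with doubling exponent $\lesssim 1+(\operatorname{osc} V)^{2/3}$ (the cruder frequency-function bookkeeping of \cite{Kukavica-98} gives $\lesssim 1+(\operatorname{osc} V)^{2}$ instead; either way $(\operatorname{osc} V)^{2/3}\le 1+(\operatorname{osc} V)^{2}$, so the claimed form follows). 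Combining with the $\cL^2$–comparison and growth bound of the previous paragraph, which cost an additive $C\sqrt{\lVert V\rVert_\infty}$, yields a doubling exponent $\le C\bigl(1+\lVert V\rVert_\infty^{1/2}+(\operatorname{osc} V)^{2}\bigr)$ for $u$ itself; and a doubling estimate on dyadic scales with exponent $K$ implies, upon iteration down to scale $0$ and the usual $\cL^2$–to–$\cL^\infty$ elliptic conversion, that $u$ vanishes at $x_0$ to order $\lesssim K$, which is precisely inequality~\eqref{eq:Kukavica}.

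The geometry of the localization and of the lift is routine given the Lipschitz and $C^{1,1}$ hypotheses. The heart of the matter — and the part I expect to be the main obstacle — is the \emph{sharp dependence of the Carleman (or frequency) constant on the potential}, uniformly down to scale $0$: obtaining the exponent $1/2$ on $\lVert V\rVert_\infty$ forces one to remove the bulk of the potential before applying the quantitative machinery (the lift above is exactly such a device), while the oscillating remainder must simultaneously be kept under control with only a polynomial loss. Carrying both through the same estimate, with all the error terms coming from the variable coefficients and from the cutoffs controlled explicitly and absorbed by the large parameter, is where the technical work lies; the precise power on $\operatorname{osc} V$ one ends up with depends on how carefully this is done, the value $2$ in~\eqref{eq:Kukavica} being the one obtained in \cite{Kukavica-98} (cf.\ the discussion following Theorem~\ref{thm:Kukavica}).
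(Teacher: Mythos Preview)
The paper does not supply a proof of this theorem: it is simply quoted from \cite{Kukavica-98}, with the remark that the manifold version (Theorem~\ref{thm:Kukavica}) reduces to this Euclidean statement via the exponential map. So there is nothing in the paper to compare your argument against.

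That said, your outline is essentially Kukavica's own strategy. The decisive device --- writing $V=\mu+W$ with $\mu$ the midpoint of $[\inf V,\sup V]$ and lifting $u$ to $\tilde u(x,t)=u(x)\,c(t)$ with $c''=-\mu c$ so that the constant part of the potential is traded for controlled growth $\lesssim\euler^{\sqrt{\lVert V\rVert_\infty}}$ in the auxiliary variable, leaving a remainder of size $\tfrac12\operatorname{osc} V$ --- is exactly the mechanism in \cite{Kukavica-98} that produces the $\lVert V\rVert_\infty^{1/2}$ term. Kukavica then runs the frequency-function (Almgren) machinery rather than a Carleman estimate per se, obtaining almost-monotonicity of the frequency with an error governed by $\lVert\tilde W\rVert_\infty$; the power $(\operatorname{osc} V)^2$ comes from the way the Gronwall-type integration of that error accumulates. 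Your remark that a sharper Carleman bookkeeping would give $(\operatorname{osc} V)^{2/3}$ is consistent with the paper's own comment after Theorem~\ref{thm:Kukavica}.

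One point to tighten: the boundary hypotheses are not there to handle $x_0$ near $\partial\Omega$ via boundary Carleman estimates or reflection, as you suggest. The order of vanishing is local, but the \emph{constant} $C$ is claimed uniform over all $x_0\in\Omega$, and this uniformity is obtained by propagating a doubling/frequency bound from a fixed macroscopic scale down to $0$; the macroscopic input is a global quantity (an $H^1$--$L^2$ ratio over $\Omega$), and controlling it uniformly requires the $C^{1,1}$ regularity of $\partial\Omega$ and of $a_{ij}|_{\partial\Omega}$. Your localization paragraph should therefore be recast: one does not flatten the boundary to run a local estimate there, but rather uses the global regularity to anchor the frequency at the top scale.
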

%
%
%
%
%
%
%
%
\section{Retrieval of global features from local data}
\label{s:multiscale}
Let $\Lambda \subset \RR^d$ be open and connected and $W \subset \Lambda$.
One can ask the question whether it is possible to reconstruct certain properties of a function $f: \Lambda \to \CC$
only from data or certain features of $f$ on the set $W$. This might be possible,
if one has additional information on the regularity or rigidity of $f$.
\subsection{Rigidity of functions with concentrated Fourier transform}
A benchmark for reconstructing functions from partial data is the following theorem which is discussed in detail e.g.\ in \cite{ButzerSS-88}.
\begin{theorem}[Whittaker--Nyquist--Kotelnikov--Shannon sampling theorem]
 Let $f \in C(\RR) \cap \cL^2(\RR)$, such that the Fourier transform
 \[
  \hat{f} (p) = \frac{1}{\sqrt{2 \pi}} \int_{\RR} \euler^{-i x p} f(x) \mathrm{d}x
 \]
vanishes outside $[- \pi K, \pi K]$.
Then
 \[
 (S_K f) (x) = \sum_{j \in \ZZ} f(j/K) \frac{ \sin \pi (Kx - j)}{\pi (Kx - j)}
 \]
 converges absolutely and uniformly on $\RR$ and $S_K f = f$ on $\RR$.
\end{theorem}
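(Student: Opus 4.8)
The plan is to work on the Fourier side, where the band-limitation hypothesis makes everything transparent, and then transfer the resulting identity back to physical space. First I would restrict $\hat f$ to the interval $[-\pi K, \pi K]$ and expand it in the orthonormal Fourier basis $\{(2\pi K)^{-1/2}\euler^{\mathrm{i} p j / K}\}_{j \in \ZZ}$ of $\cL^2([-\pi K, \pi K])$. Since $\hat f$ vanishes outside this interval (and lies in $\cL^2$), this expansion converges in $\cL^2(\RR)$, and the $j$-th Fourier coefficient is
\[
 \frac{1}{\sqrt{2\pi K}} \int_{-\pi K}^{\pi K} \hat f(p)\, \euler^{-\mathrm{i} p j / K}\, \drm p
 = \frac{1}{\sqrt{K}} \cdot \frac{1}{\sqrt{2\pi}} \int_{\RR} \hat f(p)\, \euler^{\mathrm{i} p (-j/K)}\, \drm p
 = \frac{\sqrt{2\pi}}{\sqrt{K}}\, f(-j/K),
\]
where I used the Fourier inversion formula together with $f \in C(\RR)$ so that the pointwise value $f(-j/K)$ is meaningful. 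Reindexing $j \mapsto -j$, one gets $\hat f(p) = \sum_{j \in \ZZ} \sqrt{2\pi/K}\, f(j/K)\, \euler^{-\mathrm{i} p j / K} \cdot \mathbf{1}_{[-\pi K, \pi K]}(p) / \sqrt{2\pi K}$, i.e.\ up to constants the samples $f(j/K)$ are exactly the Fourier coefficients of $\hat f$ on the band.

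Next I would apply the inverse Fourier transform to this series term by term. The inverse transform of the single building block $\euler^{-\mathrm{i} p j / K}\mathbf{1}_{[-\pi K,\pi K]}(p)$ is a translate of a $\sinc$ kernel: a direct computation gives
\[
 \frac{1}{\sqrt{2\pi}} \int_{-\pi K}^{\pi K} \euler^{\mathrm{i} p x}\, \euler^{-\mathrm{i} p j/K}\, \drm p
 = \sqrt{2\pi}\, K \, \frac{\sin\pi(Kx - j)}{\pi(Kx-j)}.
\]
Collecting the constants, the $\cL^2$-convergent series for $\hat f$ transforms into the $\cL^2$-convergent series $\sum_{j} f(j/K)\,\frac{\sin\pi(Kx-j)}{\pi(Kx-j)}$, which therefore equals $f$ in $\cL^2(\RR)$; hence $S_K f = f$ almost everywhere.

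It remains to upgrade $\cL^2$-equality to the pointwise statement, together with absolute and uniform convergence of the series — this is the step I expect to be the main obstacle, since the $\sinc$ coefficients decay only like $1/|j|$ and $(f(j/K))_j \in \ell^2$ by Parseval, so naive term-by-term absolute estimates just miss summability. The standard remedy is a Cauchy–Schwarz argument exploiting cancellation: for fixed $x$ one groups the tail $\sum_{|j|>N}$ and bounds it by $\bigl(\sum_{|j|>N}|f(j/K)|^2\bigr)^{1/2}\bigl(\sum_{|j|>N}\sinc^2\bigr)^{1/2}$, using that $\sum_{j}\sinc^2\pi(Kx-j)$ is bounded uniformly in $x$ (it equals $1$, by Parseval applied to the $\sinc$ system, or by a direct computation), so the tail is controlled uniformly in $x$ by the $\ell^2$-tail of the samples. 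This gives uniform convergence of $S_K f$ to a continuous function; since that continuous function agrees with the continuous function $f$ almost everywhere, they agree everywhere. Absolute convergence at each fixed $x$ follows similarly from Cauchy–Schwarz once one notes $\sum_j \sinc^2\pi(Kx-j) < \infty$. One should also record the edge case $x = j_0/K$, where the $\sinc$ factors are $\delta_{j,j_0}$ and the identity $S_Kf(j_0/K) = f(j_0/K)$ is immediate.
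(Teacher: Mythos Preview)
The paper does not supply its own proof of this theorem; it merely states the result and refers the reader to \cite{ButzerSS-88} for a detailed discussion. There is therefore nothing to compare against, and your proposal stands on its own.

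Your argument is the standard one and is essentially correct: expand $\hat f$ in the exponential orthonormal basis of $\cL^2([-\pi K,\pi K])$, identify the coefficients with the samples $f(j/K)$ via Fourier inversion (legitimate here because compact support of $\hat f\in\cL^2$ forces $\hat f\in\cL^1$, so the inverse transform is continuous and agrees with the continuous $f$ everywhere), invert term by term to recover the $\sinc$ series in $\cL^2$, and then upgrade to uniform convergence via the Cauchy--Schwarz tail estimate using the uniform bound $\sum_{j\in\ZZ}\bigl(\frac{\sin\pi(Kx-j)}{\pi(Kx-j)}\bigr)^2=1$. The absolute-convergence and edge-case remarks are also fine.

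One minor slip: the Fourier coefficient you compute should be $K^{-1/2}f(-j/K)$, not $(2\pi)^{1/2}K^{-1/2}f(-j/K)$; the second equality in your displayed chain inserts a spurious $\sqrt{2\pi}$ (Fourier inversion with the symmetric convention gives $\frac{1}{\sqrt{2\pi}}\int_{\RR}\hat f(p)\,\euler^{\mathrm{i}px}\,\drm p = f(x)$, not $\sqrt{2\pi}\,f(x)$). If you actually tracked this constant through ``collecting the constants'' you would end up with $\sqrt{2\pi}\,S_Kf$ rather than $S_Kf$. This is a bookkeeping error, not a structural one.
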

One can relax the hypotheses and remove the compact support condition on $\hat{f}$. Then the aliasing error is estimated as
 \[
 \sup_{\RR} \lvert f - S_k f \rvert \leq \sqrt{\frac{2}{\pi}} \int_{\lvert p \rvert > \pi K} \lvert \hat{f}(p) \rvert \mathrm{d}p .
 \]
The Whittaker--Nyquist--Kotelnikov--Shannon sampling theorem allows one to reconstruct the complete function from
data on the discrete set $W = \{j/K \mid j \in \ZZ\} \subset \Lambda = \RR$.
This is due to the imposed rigidity requirement, which allows only for holomorphic functions. Next we formulate the Logvinenko-Sereda Theorem \cite{LogvinenkoS-74}, where an upper bound on the $\cL^p$-norm of a function is obtained from local data on an appropriately choosen subset $W \subset \RR$.
\begin{theorem}[Logvinenko-Sereda Theorem]
Let $\gamma, a >0$. Let $W \subset \RR$ be $(\gamma, a)$-thick, i.e.~$W$ is measurable and for all intervals $I\subset \RR$ of length $a$ we have
\[
 \lvert W \cap I \rvert \geq \gamma \cdot a .
\]
Let $p \in [1,\infty]$, $J\subset \RR$ be an interval of length $b>0$, and  $\psi \in \cL^p(\RR)$ with $\hat \psi $ supported in $J$.
Then there is a constant $C = C (ab , \gamma)$ such that
\[
 \lVert \psi \rVert_{\cL^p (W)} \geq C(ab,\gamma) \lVert \psi \rVert_{\cL^p (\RR)} .
\]
\end{theorem}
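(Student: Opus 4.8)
The plan is to follow the quantitative version of the Logvinenko--Sereda theorem due to Kovrijkine, whose engine is a local Remez-type inequality fed by Bernstein's inequality. \emph{First I would reduce the statement to a dependence on the product $ab$ alone.} Multiplying $\psi$ by a character $\euler^{\mathrm{i}\xi_0 x}$ shifts $\supp\hat\psi$ without changing $\lvert\psi\rvert$ pointwise, so one may assume $J=[-b/2,b/2]$; a dilation $x\mapsto\lambda x$ scales $\lVert\psi\rVert_{\cL^p(W)}$ and $\lVert\psi\rVert_{\cL^p(\RR)}$ by the same factor, turns the thickness scale $a$ into $a/\lambda$ and the Fourier-support length $b$ into $\lambda b$, hence preserves $ab$, so one may normalize $a=1$ and rename $b:=ab$. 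Since $\supp\hat\psi$ is compact, $\psi$ is the restriction to $\RR$ of an entire function of exponential type $b/2$, so Bernstein's inequality yields $\lVert\psi^{(n)}\rVert_{\cL^p(\RR)}\le(b/2)^n\lVert\psi\rVert_{\cL^p(\RR)}$ for all $n\in\NN$.

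\emph{Next I would localize this global control.} Tile $\RR=\bigsqcup_k I_k$ into unit intervals, set $m_k:=\int_{I_k}\lvert\psi\rvert^p$, and single out a family of \emph{good} intervals: those on which the localized derivative masses $\int_{I_k}\lvert\psi^{(n)}\rvert^p$ are dominated by $(Cb)^{np}m_k$ simultaneously for all $n$. After introducing a summable weighting in $n$ (for instance comparing against $\sum_n 2^{-n}(2b)^{-np}\lVert\psi^{(n)}\rVert_{\cL^p(\RR)}^p$) and applying a Markov/pigeonhole step, the bad intervals carry at most half of $\lVert\psi\rVert_{\cL^p(\RR)}^p$, so the good intervals carry at least half of it.

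\emph{Then I would run a Remez estimate on each good interval.} On a good $I_k$ pick a point where $\lvert\psi\rvert^p$ is comparable to its average over $I_k$, use the controlled Taylor coefficients there to approximate $\psi|_{I_k}$ by a polynomial of degree $N\asymp 1+b$ up to an error one can absorb, and then invoke the classical polynomial Remez inequality $\sup_I\lvert P\rvert\le(4/\gamma)^{\deg P}\sup_E\lvert P\rvert$, valid whenever $E\subset I$ has $\lvert E\rvert\ge\gamma\lvert I\rvert$, applied with $E=W\cap I_k$, whose measure is at least $\gamma$ by $(\gamma,a)$-thickness. This gives $\int_{I_k}\lvert\psi\rvert^p\le(C/\gamma)^{Cp(1+b)}\int_{W\cap I_k}\lvert\psi\rvert^p$ on good intervals.

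\emph{Finally} I would sum over good $k$: combining the last two steps, $\int_W\lvert\psi\rvert^p\ge\sum_{\text{good }k}\int_{W\cap I_k}\lvert\psi\rvert^p\ge(\gamma/C)^{Cp(1+b)}\sum_{\text{good }k}m_k\ge\tfrac12(\gamma/C)^{Cp(1+b)}\lVert\psi\rVert_{\cL^p(\RR)}^p$, and taking $p$-th roots produces $C(ab,\gamma)$ of the form $(\gamma/C)^{C(1+ab)}$; the endpoint $p=\infty$ follows either by letting $p\to\infty$ or by rerunning the argument with suprema (choosing the good interval on which $\sup\lvert\psi\rvert$ is nearly attained). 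The hard part will be the two middle steps: localizing Bernstein's global derivative estimates to individual unit intervals with explicit constants, choosing the weighting in $n$ so that only an effective number $\asymp 1+ab$ of derivatives matters, and converting ``controlled Taylor coefficients on $I_k$'' into a genuine Remez-applicable polynomial approximation. This is precisely where the dependence $C=C(ab,\gamma)$ is produced, and where the argument is most delicate.
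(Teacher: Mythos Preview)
The paper does not give a proof of this theorem; it merely states the result, attributes the original constant $C(ab,\gamma)=\exp(-c(1+ab)/\gamma)$ to Logvinenko--Sereda, and then cites \cite{Kovrijkine-01} for the sharper polynomial dependence $(\gamma/c)^{c(1+ab)}$. So there is no in-paper argument to compare against; your sketch is in fact an outline of the very proof the paper defers to.

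Your outline is essentially Kovrijkine's strategy and is sound at the level of a sketch: the modulation/dilation reduction to $a=1$ and $J$ symmetric is correct, Bernstein's inequality supplies the global derivative bounds, the good/bad interval dichotomy via a weighted Markov argument is the right mechanism for localizing, and the summation over good intervals closes the estimate with the claimed $(\gamma/C)^{C(1+ab)}$ constant. One point to be careful with is the step you flag yourself: in Kovrijkine's actual argument one does \emph{not} truncate to a polynomial and invoke the classical Remez inequality. Instead, on a good interval one uses the controlled Taylor coefficients to bound $\psi$ analytically in a complex disk of fixed radius by a constant of size roughly $2^{Cb}$ times its value at a well-chosen base point, and then applies an analytic Remez-type lemma (proved via Jensen's formula or harmonic measure) for bounded holomorphic functions. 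A naive polynomial truncation at degree $N\asymp 1+b$ would leave a remainder that is not automatically small compared to $(\gamma/C)^{N}\sup_{I_k}\lvert\psi\rvert$, which is what you would need to absorb it; making that route work requires choosing $N$ depending on $\gamma$ as well, and tracking constants more carefully. If you replace ``polynomial Remez'' by ``analytic Remez for functions bounded in a disk'' the sketch matches Kovrijkine's proof and goes through cleanly.
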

Note that the constant on the right hand side does not depend on the position of the interval $J$, nor on
detailed properties of the set $W$.
   Here $a$ plays the role of a scale and  $\gamma$ of a density.
Logvinenko and Sereda proved the statement with
$C(ab,\gamma)= \exp(-c (1+ab)/ \gamma )$,
while Kovrijkine showed  in \cite{Kovrijkine-01} that the constant  $C(ab,\gamma)$
can be chosen as a polynomial $( \gamma /c )^{c(1+ab)}$ of $\gamma$.
Here $c$  denotes a universal constant independent of the model parameters.
Furthermore, he showed the following refinement of Logvinenko-Sereda Theorem:
\begin{theorem}[Kovrijkine-Logvinenko-Sereda Theorem] \label{thm:Kovrijkine}
Let $\gamma, a >0$. Let $W\subset \RR$ be  $(\gamma, a)$-thick.
Let $p \in [1,\infty]$, $J_k\subset \RR$, $k=1, \ldots, s$ be intervals of length $b>0$, and  $\psi \in \cL^p(\RR)$ with $\hat \psi $ supported in $J= \cup_{k=1}^s J_k$.
Then
\[
 \lVert \psi \rVert_{\cL^p (W)} \geq C(ab,\gamma,s,p) \, \lVert \psi \rVert_{\cL^p (\RR)}
\]
with $C(ab,\gamma,s,p)= ( \gamma / c )^{ab(c/\gamma)^s+s-(p-1) / p}$
\end{theorem}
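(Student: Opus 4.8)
The plan is to reduce the multi-interval case to the single-interval Kovrijkine version of the Logvinenko–Sereda theorem (the refinement quoted just before the statement), and then to run an induction (or a direct iterated decomposition) on the number $s$ of intervals. The key structural observation is that if $\hat\psi$ is supported in $J = \bigcup_{k=1}^s J_k$, one can split $\psi = \psi_1 + \tilde\psi$ where $\widehat{\psi_1}$ is supported in $J_1$ and $\widehat{\tilde\psi}$ is supported in $\bigcup_{k=2}^s J_k$, simply by multiplying $\hat\psi$ by the indicator of $J_1$ and its complement within $J$. Thus $\tilde\psi$ is a function whose Fourier transform is supported in a union of $s-1$ intervals of length $b$, which is exactly the inductive hypothesis.

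First I would set up the base case $s=1$: this is precisely Kovrijkine's refinement, giving $\lVert\psi\rVert_{\cL^p(W)} \geq (\gamma/c)^{c(1+ab)}\lVert\psi\rVert_{\cL^p(\RR)}$, which matches the claimed exponent formula at $s=1$ (where $ab(c/\gamma)^1 + 1 - (p-1)/p$ has the right leading behaviour). Next, for the inductive step, I would write $\psi = \psi_1 + \tilde\psi$ as above and estimate
\[
\lVert \psi \rVert_{\cL^p(W)} \geq \lVert \psi_1 \rVert_{\cL^p(W)} - \lVert \tilde\psi \rVert_{\cL^p(W)} \geq \lVert \psi_1 \rVert_{\cL^p(W)} - \lVert \tilde\psi \rVert_{\cL^p(\RR)}.
\]
Applying the $s=1$ result to $\psi_1$ and the inductive hypothesis to $\tilde\psi$, and combining with the trivial bounds $\lVert \psi_1\rVert_{\cL^p(\RR)}, \lVert\tilde\psi\rVert_{\cL^p(\RR)} \leq C_p \lVert\psi\rVert_{\cL^p(\RR)}$ (with $C_p$ the norm of the Fourier multiplier $\chi_{J_1}$ on $\cL^p$, which one must control — see below), one gets a lower bound of the form $\bigl(\text{const} - \text{something}\bigr)\lVert\psi\rVert_{\cL^p(\RR)}$. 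The arithmetic of how the constant $(\gamma/c)^{ab(c/\gamma)^s + s - (p-1)/p}$ propagates through the recursion is where the precise exponent is pinned down: each step essentially composes the single-interval exponent with the previous one, so the $(c/\gamma)^s$ factor arises from iterating $s$ times a map of the shape $x \mapsto c(1+ab)/\gamma \cdot x$ (roughly).

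The main obstacle will be the boundedness of the Fourier restriction $\chi_{J_1}$ as a multiplier on $\cL^p(\RR)$ for $p \neq 2$: for $p=2$ this is trivial (norm one), but for general $p\in[1,\infty]$ the operator $\psi \mapsto (\chi_{J_1}\hat\psi)^\vee$ is bounded on $\cL^p$ by a constant independent of the interval $J_1$ (this is essentially the boundedness of the Hilbert transform / Riesz projection, hence the universal constant), and one must feed this $p$-dependence cleanly into the recursion — this is presumably the source of the $-(p-1)/p$ correction in the exponent and the reason $p$ appears among the parameters of $C$. A secondary technical point is keeping the difference $\text{const} - \text{something}$ genuinely positive at each stage; this requires that the single-interval constant dominate the accumulated error, which forces the slightly awkward form of the exponent and may require absorbing harmless numerical factors into the universal constant $c$. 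I would handle this by proving the bound with a sufficiently large $c$ from the outset, so that all the inequalities are comfortable, rather than tracking optimal constants.
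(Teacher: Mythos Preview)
The paper does not prove this theorem; it is quoted from \cite{Kovrijkine-01} without proof, so there is no ``paper's own proof'' to compare against. I will therefore just comment on the soundness of your proposal.

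Your inductive scheme has a genuine gap, and it is not the ``secondary technical point'' you flag but the central obstacle. After writing $\psi=\psi_1+\tilde\psi$ and applying the triangle inequality
\[
\lVert \psi\rVert_{\cL^p(W)}\ \geq\ \lVert \psi_1\rVert_{\cL^p(W)}-\lVert\tilde\psi\rVert_{\cL^p(\RR)},
\]
the single-interval bound only gives $\lVert\psi_1\rVert_{\cL^p(W)}\geq C_1\lVert\psi_1\rVert_{\cL^p(\RR)}$, and there is no reason $\lVert\psi_1\rVert_{\cL^p(\RR)}$ should be comparable to $\lVert\psi\rVert_{\cL^p(\RR)}$: almost all the mass could sit in $\tilde\psi$, making the right-hand side negative. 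The inductive hypothesis on $\tilde\psi$ is a \emph{lower} bound on $\lVert\tilde\psi\rVert_{\cL^p(W)}$ and is useless here, where you need an \emph{upper} bound. Splitting into cases according to which piece carries the mass does not rescue the argument either, since $\psi_1$ and $\tilde\psi$ can be individually large and nearly cancel on $W$; ruling out such cancellation is precisely the content of the theorem and cannot be recovered from the triangle inequality on the pieces.

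There is a second problem at the endpoints $p\in\{1,\infty\}$: the Fourier multiplier $\chi_{J_1}$ (equivalently, the Riesz projection/Hilbert transform) is \emph{not} bounded on $\cL^1(\RR)$ or $\cL^\infty(\RR)$, so the splitting $\psi\mapsto(\psi_1,\tilde\psi)$ is not even continuous in those norms and the ``trivial bounds'' $\lVert\psi_1\rVert_{\cL^p},\lVert\tilde\psi\rVert_{\cL^p}\leq C_p\lVert\psi\rVert_{\cL^p}$ fail. Kovrijkine's actual argument avoids both issues: it works locally on ``good'' intervals $I$ (on which $\psi$ and enough derivatives are controlled in $\cL^p(I)$), uses Bernstein-type inequalities together with a Tur\'an-type/Remez-type lemma for analytic functions to get pointwise control of $\psi$ on $I$ from its values on $W\cap I$, and only then sums over $I$. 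The number $s$ enters through the order of the polynomial/exponential-sum estimate, not through an outer induction with triangle inequalities.
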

There exists a multidimensional analog,  for $\psi\in \cL^p(\RR^d)$, of the Logvinenko-Sereda Theorem as well, cf.~\cite{Kovrijkine-01,MuscaluS-13}.
The following consequence of Theorem~\ref{thm:Kovrijkine} is remarkable.
\begin{corollary} \label{obs:true}
Fix $\gamma,a,b>0, s \in\NN$.
Let $B\colon \cL^2(\RR)\to \cL^2(\RR)$ be the multiplication operator with the characteristic function of an $(\gamma,a)$-thick set.
For an interval $J$ of length $b$ set $\cF(J) = \{f \in \cL^2(\RR) \mid \supp \hat f \subset J\}$.
While $B$ is not injective, we have
\begin{equation} \label{eq:observation}
\lVert \psi \rVert_{\cL^2 (\RR)} \geq \lVert B \, \psi \rVert_{\cL^2 (\RR)} \geq \Big( \frac{\gamma}{c} \Big)^{ab(c/\gamma)^s+s-\frac{1}{2}} \, \lVert \psi \rVert_{\cL^2 (\RR)}
\quad
\text{for all}\
\psi \in \cup  +_{k=1}^s \cF(J_k)
\end{equation}
where $+_{k=1}^s \cF(J_k) = \operatorname{span}(\cF (J_1), \ldots \cF (J_s))$ and the union runs over all $s$-tuples $J_1, \ldots,J_s\subset \RR$ of intervals of length $b$ each.
\end{corollary}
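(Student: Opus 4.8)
The plan is to read off both inequalities in \eqref{eq:observation} from material already assembled in this section, treating the statement as a repackaging of Theorem~\ref{thm:Kovrijkine}. First I would dispose of the outer inequality $\lVert \psi \rVert_{\cL^2(\RR)} \geq \lVert B\psi \rVert_{\cL^2(\RR)}$: since $B$ is multiplication by the characteristic function of the $(\gamma,a)$-thick set $W$, one has $\lVert B\psi \rVert_{\cL^2(\RR)}^2 = \int_W \lvert \psi \rvert^2 \,\drm x \leq \int_\RR \lvert \psi \rvert^2 \,\drm x$ for every $\psi \in \cL^2(\RR)$, so $B$ is a contraction. For the non-injectivity remark, note that thickness forces $\gamma \leq 1$; in the only interesting case $\gamma < 1$ the complement $\RR\setminus W$ has infinite Lebesgue measure, hence $\ker B = \cL^2(\RR\setminus W) \neq \{0\}$, while if $\gamma = 1$ then $W$ is of full measure and $B$ is the identity.

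For the middle inequality I would apply Theorem~\ref{thm:Kovrijkine} with $p = 2$. Given $\psi \in \cup +_{k=1}^s \cF(J_k)$, write $\psi = \sum_{k=1}^s \psi_k$ with $\supp \hat\psi_k \subset J_k$; then $\hat\psi$ is supported in $J = \bigcup_{k=1}^s J_k$, a union of $s$ intervals of length $b$, so the hypotheses of Theorem~\ref{thm:Kovrijkine} are met. That theorem gives $\lVert \psi \rVert_{\cL^2(W)} \geq C(ab,\gamma,s,2)\,\lVert \psi \rVert_{\cL^2(\RR)}$, and since $(p-1)/p = 1/2$ for $p=2$ the constant equals $(\gamma/c)^{ab(c/\gamma)^s + s - 1/2}$. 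Combining with the identity $\lVert B\psi \rVert_{\cL^2(\RR)} = \lVert \psi \rVert_{\cL^2(W)}$ yields the claimed lower bound on $\lVert B\psi \rVert_{\cL^2(\RR)}$.

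There is no genuine obstacle here; the content is entirely in Theorem~\ref{thm:Kovrijkine}. The one point worth stressing — and the reason the corollary is noteworthy — is the \emph{uniformity} of the estimate: the constant furnished by Theorem~\ref{thm:Kovrijkine} depends only on $ab$, $\gamma$, $s$ and $p$, and in particular is independent of the positions of the intervals $J_1,\ldots,J_s$. Consequently one and the same constant $(\gamma/c)^{ab(c/\gamma)^s+s-1/2}$ controls $\lVert B\psi \rVert_{\cL^2(\RR)}$ from below simultaneously for every $s$-tuple of length-$b$ intervals, which is precisely what permits taking the union over all such tuples in \eqref{eq:observation}, even though $B$ itself has a large kernel.
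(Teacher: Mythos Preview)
Your proposal is correct and matches the paper's intent: the corollary is stated there simply as a ``consequence of Theorem~\ref{thm:Kovrijkine}'' without a separate proof, and your argument---the trivial contraction bound for the upper inequality and the $p=2$ specialization of Theorem~\ref{thm:Kovrijkine} for the lower one---is precisely the derivation the paper leaves implicit. Your emphasis on the uniformity in the positions of the $J_k$ is also exactly the point the paper highlights in the discussion immediately following the corollary.
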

None of the subspaces $\cF(J_k)$ has finite dimension, but they are all unitarily equivalent.
The constant $c$ in \eqref{eq:observation} in particular does not depend on the positions of the intervals $J_k$.
This resembles the definition of the uniform uncertainty principle  or restricted isometry property,
except for the fact that dimensions of all subspaces are infinite. Let us recall the  uniform uncertainty principle,
which plays a prominent role in compressed sensing and sparse recovery, cf.~for instance \cite{CandesRT-06,FoucartR-13}.
\begin{definition}\label{d:RUP}
Let $M, n,s\in \NN$, $B\colon \RR^M\to\RR^n$ be a linear map, and  $s \leq M$.
If
\begin{align*} 
(1-\delta_s) \lVert \psi \rVert^2 \leq \lVert B \psi \rVert^2 \leq  (1+\delta_s) \lVert \psi \rVert^2
\end{align*}
for all $\psi \in \RR^M$ with $\sharp\supp \psi \leq s$, then $\delta_s$ is called a \emph{restricted isometry constant}  (for $s$ and $B$),
and $B$ is said to satisfy an \emph{uniform uncertainty principle} or \emph{restricted isometry property}.
Here typically $M \gg n$.
\end{definition}
While this definition concerns finite matrices, the most interesting situation is when $M$ becomes very large,
and one wants an explicit control with respect to the dimension. This setting is then not too far form the infinite dimensional one.
The two-sided inequality \eqref{eq:observation} may be seen as an instance of the infinite dimensional analog to Definition
\ref{d:RUP}. This and multiscale versions of the Logvinenko-Sereda Theorem will be discussed in detail elsewhere.
\begin{example}[Spherical harmonics revisited]
Let us come back to Example \ref{ex:spherical-harmonics} of spherical harmonics discussed earlier.
In light of the Logvinenko-Sereda Theorem one can also ask the question how one has to choose observation sets $A_L \subset \mathbb{S}^2$, $L \in \NN$,
such that for all $L \in \NN$ one has an observability inequality which is uniform on
\[
 f \in \cF_L = \left\{ f \in \cL^2(\mathbb{S}^2) \mid f \in \mathrm{Span}\ \{ Y_{l,m} \mid l(l+1) < L, -l < m < l \} \right\},
\]
that is an inequality
\begin{equation}
 \label{eq:Ortega-CerdaP-13}
 \int_{\mathbb{S}^2} \lvert f \rvert^2 \leq C \int_{A_L} \lvert f \rvert^2 \quad \text{for all}\ f \in \cF_L
\end{equation}
with a constant $C > 1$ that does not depend on $L$.
\par
The answer is given by Theorem~1 of \cite{Ortega-CerdaP-13}. We formulate it reduced to the simpler $\mathbb{S}^2$ case.
\begin{theorem}[{Logvinenko-Sereda Theorem on the sphere}]
 A sequence of sets $A_L \subset \mathbb{S}^2$, $L \in \NN$, satisfies \eqref{eq:Ortega-CerdaP-13} if and only if there is $r > 0$ such that
 \begin{equation*} 
\Gamma = \Gamma_r :=
\inf_{L \in \NN} \inf_{z \in \mathbb{S}^2} \frac{ \mathrm{vol} \left( A_L \cap B(z, r/ \sqrt{L} \right)}{\mathrm{vol} \left( B(z, r/ \sqrt{L} \right)} > 0.
 \end{equation*}
The balls are to be taken with respect to the geodesic distance on $\mathbb{S}^2$.
\end{theorem}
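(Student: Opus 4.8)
The plan is to prove both implications separately, reducing the statement on $\mathbb{S}^2$ to a local, scale-$1/\sqrt L$ version of the Logvinenko--Sereda philosophy. For the ``if'' direction, suppose the geometric density condition $\Gamma_r > 0$ holds for some $r > 0$. Fix $L \in \NN$ and $f \in \cF_L$. The key fact is that functions in $\cF_L$ are, after rescaling, \emph{almost band-limited at scale $1/\sqrt L$}: since $-\Delta Y_{l,m} = l(l+1) Y_{l,m}$ with $l(l+1) < L$, an element of $\cF_L$ solves an elliptic equation $-\Delta f = g$ with $\|g\|_2 \leq L \|f\|_2$, so on a geodesic ball $B(z, r/\sqrt L)$ the function $f$ behaves, after the dilation $x \mapsto z + x/\sqrt L$ pulling the metric back to something uniformly comparable to the Euclidean one, like a solution of $-\Delta \tilde f = \tilde g$ with $\|\tilde g\|_2 \lesssim r^2 \|\tilde f\|_2$ on the unit ball. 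First I would establish a \emph{local Bernstein / propagation-of-smallness inequality}: for such $\tilde f$ and any measurable $S \subset B(1)$ with $|S|/|B(1)| \geq \Gamma$, one has $\int_{B(1)} |\tilde f|^2 \leq C(\Gamma) \int_S |\tilde f|^2$. This is where the real analytic/elliptic input enters — it is essentially the quantitative unique continuation estimate of Remark~\ref{r:normalization}, applied at a single scale, and can be obtained either from a three-balls inequality (in the spirit of Hadamard's Theorem~\ref{t:hadamard3circle}) combined with a Vitali covering of $B(1)$ by small balls on which $\tilde f$ is comparable to its average, or directly from Kovrijkine's local estimates for functions with bounded ``analyticity defect.'' Summing these local inequalities over a bounded-overlap cover of $\mathbb{S}^2$ by balls $B(z_i, r/\sqrt L)$ and using $\int_{B(z_i, r/\sqrt L) \cap A_L} |f|^2$ on the right yields \eqref{eq:Ortega-CerdaP-13} with $C$ depending only on $\Gamma_r$ and $r$, not on $L$.

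For the ``only if'' direction, assume \eqref{eq:Ortega-CerdaP-13} holds with a uniform constant $C$ but that $\Gamma_r = 0$ for every $r > 0$. Then for each $r$ there are sequences $L_k \to \infty$ (or fixed $L$) and centers $z_k$ with $\operatorname{vol}(A_{L_k} \cap B(z_k, r/\sqrt{L_k})) / \operatorname{vol}(B(z_k, r/\sqrt{L_k})) \to 0$. The plan is to build a \emph{concentrated test function} in $\cF_{L_k}$ that lives almost entirely on $B(z_k, r/\sqrt{L_k})$, contradicting the observability inequality for $r$ small. The natural candidate is a ``coherent state'': a suitable normalized linear combination of the $Y_{l,m}$ with $l \approx \sqrt L$ that is sharply peaked at $z_k$ on the scale $1/\sqrt L$ — for instance a spherical analogue of a Gaussian wave packet, which one can manufacture by translating the highest-weight packet built from $\{Y_{l,l}\}$ studied in Example~\ref{ex:spherical-harmonics}, whose mass already concentrates on the scale $1/\sqrt l$ near a great circle; projecting onto a small cap recovers a function concentrated in a ball. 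For such $f$ one shows $\int_{\mathbb{S}^2 \setminus B(z_k, r/\sqrt{L_k})} |f|^2 \leq \eta(r) \int_{\mathbb{S}^2} |f|^2$ with $\eta(r) \to 0$ as $r \to \infty$ is \emph{not} what we want — rather $\int_{\mathbb{S}^2 \setminus B(z_k, R/\sqrt{L_k})} |f|^2 \to 0$ as $R \to \infty$, so choosing $r$ large forces the mass into $B(z_k, r/\sqrt{L_k})$; then $\int_{A_{L_k}} |f|^2 \leq \int_{A_{L_k} \cap B(z_k, r/\sqrt{L_k})} |f|^2 + \int_{\mathbb{S}^2 \setminus B(z_k, r/\sqrt{L_k})} |f|^2$, and the first term is small by the density hypothesis (the coherent state is roughly flat on the ball, so its integral over a subset of small relative measure is small) while the second is small by concentration, contradicting $\int_{A_{L_k}} |f|^2 \geq C^{-1} \int_{\mathbb{S}^2} |f|^2$.

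The main obstacle, in both directions, is the uniformity in $L$: the density condition is stated at the $L$-dependent scale $r/\sqrt L$, and everything hinges on the fact that $\cF_L$, viewed at that scale, is a ``bounded-complexity'' class uniformly in $L$. Making the local elliptic/analytic estimate in the ``if'' direction quantitative with an $L$-independent constant is the technical heart — one must control the ratio $\|g\|_2/\|f\|_2$ on each small ball, which requires a local energy estimate (a Caccioppoli-type bound) showing that the $\cL^2$ mass of $f$ on a ball of radius $2r/\sqrt L$ controls that on the concentric ball of radius $r/\sqrt L$ together with the relevant derivative norms, with constants depending only on $r$ and the curvature bounds of $\mathbb{S}^2$. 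In the ``only if'' direction the delicate point is verifying that the coherent states genuinely lie in $\cF_L$ (i.e.\ are spanned by eigenfunctions with $l(l+1) < L$) while simultaneously being concentrated on scale $1/\sqrt L$ — the uncertainty principle on $\mathbb{S}^2$ says this is the borderline optimal localization, so the construction must be done carefully, e.g.\ by a stationary-phase analysis of the highest-weight packet or by invoking known sharp concentration estimates for zonal-type combinations of spherical harmonics. Once these two scale-$1/\sqrt L$ ingredients are in place, the passage to the global statement is a routine covering argument.
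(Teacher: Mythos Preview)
The paper does not prove this theorem. It is quoted as Theorem~1 of \cite{Ortega-CerdaP-13} and stated without any argument, so there is no ``paper's own proof'' against which to compare your proposal.

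As for the proposal itself: the overall architecture is the standard one and is essentially what the cited reference does. For sufficiency, the key input is indeed a scale-invariant local inequality saying that a function in $\cF_L$, restricted to a ball of radius $\sim 1/\sqrt{L}$, cannot be too small on a subset of positive relative measure; this is then globalized by a bounded-overlap covering. For necessity, one tests the observability inequality against functions in $\cF_L$ that are maximally concentrated at scale $1/\sqrt{L}$; the cleanest choice is not a hand-built packet from the $Y_{l,l}$ but the reproducing kernel $K_L(z,\cdot)$ of the finite-dimensional space $\cF_L$ (i.e.\ the Schwartz kernel of the spectral projector $\chi_{(-\infty,L)}(-\Delta)$), whose pointwise asymptotics give the required concentration directly and which automatically lies in $\cF_L$. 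Your sketch is on the right track but is vague precisely at the two points you yourself flag: the $L$-independent constant in the local Remez/doubling step, and the verification that the test function both lies in $\cF_L$ and is concentrated on the correct scale. Replacing the ad hoc coherent state by the reproducing kernel removes the second difficulty entirely.
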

Here $\Gamma$ plays the role of a density, while a space scale is provided by $ r/ \sqrt{L}$.
This implies in particular that for \eqref{eq:Ortega-CerdaP-13} to hold, we need that there is $r > 0$ such that for every $L \in \NN$ the complement
$A_L^c$ contains no $r / \sqrt{L}$-balls.
\end{example}
In the next section we will pursue the question which of the properties discussed so far survive
if the class of functions under consideration is not given by a Fourier condition, but by eigenfunctions of Schr\"odinger operators or linear combinations thereof.
This is a natural question, since the expansion in terms of eigenfunctions can be seen as an analogue or generalization of the Fourier transform.
\subsection{Eigenfunctions of Schr\"odinger operators}
\label{ss:eigenfunctions}
Recall that for $L > 0$, we write $\Lambda_L = (-L/2, L/2)^d$.
We assume that $\Lambda \in \{\RR^d , \Lambda_L\}$ and $W \subset \Lambda$ is an equidistributed subset of $\Lambda$.
To be more precise, given $G,\delta > 0$, we say that a sequence $z_j \in \RR^d$, $j \in (G \ZZ)^d$ is \emph{$(G,\delta)$-equidistributed}, if
 \[
  \forall j \in (G \ZZ)^d \colon \quad  B(\x_j , \delta) \subset \Lambda_G + j .
\]
Corresponding to a $(G,\delta)$-equidistributed sequence $z_j$ we define for $L \in G \NN$ the set
\[
W_\delta = \bigcup_{j \in (G \ZZ)^d } B(\x_j , \delta) \cap \Lambda ,
\]
see Fig.~\ref{fig:equidistributed} for an illustration. Note that the set $W_\delta$ depends on $G$ and the choice of the $(G,\delta)$-equidistributed sequence and, if $\Lambda = \Lambda_L$, also on the scale $L$.
\begin{figure}[ht]\centering
\begin{tikzpicture}
\pgfmathsetseed{{\number\pdfrandomseed}}
\foreach \x in {0.5,1.5,...,4.5}{
  \foreach \y in {0.5,1.5,...,4.5}{
    \filldraw[fill=gray!70] (\x+rand*0.35,\y+rand*0.35) circle (0.15cm);
  }
}
\foreach \y in {0,1,2,3,4,5}{
\draw (\y,0) --(\y,5);
\draw (0,\y) --(5,\y);
}

\begin{scope}[xshift=-6cm]
\foreach \x in {0.5,1.5,...,4.5}{
  \foreach \y in {0.5,1.5,...,4.5}{
    \filldraw[fill=gray!70] (\x,\y) circle (1.5mm);
  }
}
\foreach \y in {0,1,2,3,4,5}{
  \draw (\y,0) --(\y,5);
  \draw (0,\y) --(5,\y);
}
\end{scope}
\end{tikzpicture}
\caption{Illustration of $W_\delta$ within the region $\Lambda = \Lambda_5 \subset \mathbb{R}^2$ for periodically (left) and non-periodically (right) arranged balls.\label{fig:equidistributed}}
\end{figure}
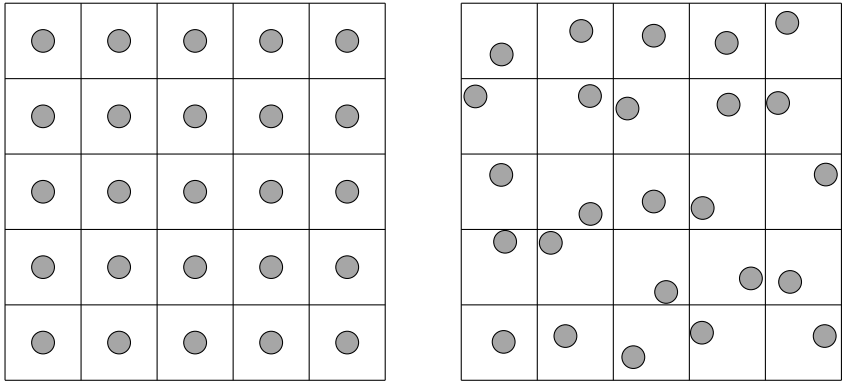
For a bounded and measurable potential $V : \RR^d \to \RR$ we introduce the self-adjoint Schr\"odinger operator $H :=-\Delta + V$ on $\cL^2 (\RR^d)$.
If $\Lambda=\RR^d$ then  $H_\Lambda$ coincides with $H$, if  $\Lambda = \Lambda_L$ for some finite $L$ then
 $H_\Lambda$ denotes the restriction of $-\Delta + V$ to $\cL^2 (\Lambda)$ with  Dirichlet, Neumann, or periodic boundary conditions.
Our aim is to prove $\lVert \psi \rVert_{\cL^2 (W_\delta)} \geq C_{} \lVert \psi \rVert_{\cL^2 (\RR^d)}$ for eigenfunctions $\psi$ of $H_\Lambda$,
with an  explicit and $L$-independent constant $C_{} > 0$. In the one-dimensional situation this problem reduces to an application
of Gronwall's inequality as carried out in \cite{Veselic-96,KirschV-02} for periodically arranged balls on the real line and in \cite{HelmV-07} for balls on metric graphs,
cf.\ Lemma~10 in the preprint \cite{HelmV-06-arxiv} for details.
We restate it here for $(1,\delta)$-equidistributed sequences.
\begin{lemma} \label{lemma:Gronwall}
Let $d = 1$.
For each $\delta \in (0,1/2)$ there is a constant $C_\delta > 0$,
such that for all $L \in 2\NN-1$ and $\Lambda \in \{\RR , \Lambda_L\}$, $V:\RR\to \RR$ measurable and bounded,
all $\psi \in W^{2,2} (\Lambda)$ satisfying $H_\Lambda \psi = E \psi$ for some $E \in \RR$,
all $(1,\delta)$-equidistributed sequences $z_j$, $j \in \ZZ$, and all $k \in \ZZ \cap \Lambda$ we have
\[
 \lVert \psi \rVert_{\cL^2 (B (\delta, z_k))} \geq C_{\mathrm{ucp}} \lVert \psi \rVert_{\cL^2 (\Lambda_1(k))}
 \quad \text{and} \quad
 \lVert \psi \rVert_{\cL^2 (W_\delta)} \geq C_{\mathrm{ucp}} \lVert \psi \rVert_{\cL^2 (\Lambda)} ,
\]
where
\[
 C_{\mathrm{ucp}} = \left (\left \lceil 1/\delta \right\rceil \euler^{ 2C_\delta + 2\lVert V - E \rVert_\infty } \right )^{-1}.
\]
\end{lemma}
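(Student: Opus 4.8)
The plan is to derive the local-to-slightly-larger estimate $\lVert \psi \rVert_{\cL^2(B(z_k,\delta))} \geq C_{\mathrm{ucp}} \lVert \psi \rVert_{\cL^2(\Lambda_1(k))}$ from a Gronwall-type differential inequality for the one-dimensional equation $-\psi'' + V\psi = E\psi$, i.e. $\psi'' = (V-E)\psi$. The key observation is that if one sets $f(x) = \lvert \psi(x) \rvert^2 + \lvert \psi'(x) \rvert^2$, then $f'(x) = 2\Re(\psi'\overline{\psi}) + 2\Re(\psi''\overline{\psi'}) = 2\Re(\psi'\overline{\psi}) + 2\Re((V-E)\psi\overline{\psi'})$, so that $\lvert f'(x) \rvert \leq (1 + \lVert V - E\rVert_\infty)\,(\lvert\psi\rvert^2 + \lvert\psi'\rvert^2)\cdot 2 = 2(1 + \lVert V-E\rVert_\infty) f(x)$. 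Gronwall's inequality then gives, for any $x, y$ in an interval of length $\ell$, the two-sided bound $f(y) \leq \euler^{2(1+\lVert V-E\rVert_\infty)\ell} f(x)$. Integrating this over a subinterval of length $2\delta$ (a coordinate projection of $B(z_k,\delta)$) versus over $\Lambda_1(k)$, and using that $B(z_k,\delta) \subset \Lambda_1(k)$ so the relevant length ratio is controlled by $\lceil 1/\delta\rceil$, produces the claimed constant. One still needs to control $\lVert \psi\rVert$ by $\lVert\psi\rVert + \lVert\psi'\rVert$ and vice versa, but this is exactly what carrying along the auxiliary function $f$ accomplishes, and the elliptic equation lets one trade $\psi''$ for $\psi$.

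\textbf{Step 1.} Fix $k \in \ZZ \cap \Lambda$. Since the sequence is $(1,\delta)$-equidistributed, $B(z_k,\delta)\subset \Lambda_1(k)$, and in one dimension $B(z_k,\delta)$ is an interval of length $2\delta$. Set $f = \lvert\psi\rvert^2 + \lvert\psi'\rvert^2$ on $\Lambda_1(k)$ (using that $\psi\in W^{2,2}$ so $\psi,\psi'$ are absolutely continuous). Derive the differential inequality $\lvert f' \rvert \leq 2C_\delta' f$ with $C_\delta' = C_\delta + \lVert V - E\rVert_\infty$, where $C_\delta$ absorbs the dimensional constant (here $C_\delta = 1$ suffices, but the statement allows a $\delta$-dependent one to accommodate the higher-dimensional reduction and Dirichlet/Neumann bookkeeping).

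\textbf{Step 2.} Apply Gronwall: for all $x,y$ in $\Lambda_1(k)$, $f(x) \leq \euler^{2C_\delta'}f(y)$. Cover $\Lambda_1(k)$ by $\lceil 1/\delta\rceil$ translates of an interval of length $2\delta$; on each translate the integral of $f$ is at most $\euler^{2C_\delta'}$ times the integral of $f$ over $B(z_k,\delta)$ (by the pointwise Gronwall comparison, integrated). Hence $\int_{\Lambda_1(k)} f \leq \lceil 1/\delta\rceil\,\euler^{2C_\delta'}\int_{B(z_k,\delta)} f$. Since $\lVert\psi\rVert_{\cL^2(A)}^2 \leq \int_A f \leq$ (again by Gronwall, comparing $f$ to its value where $\psi$ is largest, or more directly since on $B(z_k,\delta)$ one has $\int f \leq \euler^{2C_\delta'}\cdot(2\delta)\cdot\max$ and similarly below) one concludes $\lVert\psi\rVert_{\cL^2(\Lambda_1(k))}^2 \leq (\lceil 1/\delta\rceil\,\euler^{2C_\delta'})^2 \lVert\psi\rVert_{\cL^2(B(z_k,\delta))}^2$ after also bounding $\int_{B(z_k,\delta)} f$ by $\lVert\psi\rVert_{\cL^2(B(z_k,\delta))}^2$ times a Gronwall factor --- this is the one place where the exponent $2C_\delta' + 2\lVert V-E\rVert_\infty$ (rather than just $C_\delta'$) enters, reflecting that one Gronwall step passes from $\psi$ to $f$ and a second returns from $f$ to $\psi$. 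Taking square roots yields the first inequality with $C_{\mathrm{ucp}}$ as stated.

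\textbf{Step 3.} For the global inequality, sum the squares of the local ones over all $k\in\ZZ\cap\Lambda$: the balls $B(z_k,\delta)$ are pairwise disjoint and contained in $W_\delta$, while the unit cells $\Lambda_1(k)$ tile $\Lambda$ (here the hypothesis $L\in 2\NN-1$ ensures $\Lambda_L$ is exactly covered by such cells centered at integer points). Thus $\lVert\psi\rVert_{\cL^2(W_\delta)}^2 \geq \sum_k \lVert\psi\rVert_{\cL^2(B(z_k,\delta))}^2 \geq C_{\mathrm{ucp}}^2\sum_k\lVert\psi\rVert_{\cL^2(\Lambda_1(k))}^2 = C_{\mathrm{ucp}}^2\lVert\psi\rVert_{\cL^2(\Lambda)}^2$. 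The main obstacle is bookkeeping in Step 2: getting the constant to come out exactly as $(\lceil 1/\delta\rceil\,\euler^{2C_\delta+2\lVert V-E\rVert_\infty})^{-1}$ requires carefully tracking that the Gronwall comparison is invoked twice --- once to pass from the pointwise size of $\psi$ to that of the energy density $f$, and once to redistribute $f$ across the unit cell --- and that the factor $\lceil 1/\delta\rceil$ (not $1/\delta$) is forced by covering a length-one interval with length-$2\delta$ pieces while only claiming a clean count; the boundary-condition cases (Dirichlet, Neumann, periodic) need no extra argument since the equation $\psi''=(V-E)\psi$ and the regularity $\psi\in W^{2,2}(\Lambda)$ are all that the Gronwall argument uses.
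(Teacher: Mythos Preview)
Your approach is correct in spirit and reaches the same conclusion, but it differs from the paper's in the choice of Gronwall function. The paper sets
\[
f_k(x) = \lVert \psi \rVert_{\cL^2(B(x+z_k,\delta))}^2,
\]
a sliding $L^2$ window, and bounds its derivative via $\lvert f_k'(x)\rvert \leq 2\lVert\psi\rVert_{\cL^2(B)}\lVert\psi'\rVert_{\cL^2(B)}$ together with the Sobolev-type estimate $\lVert\psi'\rVert_{\cL^2(B)} \leq (C_\delta + \lVert V-E\rVert_\infty)\lVert\psi\rVert_{\cL^2(B)}$ (which uses $\psi'' = (V-E)\psi$). Gronwall then directly compares $L^2$-norms of $\psi$ over translated $\delta$-intervals, and a covering of $\Lambda_1(k)$ by $\lceil 1/\delta\rceil$ such intervals finishes the job. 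No separate conversion between an auxiliary quantity and $\lVert\psi\rVert_{\cL^2}^2$ is needed.

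Your pointwise energy density $f = \lvert\psi\rvert^2 + \lvert\psi'\rvert^2$ also satisfies a Gronwall inequality, and the covering argument goes through for $\int f$. The loose spot is your Step~2: to recover $\lVert\psi\rVert_{\cL^2(B(z_k,\delta))}^2$ from $\int_{B(z_k,\delta)} f$ you need $\int_{B}\lvert\psi'\rvert^2 \lesssim \int_{B}\lvert\psi\rvert^2$, which is not a ``second Gronwall step'' but precisely the Sobolev interpolation inequality on a $\delta$-interval (again using $\psi''=(V-E)\psi$). Once you say this clearly, your argument closes with a constant of the stated form, since interpolation gives a bound polynomial in $\lVert V-E\rVert_\infty$ with a $\delta$-dependent prefactor, which is dominated by $\euler^{2C_\delta + 2\lVert V-E\rVert_\infty}$ for a suitable $C_\delta$. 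The paper's route is slightly cleaner because the Sobolev step is absorbed into the derivative bound and the quantity being tracked is already the target $L^2$-norm.
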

Thus we are indeed considering inequalities of the type
\eqref{eq:uniform-very-strong-connection-ucp}
as discussed in Remark \ref{r:normalization}.
\begin{proof} For $k \in \Lambda \cap \ZZ$ set $f_k(x) = \lVert \psi \rVert_{\cL^2(B (x+z_k,\delta))}^2 > 0$ whenever $B (x+z_k , \delta) \subset \Lambda$.
By Sobolev norm estimates and the eigenvalue equation there is a $\delta$-dependent constant $C_\delta > 0$ such that
\begin{align*}
 \left\lvert \frac{\partial}{\partial x} f_k(x) \right\rvert
  &\leq 2 \lVert \psi \rVert_{\cL^2(B (x+z_k,\delta))} \lVert \psi' \rVert_{\cL^2(B (x+z_k,\delta))} \\
  &\leq 2 \left[ C_\delta + \lVert V - E \rVert_\infty \right] \lVert \psi \rVert_{\cL^2(B (x+z_k,\delta))}^2
     = 2\left[ C_\delta + \lVert V - E \rVert_\infty \right] f_k(x),
\end{align*}
see \cite{Veselic-96,KirschV-02} for details. Applying Gronwall's lemma, we obtain
\begin{align} \label{eq:gronwall}
 & f_k(x) \leq \euler^{2\left[ C_\delta + \lVert V - E \rVert_\infty \right] \lvert x \rvert} f_k(0)  \nonumber \\[1ex]
 \Leftrightarrow \quad &
 \lVert  \psi \rVert_{\cL^2(B (x+z_k,\delta))}^2 \leq
 \euler^{2\left[ C_\delta + \lVert V - E \rVert_\infty \right] \lvert x \rvert}\lVert \psi \rVert_{\cL^2(B (z_k,\delta))}^2 .
\end{align}
Positioning $x \in (-1,1)$ we cover $\Lambda_1(k)$ by $\left\lceil 1/\delta \right\rceil$ intervals of length $\delta$ and obtain
\[
 \lVert \psi \rVert_{\cL^2(\Lambda_1 (k))}^2
\leq \left\lceil 1/\delta \right\rceil \euler^{ 2C_\delta + 2\lVert V - E \rVert_\infty } \lVert \psi \rVert_{\cL^2(B (z_k , \delta))}^2 ,
\]
which proves the first inequality. The second inequality follows immediately by summing up the disjoint intervals $\Lambda_1(k)$, $k \in \ZZ \cap \Lambda$.
\end{proof}
Note that the constant $C_{\mathrm{ucp}}$ in Lemma~\ref{lemma:Gronwall} is independent of $L$.
For this reason we call an estimate of this type scale-free unique continuation principle.
The drawback of this result is that it is restricted to the one-dimensional situation.
Also, we did not track the explicit $\delta$-dependence.
\par
Now we turn to the multidimensional case. We start by recalling quantitative unique continuation estimates.
The following theorem from \cite{BourgainK-05} may be understood as an analogue of Theorems~\ref{thm:DonnellyF1}, \ref{thm:Kukavica}, and Ineq.~\eqref{eq:gronwall}
for Schr\"odinger operators on $\RR^d$.
\begin{theorem} \label{thm:BourgainK-05}
Let $\gamma, V\colon \RR \to \RR$ be bounded and measurable, and $u \colon \RR \to \CC$ a bounded solution of
 $\Delta u = V u + \gamma$ with $u (0) = 1$. Then there  are constants $c, c' \in (0,\infty)$,
such that  for all $x \in \RR^d$ we have
\begin{equation} \label{eq:BourgainK}
 \max_{\lvert y - x \rvert \leq 1} \lvert u (y) \rvert + \lVert \gamma \rVert_{\infty}
 >
 c \exp \left( -c' \lvert x \rvert^{4/3} \log \lvert x \rvert \right) .
\end{equation}
\end{theorem}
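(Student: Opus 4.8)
\emph{Proof strategy.} The plan is to follow the method of Bourgain and Kenig, which rests on three ingredients: a Carleman estimate for the Laplacian with a carefully chosen radial weight, a resulting \emph{quantitative three-ball interpolation inequality} whose constants are explicit in $\lVert V\rVert_\infty$ and in the radii, and a \emph{propagation of smallness} argument run along a chain of dyadically growing balls. One may assume $R:=\lvert x\rvert$ is large, since for bounded $\lvert x\rvert$ the assertion follows from interior elliptic estimates together with qualitative unique continuation; the constants $c,c'$ are allowed to depend on $d$, $\lVert V\rVert_\infty$ and $\lVert u\rVert_\infty$. First I would dispose of the case $\gamma\equiv0$ and reinstate $\gamma$ at the end, and I would argue by contraposition: assuming $\max_{\lvert y-x\rvert\le1}\lvert u(y)\rvert$ is tiny, I want to contradict $u(0)=1$.

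\emph{Step 1 (Carleman $\Rightarrow$ three-ball).} The starting point is an estimate of the shape: there are $C=C(d)$ and a radial weight $w$ with $w(y)\asymp\lvert y-y_0\rvert$ on an annulus such that, for all sufficiently large $\tau$ and all $\psi\in C_0^\infty$ supported in $\{\rho_1<\lvert y-y_0\rvert<\rho_3\}$,
\[
 \tau^3\int w^{-1-2\tau}\lvert\psi\rvert^2\,\drm y\;\le\;C\int w^{2-2\tau}\lvert\Delta\psi\rvert^2\,\drm y.
\]
Inserting $\psi=\chi u$ with a cutoff $\chi$ equal to $1$ on a sub-annulus, the commutator $[\Delta,\chi]u$ is supported in two thin shells, producing the "inner" and "outer" contributions; on the bulk $\Delta u=Vu$, so $\int w^{2-2\tau}\lvert Vu\rvert^2\le\lVert V\rVert_\infty^2\int w^{2-2\tau}\lvert u\rvert^2$, which the left-hand side absorbs \emph{precisely when $\tau^3$ exceeds a fixed multiple of $\lVert V\rVert_\infty^2$ times the appropriate power of the scale}. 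Hence one is stuck with a finite parameter $\tau$, and rearranging yields a three-ball inequality
\[
 \lVert u\rVert_{\cL^2(B(y_0,\rho_2))}\;\le\;\euler^{c\tau}\,\lVert u\rVert_{\cL^2(B(y_0,\rho_1))}^{\theta}\,\lVert u\rVert_{\cL^2(B(y_0,\rho_3))}^{1-\theta}
\]
with $\theta\in(0,1)$ depending only on the ratios of the radii; for $V\equiv0$ one could let $\tau\to\infty$ and the multiplicative constant would tend to $1$, so it is exactly the obstruction $\tau^3\gtrsim\lVert V\rVert_\infty^2$ that leaves the loss $\euler^{c\tau}$.

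\emph{Step 2 (iteration over scales).} Rescaling the three-ball inequality to the triple $B(x,2^j)\subset B(x,2^{j+1})\subset B(x,2^{j+2})$ turns $\Delta u=Vu$ into an equation whose effective potential has sup-norm $\asymp 2^{2j}\lVert V\rVert_\infty$, hence forces $\tau_j\asymp 2^{4j/3}\lVert V\rVert_\infty^{2/3}$ and a loss $\euler^{c\tau_j}$ at that scale. If $\max_{\lvert y-x\rvert\le1}\lvert u\rvert$ is tiny, the inequality at $j=0$ makes $\lVert u\rVert_{\cL^2(B(x,2))}$ tiny (using $\lVert u\rVert_{\cL^2(B(x,4))}\le\lVert u\rVert_\infty$); feeding this into $j=1$, and so on, smallness propagates to $B(x,2^k)$, which contains the origin once $2^k\gtrsim R$, i.e.\ after $k\asymp\log R$ steps. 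Tracking the recursion $b_{j+1}\le c\tau_j+\theta\,b_j$ for $b_j:=\log(\lVert u\rVert_{\cL^2(B(x,2^j))}/\lVert u\rVert_\infty)$, the accumulated loss is governed by $\sum_{j\le\log_2 R}\tau_j\asymp R^{4/3}$ (a geometric sum dominated by its top term). An interior estimate converts $\cL^2$-smallness near $0$ into smallness of $u(0)$, contradicting $u(0)=1$ unless $\max_{\lvert y-x\rvert\le1}\lvert u\rvert\gtrsim\euler^{-c'R^{4/3}\log R}$. When $\gamma\not\equiv0$, the bulk analysis of Step 1 produces an extra term $C\lVert\gamma\rVert_\infty$ on the right of the three-ball inequality, which survives the iteration and shows up as the additive $\lVert\gamma\rVert_\infty$ in \eqref{eq:BourgainK}.

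\emph{Main obstacle.} I expect the technical heart to be twofold: (a) choosing the Carleman weight so that it simultaneously has non-vanishing gradient, delivers the $\tau^3$-gain, and is adapted to the geometry at every scale (Bourgain and Kenig use a specific convex modification of $\lvert y-y_0\rvert$, not $\lvert y-y_0\rvert$ itself); and (b) the bookkeeping of the dyadic iteration — the exact powers of the scales entering $\theta$, the Carleman constant, and the number of balls needed to tile each shell — which is what turns the heuristic exponent $R^{4/3}$ into the stated $R^{4/3}\log R$. For the full details I would refer to \cite{BourgainK-05}.
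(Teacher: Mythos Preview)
Your Step~1 is essentially correct and matches the Carleman estimate~\eqref{eq:CarlemanBK} that the paper quotes. The gap is in Step~2. Iterating a three-ball inequality with a \emph{fixed} H\"older exponent $\theta\in(0,1)$ over $k\asymp\log_2 R$ dyadic scales degrades the initial smallness by a factor $\theta^k\asymp R^{-\beta}$ with $\beta=\log_2(1/\theta)>0$. Concretely, your recursion $b_{j+1}\le c\tau_j+\theta\,b_j$ unwinds to
\[
 b_k\;\le\;\theta^{k}\, b_0\;+\;\sum_{j<k}\theta^{\,k-1-j}\,c\tau_j\;\lesssim\;R^{-\beta}\,b_0\;+\;C\,R^{4/3},
\]
and since $b_k$ is bounded below (because $0\in B(x,2^k)$ and $u(0)=1$) this only forces $\lvert b_0\rvert\lesssim R^{4/3+\beta}$, i.e.\ $\max_{B(x,1)}\lvert u\rvert\gtrsim\exp(-c\,R^{4/3+\beta})$. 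That is strictly weaker than the stated $\exp(-c'R^{4/3}\log R)$, and as Remark~\ref{rem:BourgainK-05}(iii) stresses, already replacing the exponent $4/3$ by $1.35$ would destroy the application to the Anderson--Bernoulli model, so this loss is not cosmetic. Your claim that ``the accumulated loss is governed by $\sum_{j}\tau_j\asymp R^{4/3}$'' accounts only for the inhomogeneous part of the recursion and overlooks the $\theta^k$ damping of $b_0$; the extra $\log R$ you expect to recover from ``bookkeeping'' is not what the iteration produces.

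The paper does not give a self-contained proof of this theorem but points to~\eqref{eq:CarlemanBK} and then sketches, for the closely related Theorem~\ref{thm:BourgainK-13}, the mechanism actually used in \cite{BourgainK-05}: a \emph{single} application of the Carleman inequality at the large scale $\rho\asymp R$, with the inner cutoff shell at radius $\asymp 1$ around $x$ and the middle annulus containing the origin. Absorbing the potential over the whole annulus forces $\alpha^3\gtrsim\rho^4\lVert V\rVert_\infty^2$, i.e.\ $\alpha\asymp R^{4/3}\lVert V\rVert_\infty^{2/3}$, and comparing the weights on the inner shell and on $\Theta\ni 0$ then yields the factor $(1/R)^{C\alpha}=\exp\bigl(-C\,R^{4/3}\log R\bigr)$ in one shot, with no iteration loss. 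So the fix is to replace your chain of dyadic three-ball steps by one three-annuli estimate at the full scale $R$, exactly as in the sketch following Theorem~\ref{thm:BourgainK-13}.
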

The proof is based on following Carleman estimate, see \cite{EscauriazaV-03,BourgainK-05}.
 \begin{theorem}
There are $\alpha_0,C > 1$ such that for all $\rho > 0$ there is $w_\rho: \mathbb{R}^d \to \mathbb{R}$
s.t. for all $\alpha \geq \alpha_0$ and $u \in W^{2,2} (\RR^d)$ with support in $B(\rho) \setminus \{0\}$ we have
   \begin{equation} \label{eq:CarlemanBK}
    \alpha^3 \int_{\mathbb{R}^d} w_\rho^{-1-2\alpha} u^2 \leq C_1 \rho^4 \int_{\mathbb{R}^d} w_\rho^{2-2\alpha} (\Delta u)^2
    \quad \text{and} \quad
    \frac{\lvert x \rvert}{\euler \rho} \leq w_\rho(x) \leq \frac{\lvert x \rvert}{\rho} .
  \end{equation}
  \end{theorem}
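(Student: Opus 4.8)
The plan is to prove the estimate by the classical recipe for Carleman inequalities with a radial weight that is singular at the origin: reduce to unit scale, pass to logarithmic polar coordinates, decompose into spherical harmonics, and run a positive–commutator (\emph{pseudoconvexity}) argument in the resulting one–dimensional problems, the weight profile $\sigma$ being engineered precisely so that the commutator is positive and of size $\alpha^3$.

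First I would scale out $\rho$. Writing $v(y) = u(\rho y)$, the function $v$ is supported in $B(1)\setminus\{0\}$, one has $\Delta v(y) = \rho^2 (\Delta u)(\rho y)$, and with $w_\rho(x) = w_1(x/\rho)$ — so that $\lvert y\rvert/\euler \le w_1(y) \le \lvert y\rvert$ — both sides scale by $\rho^d$ while the extra factor $\rho^4$ in front of the right-hand side accounts exactly for $(\Delta v)^2 = \rho^4 (\Delta u)^2$. Hence it suffices to prove $\alpha^3 \int w^{-1-2\alpha} v^2 \le C_1 \int w^{2-2\alpha}(\Delta v)^2$ at scale one. Next, in logarithmic polar coordinates $y = \euler^{-t}\omega$ with $t \in (0,\infty)$ and $\omega \in \mathbb{S}^{d-1}$, one has $\Delta = \euler^{2t}\bigl(\partial_t^2 - (d-2)\partial_t + \Delta_{\mathbb{S}^{d-1}}\bigr)$ and $\drm y = \euler^{-dt}\,\drm t\,\drm\omega$. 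After absorbing fixed powers of $\euler^{t}$ and conjugating by the weight — i.e.\ setting $f = w^{-\alpha}v$ transported to $(t,\omega)$, with $w = \sigma(\lvert y\rvert)$ and $\sigma(s)/s \in [\euler^{-1},1]$ — the inequality becomes a coercivity estimate $\alpha^3 \lVert f\rVert^2 \lesssim \lVert P_\alpha f\rVert^2$ for a second–order operator $P_\alpha = \partial_t^2 + \Delta_{\mathbb{S}^{d-1}} + (\text{lower order in }\partial_t,\ \text{linear in }\alpha)$.

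Expanding $v = \sum_k f_k(t) Y_k(\omega)$ in spherical harmonics, with $-\Delta_{\mathbb{S}^{d-1}} Y_k = \lambda_k Y_k$ and $\lambda_k = k(k+d-2)$, decouples the estimate into a family of one–dimensional estimates, one for each $\lambda_k$. In each of them I would split $P_\alpha = S + A$ into its formally symmetric and skew parts, integrate by parts to get $\lVert P_\alpha f_k\rVert^2 = \lVert S f_k\rVert^2 + \lVert A f_k\rVert^2 + \langle [S,A] f_k, f_k\rangle$ (with no boundary terms, since $f_k$ is supported in a compact subset of $(0,\infty)$), and choose the profile $\sigma$ so that the commutator $[S,A]$ is a positive multiple of the identity of size $\gtrsim \alpha^3$ once the weight $w^{-1-2\alpha}$ is taken into account. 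This pseudoconvexity requirement on $\sigma$ — convexity of an associated function of $\log s$ along the conjugated flow — is exactly what forces, and is satisfied within, the window $\lvert x\rvert/\euler \le w_\rho(x) \le \lvert x\rvert$.

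The main obstacle is this last step: choosing $\sigma$ and verifying the commutator positivity uniformly in both $\alpha \ge \alpha_0$ and the angular eigenvalue $\lambda_k$, so that the lower–order contributions (from the drift $-(d-2)\partial_t$, from the $\lambda_k$–terms, and from the discrepancy between $\sigma(s)$ and $s$) never consume the $\alpha^3$ gain. In practice one splits into the regimes $\lambda_k \lesssim \alpha^2$ and $\lambda_k \gtrsim \alpha^2$: in the first the $\partial_t^2$–direction drives the estimate, in the second the angular term does, and in both the weight is tuned so that the cross terms stay subordinate. Once the modewise estimates hold with an $\alpha$– and $k$–independent constant, summing over $k$, undoing the conjugation and the logarithmic change of variables, and reinstating $\rho$ yields the claim; the two–sided bound on $w_\rho$ is then read off directly from $\sigma(s)/s \in [\euler^{-1},1]$.
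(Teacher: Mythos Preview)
The paper does not supply its own proof of this Carleman estimate; it is quoted as a known result with references to \cite{EscauriazaV-03,BourgainK-05}. What the paper \emph{does} add, in the remark immediately following the statement, is the explicit weight: $w_\rho(x)=\varphi(\lvert x\rvert/\rho)$ with
\[
\varphi(s)=s\cdot\exp\Bigl(-\int_0^s \frac{1-\euler^{-t}}{t}\,\drm t\Bigr),
\]
from which the two-sided bound $\lvert x\rvert/(\euler\rho)\le w_\rho(x)\le \lvert x\rvert/\rho$ is read off directly. So there is nothing in the paper to compare your argument against line by line.

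That said, your outline is a legitimate route to such an estimate, and it is worth noting how it relates to the proofs in the cited sources. Escauriaza--Vessella (and Bourgain--Kenig following them) do \emph{not} decompose into spherical harmonics; they work directly in physical coordinates, conjugate $\Delta$ by $w_\rho^{-\alpha}$, and perform a careful integration by parts in which the specific profile $\varphi$ above is chosen so that a certain first-order coefficient becomes exactly $(1-\euler^{-t})$, making the cross terms manifestly nonnegative and producing the $\alpha^3$ gain without any case splitting in angular frequency. Your approach---logarithmic polar coordinates, mode-by-mode commutator positivity, and a dichotomy $\lambda_k\lesssim\alpha^2$ versus $\lambda_k\gtrsim\alpha^2$---is the alternative ``ODE'' strategy; it works, but it trades the algebraic miracle of the explicit $\varphi$ for a more structural argument in which the weight is only specified up to the pseudoconvexity constraint. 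Either method yields the theorem; the paper's remark singles out the explicit weight precisely because the direct integration-by-parts proof hinges on it.
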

\begin{remark} \label{rem:BourgainK-05}
\begin{enumerate}[(i)]
 \item
 In fact, one can choose for $\rho >0$
 \begin{align*}
\varphi &\colon [0,\infty) \to [0,\infty),
&\varphi(s)&:=  s\cdot\exp \left(-\int_0^s \frac {1 - \rm e^{-t}} t \, d t\right)
\\
w_\rho&\colon \RR^d\to [0,\infty),
&w_\rho(x)&:= \varphi(\lvert x \rvert / \rho).
\end{align*}
Then $\varphi$ is a strictly increasing continuous function, on $(0, \infty)$ even smooth,
and
\begin{equation*}
\label{eq:carleman_condition}
\frac{\lvert x \rvert}{\rm e  \rho}  \leq w_\rho(x) \le \frac{\lvert x \rvert}{ \rho}
\quad\text{ for all } x \in B(0,\rho)
\end{equation*}
 \item
 The particular feature of this Carleman estimate is that the weight function is not exponential as, e.g., in Ineq.~\eqref{eq:CarlemanKRS}.
 Furthermore, the particular scaling with $\alpha$ is crucial to obtain the exponent $4/3$ in Ineq.~\eqref{eq:BourgainK}.
 \item Theorem~\ref{thm:BourgainK-05} was a crucial step for the answer on a long-standing problem in the theory of random Schr\"odinger operators, namely
 Anderson localization for the continuum Anderson model with Bernoulli-distributed coupling constants. Let us emphasize that the precise decay rate in Ineq.~\eqref{eq:BourgainK} was essential for this application.
 If, instead of Ineq.~\eqref{eq:BourgainK}, one would have at disposal only a slightly weaker version, where the exponent
 $4/3$ would be replaced by 1.35, one could not conclude localization for the continuum Anderson-Bernoulli model using the same techniques,
 cf.~\cite[p.~412]{BourgainK-05}.
\end{enumerate}
\end{remark}
There are local $\cL^2$-variants of Theorem~\ref{thm:BourgainK-05}, see \cite{GerminetK-13b, BourgainK-13, Rojas-MolinaV-13}.
As an exemple, we formulate Theorem~3.4 of \cite{BourgainK-13}.
\begin{theorem} \label{thm:BourgainK-13}
 Let $\Lambda \subset \RR^d$ be an open subset of $\RR^d$ and consider a real measurable and bounded function $V$ on $\Lambda$.
 Let $\psi \in W^{2,2}(\Lambda)$ be real-valued and $\zeta \in \cL^2(\Lambda)$ be defined by $-\Delta \psi + V \psi = \zeta$ almost everywhere on $\Lambda$.
 Let $\Theta \subset \Lambda$ be a bounded and measurable set where $\lVert \psi \rVert_{\cL^2(\Theta)} > 0$. Set
 \[
  {Q}(x, \Theta) := \sup_{y \in \Theta} \lvert y - x \rvert \quad \text{for}\ x \in \Lambda.
 \]
 Consider $x_0 \in \Lambda \setminus \overline\Theta$ such that ${Q} = {Q}(x_0, \Theta) \geq 1$,
$\operatorname{dist}(x_0, \Theta) >0$, and $B(x_0, 6 {Q} + 2 ) \subset \Lambda$. Then given $0 < \delta \leq \min \{ \operatorname{dist}(x_0, \Theta), 1/24 \}$, we have
 \begin{equation} \label{eq:BourgainKlein}
  \left( \frac{\delta}{{Q}} \right)^{K \bigl(1 + \lVert V \rVert_\infty^{2/3} \bigr) \left({Q}^{4/3} + \log \frac{ \lVert \psi \rVert_{\cL^2(\Lambda)}}{\lVert \psi \rVert_{\cL^2(\Theta)}} \right) }
  \lVert \psi \rVert_{\cL^2(\Theta)}^2
  \leq
  \lVert \psi \rVert_{\cL^2(B(x_0, \delta))}^2 + \delta^2 \lVert \zeta \rVert_{\cL^2(\Lambda)}^2 ,
 \end{equation}
where $K > 0$ is a constant depending only on $d$.
\end{theorem}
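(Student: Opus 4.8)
The plan is to deduce the local $\cL^2$-estimate from the Carleman inequality \eqref{eq:CarlemanBK} by a three-ball type argument combined with a geometric chain (propagation of smallness) that transports mass from $\Theta$ to the small ball $B(x_0,\delta)$. First I would set up the scales: since $Q = Q(x_0,\Theta) \geq 1$, all of $\Theta$ lies in $B(x_0, Q)$, and the hypothesis $B(x_0, 6Q+2) \subset \Lambda$ leaves enough room to apply interior estimates on balls of radius comparable to $Q$ without touching $\partial\Lambda$. I would introduce a smooth cutoff $\eta$ supported in an annulus $B(x_0,\rho)\setminus B(x_0,r)$ for suitable radii $r \ll \delta$ and $\rho \asymp Q$ (the inner hole is needed because the weight $w_\rho$ in \eqref{eq:CarlemanBK} degenerates at the center), apply \eqref{eq:CarlemanBK} to $u = \eta\psi$, and expand $\Delta(\eta\psi) = \eta\Delta\psi + 2\nabla\eta\cdot\nabla\psi + \psi\Delta\eta$. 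Using the equation $\Delta\psi = V\psi - \zeta$ one converts the $\eta\Delta\psi$ term into $\eta(V\psi-\zeta)$; the $\|V\|_\infty$ contribution is absorbed into the left side for $\alpha$ large (this is where the factor $1+\|V\|_\infty^{2/3}$ and the power of $\alpha^3$ enter), while the commutator terms $\nabla\eta\cdot\nabla\psi$ and $\psi\Delta\eta$ are supported on the two thin shells where $\eta$ transitions, and $\zeta$ produces the additive $\delta^2\|\zeta\|^2$ term after rescaling.

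The heart of the matter is then to choose the shell radii and the parameter $\alpha$ so that, after using the two-sided bound $|x|/(\mathrm{e}\rho) \leq w_\rho(x) \leq |x|/\rho$ to replace $w_\rho$ by explicit powers of $|x-x_0|/\rho$, the inequality reads schematically
\[
 \Big(\tfrac{r}{\rho}\Big)^{C\alpha}\|\psi\|_{\cL^2(B(x_0,r))}^2 \ \lesssim\ \Big(\tfrac{\rho_{\mathrm{out}}}{\rho}\Big)^{-C\alpha}\Big(\|\psi\|_{\cL^2(\text{outer shell})}^2\Big) + (\text{inner shell}) + (\text{$\zeta$ term}).
\]
Optimizing over $\alpha \geq \alpha_0$ — balancing the gain $(\rho_{\mathrm{out}}/\rho)^{-C\alpha}$ against the loss $(r/\rho)^{C\alpha}$ — turns the estimate into an interpolation (Hadamard-three-ball) inequality of the form $\|\psi\|_{B(x_0,r)} \leq \|\psi\|_{B(x_0,\rho_{\mathrm{out}})}^{1-\theta}\|\psi\|_{\text{small}}^{\theta}$ with an explicit exponent $\theta$ depending on the ratio of logarithms of the radii. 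One then iterates this along a chain of overlapping balls of comparable radius $\asymp 1$ connecting a point near $\Theta$ to $x_0$; the number of balls in the chain is $O(Q)$, but because at each step the curvature/geometry is flat (we are in $\RR^d$) and the radii are bounded, the composition of the exponents produces the stated power $(\delta/Q)^{K(1+\|V\|_\infty^{2/3})(Q^{4/3} + \log(\|\psi\|_{\Lambda}/\|\psi\|_\Theta))}$. The $Q^{4/3}$ is exactly the exponent from Theorem~\ref{thm:BourgainK-05}: it comes from optimizing the $\alpha$-dependence in \eqref{eq:CarlemanBK} over the chain, precisely as in \cite{BourgainK-05,EscauriazaV-03}, and the additive $\log(\|\psi\|_\Lambda/\|\psi\|_\Theta)$ term accounts for the ``normalization'' of $\psi$ on $\Theta$ versus on all of $\Lambda$ — it is the analogue of dividing by $u(0)=1$ in Theorem~\ref{thm:BourgainK-05}.

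The main obstacle I anticipate is bookkeeping of constants through the iteration: one must ensure that the constant $K$ depends only on $d$ and not on $Q$, $\delta$, $\|V\|_\infty$, or the position of $\Theta$. This requires that at every link of the chain the same Carleman estimate with the \emph{same} $\alpha_0, C$ applies (guaranteed by \eqref{eq:CarlemanBK} being scale-free up to the explicit $\rho^4$) and that the overlaps of consecutive balls are uniformly controlled, so that the product of $O(Q)$ interpolation exponents telescopes cleanly into a single exponential with linear-in-$Q$ (after the $\alpha$-optimization, $Q^{4/3}$) growth. A secondary technical point is handling the inner hole: the shell radius $r$ near $x_0$ must be taken $\asymp\delta$ so that the final left-hand side is genuinely $\|\psi\|_{\cL^2(B(x_0,\delta))}^2$, and the constraint $\delta \leq \min\{\operatorname{dist}(x_0,\Theta), 1/24\}$ is exactly what makes the innermost annulus fit inside $\Lambda$ and disjoint from $\Theta$. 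Once these geometric and constant-tracking issues are settled, the estimate \eqref{eq:BourgainKlein} follows by collecting terms and absorbing the shell contributions via the three-ball interpolation into the two surviving terms on the right.
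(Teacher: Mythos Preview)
The first half of your proposal---applying the Carleman estimate \eqref{eq:CarlemanBK} once to $u=\eta\psi$ on a large annulus with outer radius $\rho\asymp Q$ and inner radius $\asymp\delta$, expanding $\Delta(\eta\psi)$, and absorbing the $\lVert V\rVert_\infty$ contribution for large $\alpha$---is exactly what the paper does. But the subsequent chain-of-balls iteration is a wrong turn, and your justification for the exponent $Q^{4/3}$ via the chain is incorrect.

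The paper does \emph{not} iterate. A single application of \eqref{eq:CarlemanBK} with $\rho=2\mathrm{e}Q+2$ already connects the inner shell $A_1\subset B(x_0,\delta)$, the middle annulus $A_2\supset\Theta$, and the outer shell $A_3$ at radius $\asymp Q$. One then imposes two lower bounds on $\alpha$: first $\alpha^3\gtrsim \rho^4(1+\lVert V\rVert_\infty^2)\asymp Q^4(1+\lVert V\rVert_\infty^2)$, needed to absorb the $V$-term on $A_2$ into the left side (this is the origin of $Q^{4/3}(1+\lVert V\rVert_\infty^{2/3})$); and second $\alpha^3 2^{2\alpha}\gtrsim \lVert\psi\rVert_{\cL^2(\Lambda)}^2/\lVert\psi\rVert_{\cL^2(\Theta)}^2$, needed to absorb the outer-shell $A_3$ contribution (this is the origin of the additive $\log(\lVert\psi\rVert_{\cL^2(\Lambda)}/\lVert\psi\rVert_{\cL^2(\Theta)})$). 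With $\alpha$ chosen to satisfy both, the surviving comparison of weight values $w_\rho^{2-2\alpha}$ on $A_1$ versus on $\Theta\subset B(x_0,Q)$ gives directly the factor $(\delta/Q)^{C\alpha}$, and collecting constants yields \eqref{eq:BourgainKlein}. There is no interpolation-and-optimize step in the Hadamard sense, and no propagation along a chain.

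By contrast, a chain of $O(Q)$ overlapping unit balls with a fixed interpolation exponent $\theta\in(0,1)$ at each link composes to an overall exponent $\theta^{O(Q)}$, which translates into a bound of the form $(\delta/Q)^{C^{Q}}$---exponential in $Q$, strictly worse than $Q^{4/3}$. The polynomial $4/3$ is a feature of the \emph{global} weight and the specific $\alpha^3$ versus $\rho^4$ scaling in \eqref{eq:CarlemanBK}; it does not survive an iterated local argument. So drop the chain entirely and keep the single three-annuli estimate; the remaining ingredients you list (Caccioppoli on the shells to control $\lvert\nabla\psi\rvert^2$, the $\zeta$-term surviving as $\delta^2\lVert\zeta\rVert_{\cL^2(\Lambda)}^2$) are correct.
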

\begin{remark}
In the case $\zeta=0$ inequality \eqref{eq:BourgainKlein} estimates the quotient
\[
\frac{\lVert \psi \rVert_{\cL^2(\Theta)}}{\lVert \psi \rVert_{\cL^2(B(x_0, \delta))} }
\]
of two local $\cL^2$-norms in terms of another such quotient
\[
\frac{\lVert \psi \rVert_{\cL^2(\Lambda)}}{\lVert \psi \rVert_{\cL^2(\Theta)}}
\]
If an estimate on the latter is not provided a-priori, one might wonder, whether one is running in a vicious circle or
an induction without induction anchor. Indeed, for many applications the bound in Theorem \ref{thm:BourgainK-13},
and likewise the corresponding estimates in \cite{GerminetK-13b, Rojas-MolinaV-13}, need to be complemented by some other information.
This is quite analogous with the situation encountered in Example \ref{ex:powers} and Corollary \ref{cor:growth-vs-vanishing}.
Only when we are supplied with some estimate which controls the global growth of the function $f_k$,
we can say at what fastest rate it can vanish at the origin.
\end{remark}
\begin{remark}
Theorem~\ref{thm:BourgainK-13} is applied in \cite{BourgainK-13} to obtain bounds on the density of states outer measure
for Schr\"odinger operators in dimension $d \in \{1,2,3\}$.
The restriction on the dimension stems from the decay rate $4/3$ in Theorem~\ref{thm:BourgainK-13} and would be lifted if
the inequality \eqref{eq:BourgainKlein} would be at disposal with $Q^{4/3}$  replaced by $Q$.
However, in the case of complex-valued potentials Meshkov's example \cite{Meshkov-92} shows that it is not possible to improve the exponent $4/3$.
The example of Meshkov does not apply to real valued potentials.
However, at the moment it is not known how to exploit this additional property of the potential in order to obtain
improved quantitative unique countinuation estimates.
In particular,  an improvement of \eqref{eq:BourgainKlein}  must be based on some method different from Carleman estimates.
\end{remark}
Let us sketch the basic ideas of the proof of Theorem~\ref{thm:BourgainK-13} using the Carleman estimate~\eqref{eq:CarlemanBK}.
\begin{proof}[Sketch of proof of Theorem~\ref{thm:BourgainK-13}]
For simplicity, we restrict ourselves to the special case $\zeta \equiv 0$, $\Lambda = \RR^d$ and $x_0 = 0$. We cannot apply Ineq.~\eqref{eq:CarlemanBK} to $\psi$ directly, since $\psi$ is not supported in $B(r) \setminus \{0\}$ for some $r > 0$.
Therefore it is natural that a cut off function comes into play. We choose three annuli
\[
 A_1 = B (3\delta / 4) \setminus B (\delta / 4), \quad
 A_2 = B (2\euler Q) \setminus B (3\delta / 4) \quad\text{and}\quad
 A_3 = B (2\euler Q + 1) \setminus B (2 \euler Q) .
\]
and a cutoff function $\eta \in C_0^\infty (\RR^d;[0,1])$ as illustrated in Fig.~\ref{fig:cutoff},
with support in $B (2\euler Q + 1) \setminus B (\delta / 4)$ and the properties that
\begin{equation} \label{eq:ass2}
\begin{cases}
\max\{ \lvert \nabla \eta \rvert , \lvert \Delta \eta \rvert \} \leq \tilde\Theta_1 / \delta^2 =: \Theta_1 & \text{on}\ A_1, \\
\eta \equiv 1 & \text{on}\ A_2, \\
\max\{ \lvert \nabla \eta \rvert , \lvert \Delta \eta \rvert \} \leq \Theta_2 & \text{on}\ A_3 ,
\end{cases}
\end{equation}
for some constants $\tilde\Theta_1,\Theta_2 > 0$ which depend only on the dimension.
\begin{figure}[t]\centering
\begin{tikzpicture}[yscale=1.3,xscale=1.3]
 \draw[-latex] (-5,0)--(5,0);
 \draw[-latex] (0,-0.1)--(0,1.2);
 \draw[rounded corners=7pt]
 (-5,0)--(-4.3,0)--(-4.15,1)--(-0.63,1)--(-0.39,0)--(0.39,0)--(0.63,1)--(4.15,1)--(4.3,0)--(5,0);
 \draw[dotted, thick] (-0.25,-0)--(-0.25,1);
 \draw[dotted, thick] (0.25,-0)--(0.25,1);
 \draw (0.25,-0.4) node {$\frac{\delta}{4}$};
 \draw (-0.25,-0.4) node {$-\frac{\delta}{4}$};
 \draw[dotted, thick] (-0.75,-0)--(-0.75,1);
 \draw[dotted, thick] (0.75,-0)--(0.75,1);
 \draw (0.75,-0.4) node {$\frac{3\delta}{4}$};
 \draw (-0.75,-0.4) node {$-\frac{3\delta}{4}$};
 \draw[dotted, thick] (-2,-0)--(-2,1);
 \draw[dotted, thick] (2,-0)--(2,1);
 \draw (2,-0.4) node {$Q$};
 \draw (-2,-0.4) node {$-Q$};
 \draw[dotted, thick] (-4,-0)--(-4,1);
 \draw[dotted, thick] (4,-0)--(4,1);
 \draw (4,-0.2) node {$2eQ$};
 \draw (-4,-0.2) node {$-2eQ$};
 \draw[dotted, thick] (-4.5,-0)--(-4.5,1);
 \draw[dotted, thick] (4.5,-0)--(4.5,1);
 \draw (4.5,-0.5) node {$2eQ + 1$};
 \draw (-4.5,-0.5) node {$-2eQ + 1$};
\begin{scope}[yshift=-0.5cm]
 \draw (0.25,2) arc (0:180:0.25);
 \draw (0.75,2) arc (0:180:0.75);
 \draw[thin, dashed] (2,2) arc (0:180:2);
 \draw (4,2) arc (0:180:4);
 \draw (4.5,2) arc (0:180:4.5);
 \draw (0,2.45) node {$A_1$};
 \draw (0,4.75) node {$A_2$};
 \draw (0,6.25) node {$A_3$};
 \filldraw[fill = black!20!white] (1,2.3) rectangle (1.72,3.02);
 \draw (1.36, 2.66) node {$\Theta$};
\end{scope}
\end{tikzpicture}
\caption{Cutoff function $\eta$, annuli $A_1$, $A_2$, $A_3$ and the set $\Theta$\label{fig:cutoff}}
\end{figure}
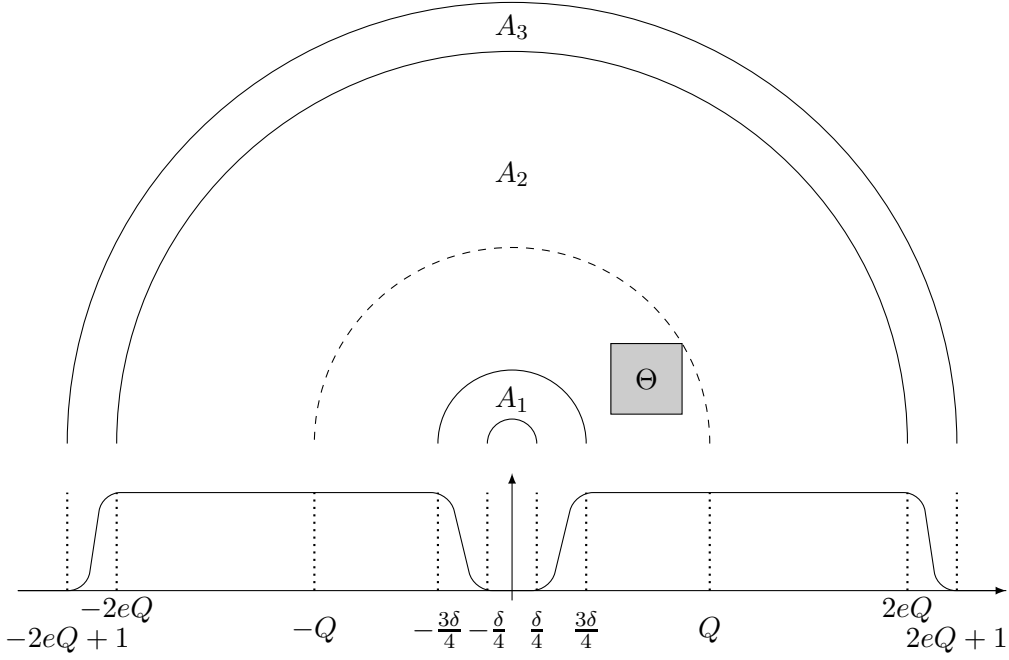
Note that by construction $\Theta \subset A_2 \cap B (Q)$.
Now we can apply Ineq.~\eqref{eq:CarlemanBK} with $\rho = 2\euler Q + 2$ to the function $u = \eta \psi$
and obtain using the product rule and $(a+b+c)^2 \leq 3 (a^2+b^2+c^2)$  and $\lvert \Delta \psi \rvert = \lvert V \psi \rvert$ that
\begin{multline*}
    \alpha^3 \int_{A_2} w_\rho^{-1-2\alpha} \psi^2
    \leq
    C_1 \rho^4 \int_{\mathbb{R}^d} w_\rho^{2-2\alpha} (\psi\Delta\eta + \eta\Delta \psi + 2 \left(\nabla \eta\right)^{\mathrm T} \nabla \psi)^2 \\
    \leq
    3 C_1 \rho^4 \left(\int_{A_1} + \int_{A_2} + \int_{A_3} \right) w_\rho^{2-2\alpha} (\psi^2 \lvert \Delta\eta \rvert^2 + \eta^2 \lVert V \rVert_\infty^2 \lvert  \psi \rvert^2 + 2 \lvert \nabla \eta \rvert^2 \lvert \nabla \psi \rvert^2) .
  \end{multline*}
  Since $w_\rho^{-1} \geq 1$ on $A_2$ we can replace the weight function on the left hand side by $w_\rho^{2-2\alpha}$.
  For the three integrals $\int_{A_i}$, $i \in \{1,2,3\}$, on the right hand side we proceed as follows.
  Since $\nabla \eta = \Delta \eta \equiv 0$ and $\eta \equiv 1$ on $A_2$ ,
  we can subsume the second integral on the right hand side into the left hand side by choosing $\alpha$ sufficiently large.
  For the first and the third integral we use our bound \eqref{eq:ass2} on the cutoff function and a Cacciopoli inequality
  to estimate $\int \lvert \nabla \psi \rvert^2$ by a constant (depending on $\delta$ and $\lVert V \rVert_\infty$) times $\int \lvert \psi \rvert^2$, see e.g.\ \cite{BourgainK-13} for details.
  Putting everything together we obtain
  \begin{equation} \label{eq:3annuli}
   \alpha^3 \int_{A_2}  w_\rho^{2-2\alpha} \psi^2 \lesssim \int_{A_1}  w_\rho^{2-2\alpha} \psi^2 + \int_{A_3} w_\rho^{2-2\alpha} \psi^2 ,
  \end{equation}
  up to a  multiplicative constant depending on $\delta$, $Q$, $\rho$, $\lVert V \rVert_\infty$, $\Theta_1$ and $\Theta_2$.
  Now we use that $\Theta \subset A_2 \cap B (Q)$, $A_1 \subset B(\delta)$ and our bounds on the weight function $(\rho / \lvert x \rvert)^{2\alpha - 2}  \leq w_\rho^{2-2\alpha}(x) \leq (\euler \rho / \lvert x \rvert)^{2\alpha - 2}$ on $B (\rho)$ to obtain
  \[
   \alpha^3 \left( \frac{\rho}{Q} \right)^{2\alpha-2} \int_{\Theta}   \psi^2
   \lesssim \left( \frac{4 \euler \rho}{\delta} \right)^{2\alpha-2} \int_{B (\delta)}  \psi^2
   + \left( \frac{\euler \rho}{2\euler Q} \right)^{2\alpha-2} \int_{\Lambda}  \psi^2  .
  \]
  If
  \[\alpha^3 2^{2 \alpha} \geq  2 \lVert \psi \rVert^2_{\cL^2 (\Lambda)} / \lVert \psi \rVert^2_{\cL^2 (\Theta)} ,
  \]
  we can subsume $\int_\Lambda \psi^2$ into the left hand side. The result follows by collecting all the constants.
\end{proof}
\begin{remark}
In Ineq.~\eqref{eq:3annuli} we estimate the values of the function $\psi$ on the middle annulus $A_2$
in terms of the values on the inner $A_1$ and outer $A_3$ annuli. Thus we have a similar geometric
situation as in Hadamard's three circle theorem \ref{t:hadamard3circle}.
\end{remark}
Quantitative unique continuation estimates as in Theorem~\ref{thm:BourgainK-13} are useful to obtain scale-free quantitative unique continuation estimates.
The following theorem was proven in \cite{Rojas-MolinaV-13} if $\Lambda = \Lambda_L$ and has been adapted to the case $\Lambda = \RR^d$ in \cite{TautenhahnV-15}.
It is a multidimensional analogue of Lemma~\ref{lemma:Gronwall} with an explicit dependence on $\delta$ and $\lVert V-E \rVert_\infty$.
\begin{theorem}
\label{t:scale_free-UCP}
Let $\Lambda \in \{\Lambda_L , \RR^d\}$. There exists a constant $K\in(0,\infty)$ depending merely on the dimension $d$,
such that for any $G >0$, $\delta \in (0,G/2]$, any $(G,\delta)$-equidistributed sequence $z_j$, $j \in (G \ZZ)^d$,
any measurable and bounded $V\colon {\RR^d}\to \RR$, any $L \in 2\NN - 1$ and any real-valued $\psi \in W^{2,2}(\Lambda)$ satisfying
$\lvert \Delta \psi \rvert \leq \lvert (V-E)\psi \rvert $ almost everywhere on $\Lambda$ we have
\begin{equation} \label{eq:Rojas-MolinaV-13}
\lVert \psi \rVert_{\cL^2(\Lambda_L)}
\geq
\lVert \psi \rVert_{\cL^2(W_\delta)}
\geq
\left( \frac{\delta}{G}\right)^{K(1+G^{4/3} \lVert V-E \rVert_\infty^{2/3})}
\lVert \psi \rVert_{\cL^2(\Lambda_L)} .
\end{equation}
\end{theorem}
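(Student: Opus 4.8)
\emph{Overview.} The left-hand inequality in \eqref{eq:Rojas-MolinaV-13} is trivial, so I concentrate on the lower bound, and proceed in three movements: reduce to unit scale and dispose of the boundary; prove a \emph{scale-uniform} three-region interpolation inequality on each pair of neighbouring grid cells by invoking Theorem~\ref{thm:BourgainK-13}; and finally sum these local inequalities and absorb a term into the left-hand side. The scaling $x\mapsto x/G$ turns the cells $\Lambda_G+j$ into unit cubes, turns $\delta$ into $\eta:=\delta/G\in(0,1/2]$, and multiplies the potential by $G^2$; since $G^{4/3}\lVert V-E\rVert_\infty^{2/3}=(G^2\lVert V-E\rVert_\infty)^{2/3}$, it suffices to treat $G=1$ with $M:=G^2\lVert V-E\rVert_\infty$, the exponent to be produced being $K(1+M^{2/3})$ and the base $\eta$. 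The inequality $\lvert\Delta\psi\rvert\le\lvert(V-E)\psi\rvert$ may be rewritten as $-\Delta\psi+\widetilde V\psi=0$ with $\lVert\widetilde V\rVert_\infty\le M$ (set $\widetilde V:=\Delta\psi/\psi$ where $\psi\neq0$ and $0$ elsewhere, which is legitimate since $\Delta\psi=0$ a.e.\ on $\{\psi=0\}$ for $\psi\in W^{2,2}$), so Theorem~\ref{thm:BourgainK-13} applies with $\zeta\equiv0$. For $\Lambda=\Lambda_L$ I would extend $\psi$ across the faces of $\Lambda_L$ by the reflections matching the boundary condition (odd/even/periodic, with $V$ reflected evenly); the extension still obeys $\lvert\Delta\psi\rvert\le\lvert(V-E)\psi\rvert$ with the same constant, and---this is where $L\in2\NN-1$ is used---the unit cells $\Lambda_1(j)$, $j\in\ZZ^d$, tile $\Lambda_L$ and are permuted by the reflections, so the $(1,\eta)$-equidistributed configuration extends equidistributed-ly. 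Hence $\psi$ may be treated as solving the equation on a bounded-width neighbourhood of $\Lambda_L$; the case $\Lambda=\RR^d$ is the adaptation carried out in \cite{TautenhahnV-15} and needs only minor changes.

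\emph{The local interpolation inequality.} Fix $j\in\ZZ^d$, let $j^{\pm}:=j\pm e_1$, and let $\widetilde\Lambda(j)$ be the cube of some dimensional side-length $c_d$ concentric with $\Lambda_1(j)$, chosen so large that the hypotheses below hold. I would apply Theorem~\ref{thm:BourgainK-13} on the ambient domain $\widetilde\Lambda(j)$ with $\Theta:=\Lambda_1(j^{+})$, $x_0:=z_j$ and observation ball $B(z_j,\eta)$. The hypotheses hold with room to spare: $\operatorname{dist}(z_j,\Lambda_1(j^{+}))\ge\eta$ and $Q:=Q(z_j,\Lambda_1(j^{+}))\in[1,c_d]$, the bound $Q\ge1$ being automatic because $\Lambda_1(j^{+})$ reaches a full unit beyond the common face while $B(z_j,\eta)\subset\Lambda_1(j)$ forces $z_j$ to the near side (for the excluded range $\eta\in(1/24,1/2]$ all constants are of dimensional size and a cruder direct estimate suffices). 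Writing $\kappa:=K(1+M^{2/3})$, inequality \eqref{eq:BourgainKlein} with $\zeta\equiv0$ becomes
\[
 \Bigl(\tfrac{\eta}{Q}\Bigr)^{\kappa\bigl(Q^{4/3}+\log(\lVert\psi\rVert_{\cL^2(\widetilde\Lambda(j))}/\lVert\psi\rVert_{\cL^2(\Lambda_1(j^{+}))})\bigr)}\lVert\psi\rVert_{\cL^2(\Lambda_1(j^{+}))}^2\le\lVert\psi\rVert_{\cL^2(B(z_j,\eta))}^2,
\]
and the only $\psi$-dependent quantity on the left is the logarithm. Solving for $\lVert\psi\rVert_{\cL^2(\Lambda_1(j^{+}))}$ is purely algebraic: with $\nu:=\kappa\log(Q/\eta)>0$ it reads $\lVert\psi\rVert_{\cL^2(\Lambda_1(j^{+}))}^{2+\nu}\le(Q/\eta)^{\kappa Q^{4/3}}\lVert\psi\rVert_{\cL^2(\widetilde\Lambda(j))}^{\nu}\lVert\psi\rVert_{\cL^2(B(z_j,\eta))}^{2}$, i.e.
\[
 \lVert\psi\rVert_{\cL^2(\Lambda_1(j^{+}))}\le C_1\,\lVert\psi\rVert_{\cL^2(\widetilde\Lambda(j))}^{1-\gamma}\,\lVert\psi\rVert_{\cL^2(B(z_j,\eta))}^{\gamma},\qquad\gamma=\tfrac{2}{2+\nu},\quad C_1=(Q/\eta)^{\kappa Q^{4/3}\gamma/2},
\]
with $\gamma\in(0,1)$ and $C_1$ depending only on $d$, $M$ and $\eta$---crucially, not on $\psi$ and not on $L$. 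The same holds with $j^{+}$ replaced by $j^{-}$.

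\emph{Summation and absorption.} Squaring the interpolation inequality and applying the weighted inequality $C_1^2\,a^{\gamma}b^{1-\gamma}\le\epsilon\,b+C(\epsilon)\,a$ with $C(\epsilon)\sim C_1^{2/\gamma}\epsilon^{-(1-\gamma)/\gamma}$, where $a=\lVert\psi\rVert_{\cL^2(B(z_j,\eta))}^2$ and $b=\lVert\psi\rVert_{\cL^2(\widetilde\Lambda(j))}^2$, yields for each $j$ and each sign
\[
 \lVert\psi\rVert_{\cL^2(\Lambda_1(j^{\pm}))}^2\le\epsilon\,\lVert\psi\rVert_{\cL^2(\widetilde\Lambda(j))}^2+C(\epsilon)\,\lVert\psi\rVert_{\cL^2(B(z_j,\eta))}^2.
\]
Summing over all $j$ with $\Lambda_1(j)\subset\Lambda_L$ and over both signs, the left side is $2\lVert\psi\rVert_{\cL^2(\Lambda_L)}^2$ (the reindexings $j\mapsto j^{\pm}$ are near-bijections of the cells tiling $\Lambda_L$, and the one boundary slice lost in each direction is matched, by reflection symmetry of the extended $\psi$, by the one gained in the other); the cubes $\widetilde\Lambda(j)$ have bounded overlap and lie in a fixed inflation of $\Lambda_L$, which under the reflection consists of a bounded number of copies of $\Lambda_L$, so $\sum_j\lVert\psi\rVert_{\cL^2(\widetilde\Lambda(j))}^2\le c_d'\lVert\psi\rVert_{\cL^2(\Lambda_L)}^2$; and the balls $B(z_j,\eta)$ are pairwise disjoint with union $W_\eta$, so $\sum_j\lVert\psi\rVert_{\cL^2(B(z_j,\eta))}^2=\lVert\psi\rVert_{\cL^2(W_\eta)}^2$. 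Choosing $\epsilon=1/(2c_d')$ and rearranging gives $\lVert\psi\rVert_{\cL^2(W_\eta)}^2\ge(2C(\epsilon))^{-1}\lVert\psi\rVert_{\cL^2(\Lambda_L)}^2$. Substituting $\gamma=2/(2+\nu)$, $\nu\asymp\kappa\log(1/\eta)$, $C_1\asymp(Q/\eta)^{\kappa Q^{4/3}\gamma/2}$, collecting all dimensional constants and undoing the scaling ($\eta=\delta/G$, $\kappa=K(1+G^{4/3}\lVert V-E\rVert_\infty^{2/3})$) one reads off $(2C(\epsilon))^{-1}=(\delta/G)^{2K'(1+G^{4/3}\lVert V-E\rVert_\infty^{2/3})}$ for a new dimensional constant $K'$, whose square root is exactly the bound \eqref{eq:Rojas-MolinaV-13}.

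\emph{The main obstacle.} The one genuinely delicate step is the second movement: converting the Bourgain--Klein estimate \eqref{eq:BourgainKlein}, whose exponent carries the $\psi$-dependent term $\log(\lVert\psi\rVert_{\cL^2(\Lambda)}/\lVert\psi\rVert_{\cL^2(\Theta)})$, into a clean interpolation inequality whose constants depend only on $d$, $M$ and $\eta$. This is precisely what makes the final constant \emph{scale-free}: once the local inequality carries no dependence on $\psi$ or $L$, the summation-and-absorption step is routine and automatically produces an $L$-independent bound. The supporting points---that $Q$ and all radii stay bounded by dimensional constants because one only compares sets within a fixed cluster of neighbouring unit cells, the reflection bookkeeping near $\partial\Lambda_L$, and the arithmetic recovering the exponent $K(1+G^{4/3}\lVert V-E\rVert_\infty^{2/3})$---are straightforward but tedious and are carried out in full in \cite{Rojas-MolinaV-13}.
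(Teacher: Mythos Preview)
Your argument is correct and arrives at the right constant, but the route you take to neutralise the $\psi$--dependent logarithm in \eqref{eq:BourgainKlein} is genuinely different from the one the paper (following \cite{Rojas-MolinaV-13}) sketches. The paper's strategy is combinatorial: one introduces the probability measure $\mu(k)=\lVert\psi\rVert_{\cL^2(\Lambda_1(k))}^2/\lVert\psi\rVert_{\cL^2(\Lambda_L)}^2$ on the lattice cells, invokes the reverse Markov lemma to single out a set of \emph{dominating sites} $k$ for which $\mu(k)\ge (TN)^{-1}$, and observes that on those sites the offending ratio $\lVert\psi\rVert_{\cL^2(\tilde\Lambda)}/\lVert\psi\rVert_{\cL^2(\Theta)}$ is a priori bounded, so the Bourgain--Klein exponent becomes $\psi$--independent there; the non-dominating sites carry at most a $1/T$ fraction of the mass and are discarded. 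Your strategy is analytic: you recognise that the logarithm in the exponent of \eqref{eq:BourgainKlein} is exactly what turns the inequality into a three-term multiplicative interpolation $\lVert\psi\rVert_{\Theta}\le C_1\lVert\psi\rVert_{\tilde\Lambda}^{1-\gamma}\lVert\psi\rVert_{B}^{\gamma}$ with $\gamma$, $C_1$ independent of $\psi$, and then a Young-plus-absorption argument replaces the pigeonholing. Your approach is arguably more streamlined and avoids any case distinction between ``good'' and ``bad'' cells; the dominating-sites method, on the other hand, makes transparent \emph{why} scale-freeness holds (most of the mass sits where the local ratio is controlled) and generalises more readily to settings where an exact interpolation form is not available.

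Two small points of precision: first, $Q=Q(z_j,\Lambda_1(j^{+}))$ depends on $j$ through the position of $z_j$, so $\gamma$ and $C_1$ are not literally constants but $j$-dependent quantities uniformly bounded in terms of $d$, $M$, $\eta$; it suffices to take the worst case before summing. Second, absorbing the dimensional factor $c_d^{K(1+M^{2/3})C''}$ into the base $(\delta/G)$ uses $\delta/G\le 1/2$; this is harmless but worth stating.
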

Recall that $W_\delta$ denotes the union of $\delta$-balls around an equidistributed sequence.
In comparison to Theorem \ref{thm:DonnellyF1}
we have here no dependence on the diameter of the set $\Lambda_L$,
because we have not just one base point $x_0$, but an equidistributed sequence $z_j$, $j \in (G \ZZ)^d$.
\begin{remark}
Such estimates are called quantitative unique continuation estimates, or uncertainty principles, or observability estimates.
Since there is no dependence on $L\in  2\NN - 1$ the estimate is called scale-free
and the constant $C_{\sfuc} = ( \delta / G )^{K_0(1+G^{4/3} \lVert V-E \rVert_\infty^{2/3})}$
is called scale-free unique continuation constant.
\par
The dependence on the other paramters is also of interest.
Only the sup-norm $\lVert V \rVert_\infty$ of the potential enters, no knowledge of $V$ beyond this is used, in particular no regularity properties.
The constant $C_{\sfuc}$ is polynomial in $\delta$ and (almost) exponential in $\lVert V \rVert_\infty$.
\end{remark}
\begin{remark}
In order to prove Theorem \ref{t:scale_free-UCP} one uses Theorem 3.1 in \cite{Rojas-MolinaV-13}, which is very similar to Theorem
 \ref{thm:BourgainK-13} above. The roles played by the different sets are as follows: $\Lambda$ is the original finite or infinite
 cube on which the function $\psi$ is considered. $\Theta$ is a cube of side $62 \lceil \sqrt d\rceil$ centered at a lattice point
 $ k \in \Lambda \cap \ZZ^d$ inside the cube $\Lambda$. One should think of  $\Theta$ as a neighbourhood of a unit cube $\Lambda_1(k)$
 centered at the same $k$.
 The ball $B(x_0, \delta)$ is placed in (say the right) next-neighbour unit cube adjacent to $\Lambda_1(k)$.
 There is an issue with lattice sites $k$ near the boundary of $\Lambda$, but for the moment let uns consider the cae of periodic
 boundary conditions on the faces $\Lambda$. Then we can consider equivalently a partial differential equation on a torus (without boundary).
 Unfortunarely one does not have a priori information about the quotient $\lVert \psi \rVert_{\cL^2(\Lambda)} / \lVert \psi \rVert_{\cL^2(\Theta)}$.
 As discussed before, without this information the bound \eqref{eq:BourgainKlein} cannot be applied directly.
 \par
 It turns out that it is sufficient that the a priori bound holds in a certain averaged sense: not for all lattice points
 $ k \in \Lambda \cap \ZZ^d$ but just for those which 'carry most weight'. To make this precise the notion of
 \emph{dominating sites} is introduced in \cite{Rojas-MolinaV-13}. One uses the following obvious but useful observation:
\end{remark}
 \begin{lemma}[A reverse Markov inequality]
 Let $N,T\in \NN$ and $\mu$ be a probability measure on $\overline{N} := \{ 1, ..., N\}$.
 Set $\mathcal{A} := \{ n \in \overline{N} \mid \mu(n) \leq \frac{1}{T} \frac{1}{N} \}$. Then $\mu(A) \leq 1/T$.
 \end{lemma}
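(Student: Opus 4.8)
The plan is to estimate $\mu(\mathcal{A})$ directly by summing the point masses and using the defining property of $\mathcal{A}$ together with the trivial bound $\lvert \mathcal{A}\rvert \le N$. First I would write, since $\mu$ is a probability measure on the finite set $\overline{N}$,
\[
\mu(\mathcal{A}) = \sum_{n \in \mathcal{A}} \mu(n).
\]
By definition of $\mathcal{A}$, every summand satisfies $\mu(n) \le \frac{1}{T}\frac{1}{N}$, so
\[
\mu(\mathcal{A}) \le \sum_{n \in \mathcal{A}} \frac{1}{TN} = \frac{\lvert \mathcal{A}\rvert}{TN}.
\]
Finally, using $\mathcal{A} \subset \overline{N}$ and hence $\lvert \mathcal{A}\rvert \le N$, I would conclude $\mu(\mathcal{A}) \le \frac{N}{TN} = \frac{1}{T}$, which is the claim.

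There is essentially no obstacle here: the statement is a one-line counting argument, the ``reverse'' of the usual Markov/Chebyshev bound in the sense that it controls the measure of the set of \emph{atypically small} atoms rather than atypically large ones. The only point worth noting is that finiteness of $\overline{N}$ (equivalently, that $\mu$ is purely atomic with at most $N$ atoms) is what makes $\lvert \mathcal{A}\rvert \le N$ available; no continuity or regularity of $\mu$ is needed. If one wanted a version for a general probability space one would replace counting measure by a reference measure and obtain the analogous bound, but for the application to dominating sites the finite combinatorial form above is exactly what is used.
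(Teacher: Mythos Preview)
Your proof is correct and is exactly the intended argument; the paper in fact states the lemma as an ``obvious but useful observation'' without giving a proof, and the direct counting estimate $\mu(\mathcal{A})=\sum_{n\in\mathcal{A}}\mu(n)\le \lvert\mathcal{A}\rvert/(TN)\le 1/T$ is the natural (and essentially only) way to see it.
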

For details of the proof of theorem \ref{t:scale_free-UCP}
see  \cite{Rojas-MolinaV-13}.
\begin{remark}
If we are dealing with neither an eigenfunction $\psi$, nor a function which satisfies the inequality
$\lvert \Delta \psi \rvert \leq \lvert (V-E)\psi \rvert $, but with a linear combinations of eigenfunctions
there is no easy way to apply Theorem \ref{t:scale_free-UCP}.
As we will see there are (at least) two approaches how to deal with the problem:
 \begin{itemize}
 \item If the energy interval, which contains the relevant eigenvalues is small enough
 one can control the norm of $\zeta$ sufficiently well. The drawback is that only small energy intervals are allowed.
 \item Or one uses a more sophisticated argument to exploit the full power of Carleman estimates.
 This includes introducing an additional ghost dimension and using two different interpolation estimates based on Carleman estimates.
 \end{itemize}
All this will be discussed in the next section.
\end{remark}
\subsection{Spectral subspaces of Schr\"odinger operators}
\label{ss:linear-combinations}
In \cite{Rojas-MolinaV-13} the authors posed the open question whether Ineq.~\eqref{eq:Rojas-MolinaV-13} holds also for linear combinations of eigenfunctions, i.e.\ for $\phi \in \ran \chi_{(-\infty , E]} (H_\Lambda)$. This is equivalent to
\begin{equation*} \label{eq:uncertainty}
  \chi_{(-\infty,E]} (H_\Lambda) \, \chi_{W_\delta} \, \chi_{(-\infty,E]} (H_\Lambda)
  \geq
  C \chi_{(-\infty,E]} (H_L) ,
\end{equation*}
 with an explicit dependence of $C$ on the parameters $\delta$, $E$ and $\lVert V \rVert_\infty$.
 Here $\chi_{I} (H_\Lambda)$ denotes the spectral projector of $H_\Lambda$ onto the interval $I$.
 A partial answer, for short energy intervals, was given in \cite{Klein-13} in the finite volume case $\Lambda = \{\Lambda_L\}$
 and adapted to the case $\Lambda = \RR^d$ in \cite{TautenhahnV-15}.
\begin{theorem} \label{thm:Klein-13}
Let $\Lambda \in \{\RR^d , \Lambda_L\}$. There is $K = K (d)$ such that for all $E,G > 0$, $\delta \in (0,G/2)$, all $(G,\delta)$-equidistributed sequences $z_j$,
any measurable and bounded $V\colon {\RR^d}\to \RR$, any $L \in 2\NN - 1$ and all intervals $I \subset (-\infty , E]$ with
\[
 \vert I \rvert \leq 2 \gamma \quad \text{where} \quad
 \gamma^2 = \frac{1}{2G^4} \left(\frac{\delta}{G}\right)^{K \bigl(1+ G^{4/3}(2\lVert V\rVert_\infty + E)^{2/3} \bigr)} ,
\]
and all $\phi \in \ran \chi_{I} (H_\Lambda)$ we have
 \[
  \lVert \phi \rVert_{\cL^2 (W_\delta)}
  \geq G^4 \gamma^2 \lVert \phi \rVert_{\cL^2 (\Lambda)} .
 \]
\end{theorem}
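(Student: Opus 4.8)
The plan is to reduce the assertion to the homogeneous scale-free unique continuation estimate of Theorem~\ref{t:scale_free-UCP} for a shifted potential, treating the defect produced by the spectral projection as a small perturbation. Fix an interval $I \subset (-\infty, E]$ with $\lvert I\rvert \le 2\gamma$, let $E_0 \in I$ be its midpoint, and for $\phi \in \ran\chi_I(H_\Lambda)$ set
\[
 \zeta := (H_\Lambda - E_0)\phi = -\Delta\phi + (V - E_0)\phi .
\]
Applying the spectral theorem to the bounded function $\lambda\mapsto(\lambda - E_0)\chi_I(\lambda)$ gives $\lVert\zeta\rVert_{\cL^2(\Lambda)} \le \tfrac12\lvert I\rvert\,\lVert\phi\rVert_{\cL^2(\Lambda)} \le \gamma\,\lVert\phi\rVert_{\cL^2(\Lambda)}$. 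Since $-\Delta\ge 0$ for each of the admissible boundary conditions, one has $\inf\sigma(H_\Lambda)\ge -\lVert V\rVert_\infty$; together with $I\subset(-\infty,E]$ this yields $\lvert E_0\rvert\le\lVert V\rVert_\infty + E$, so the effective potential $\tilde V := V - E_0$ obeys $\lVert\tilde V\rVert_\infty\le 2\lVert V\rVert_\infty + E$. Thus $\phi$ solves $-\Delta\phi + \tilde V\phi = \zeta$ on $\Lambda$ with a potential of controlled size and a defect of relative $\cL^2$-norm at most $\gamma$.

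Next I would dilate by $x\mapsto x/G$, turning the $(G,\delta)$-equidistributed sequence into a $(1,\delta/G)$-equidistributed one and the equation into $-\Delta\widehat\phi = G^2\tilde V\,\widehat\phi + G^2\widehat\zeta$ on the rescaled domain. The rescaled potential has sup-norm at most $G^2(2\lVert V\rVert_\infty + E)$, which is precisely what produces the exponent $G^{4/3}(2\lVert V\rVert_\infty + E)^{2/3}$ in the statement, while the relative $\cL^2$-norm of the defect is inflated to $G^2\gamma$ (all $\cL^2$-norms acquire the same factor $G^{-d/2}$, which cancels in ratios). On this rescaled problem I would follow the proof of Theorem~\ref{t:scale_free-UCP}, but using the inhomogeneous local estimate of Theorem~\ref{thm:BourgainK-13} — equivalently, Theorem~3.1 of~\cite{Rojas-MolinaV-13}, tailored to the equidistributed geometry — in place of its homogeneous version: for each relevant lattice site one places the $(\delta/G)$-ball in a neighbouring unit cube and a fixed-size cube $\Theta_k$ around the site, applies the local estimate, controls the a priori quotient $\lVert\widehat\phi\rVert_{\cL^2}/\lVert\widehat\phi\rVert_{\cL^2(\Theta_k)}$ at the dominating sites through the reverse Markov inequality, and sums over the sites with bounded overlap. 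The only new feature is that each local estimate now carries an additive error of order $(\delta/G)^2\lVert G^2\widehat\zeta\rVert_{\cL^2}^2$; tracking it through the summation and undoing the dilation yields, for a suitable dimensional constant absorbed into $K = K(d)$,
\[
 C_{\sfuc}\,\lVert\phi\rVert_{\cL^2(\Lambda)}^2
 \le
 \lVert\phi\rVert_{\cL^2(W_\delta)}^2 + \delta^2 G^2\,\lVert\zeta\rVert_{\cL^2(\Lambda)}^2,
 \qquad
 C_{\sfuc} := \Bigl(\tfrac{\delta}{G}\Bigr)^{K\bigl(1 + G^{4/3}(2\lVert V\rVert_\infty + E)^{2/3}\bigr)} .
\]

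It remains to absorb the perturbation. From the first step $\lVert\zeta\rVert_{\cL^2(\Lambda)}^2\le\gamma^2\lVert\phi\rVert_{\cL^2(\Lambda)}^2$, and $\gamma$ was chosen exactly so that $G^4\gamma^2 = \tfrac12 C_{\sfuc}$; hence $\delta^2 G^2\lVert\zeta\rVert_{\cL^2(\Lambda)}^2 \le \tfrac{\delta^2}{2G^2}C_{\sfuc}\lVert\phi\rVert_{\cL^2(\Lambda)}^2 \le \tfrac12 C_{\sfuc}\lVert\phi\rVert_{\cL^2(\Lambda)}^2$ because $\delta\le G/2$. Moving this term to the left gives $\lVert\phi\rVert_{\cL^2(W_\delta)}^2\ge\tfrac12 C_{\sfuc}\lVert\phi\rVert_{\cL^2(\Lambda)}^2 = G^4\gamma^2\lVert\phi\rVert_{\cL^2(\Lambda)}^2$, and since $G^2\gamma\le 1$ this in particular implies the (weaker, but cleanly stated) bound $\lVert\phi\rVert_{\cL^2(W_\delta)}\ge G^4\gamma^2\,\lVert\phi\rVert_{\cL^2(\Lambda)}$ of the theorem; the inequality $\lVert\phi\rVert_{\cL^2(W_\delta)}\le\lVert\phi\rVert_{\cL^2(\Lambda)}$ is trivial.

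The main obstacle I expect is the bookkeeping in the middle step: one must sum the order $(L/G)^d$ many local estimates so that both the $L$-independence of $C_{\sfuc}$ survives and the accumulated additive $\zeta$-errors stay under control, while keeping the dependence on $G$, $\delta$, $\lVert V\rVert_\infty$ and $E$ explicit enough for the final choice of $\gamma$ to close the argument. What makes this possible is that Theorem~\ref{thm:BourgainK-13} needs no a priori lower bound — the quotient $\lVert\widehat\phi\rVert_{\cL^2}/\lVert\widehat\phi\rVert_{\cL^2(\Theta_k)}$ is supplied only at dominating sites via the reverse Markov lemma — together with the fact that, by the short-interval hypothesis, $\gamma$ is chosen small enough to beat every fixed power of $G/\delta$ appearing among the constants.
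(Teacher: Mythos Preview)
The paper does not supply its own proof of this theorem; it merely records the result and attributes it to \cite{Klein-13} (finite volume) and \cite{TautenhahnV-15} ($\Lambda=\RR^d$). Your argument --- writing $\phi\in\ran\chi_I(H_\Lambda)$ as an approximate eigenfunction $(-\Delta+\tilde V)\phi=\zeta$ with $\lVert\zeta\rVert\le\gamma\lVert\phi\rVert$, feeding this into the inhomogeneous scale-free unique continuation estimate obtained by running the proof of Theorem~\ref{t:scale_free-UCP} with Theorem~\ref{thm:BourgainK-13} in place of its homogeneous counterpart, and then absorbing the resulting $\delta^2 G^2\lVert\zeta\rVert^2$ term via the choice of $\gamma$ --- is exactly the strategy of those references, and your bookkeeping of the constants and the scaling in $G$ is correct. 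One cosmetic point: your bound $E_0\ge-\lVert V\rVert_\infty$ tacitly assumes $E_0$ lies at or above the bottom of the spectrum, whereas the midpoint of $I$ could sit up to $\gamma$ below it; since $\gamma\le 1$ this is harmless after enlarging the dimensional constant $K$, but it is worth stating explicitly.
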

A full answer to the above question, i.e.\ Theorem~\ref{thm:Klein-13} for arbitrary compact energy intervals $I\subset \RR$
has been given in \cite{NakicTTV-14},
while full proofs will be provided in \cite{NakicTTV-prep}.
\begin{theorem} \label{thm:NakicTTV}
Let $\Lambda = \Lambda_L$. There is $K = K(d)$ such that for all $G > 0$, all $\delta \in (0,G/2)$, all $(G,\delta)$-equidistributed sequences $z_j$, all measurable and bounded $V: \RR^d \to \RR$, all $L \in G\NN$, all $E \geq 0$ and all $\phi \in \mathrm{Ran} (\chi_{(-\infty,b]}(H_{\Lambda_L}))$ we have
 \begin{equation*}
\lVert \phi \rVert_{\cL^2 (W_\delta)}^2
\geq C_{\sfuc} \lVert \phi \rVert_{\cL^2 (\Lambda_L)}^2
\end{equation*}
where
\begin{equation*}
C_{\sfuc} = C_{\sfuc} (d, G, \delta ,  E  , \lVert V \rVert_\infty )
:=  \left(\frac{\delta}{G} \right)^{K \bigl(1 + G^{4/3} \lVert V \rVert_\infty^{2/3} + G \sqrt{E} \bigr)} .
\end{equation*}
\end{theorem}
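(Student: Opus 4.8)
The plan is to reduce the statement for spectral subspaces to the single-solution estimate of Theorem~\ref{t:scale_free-UCP} by a \emph{ghost dimension} lift in which the energy $E$ is absorbed into the coefficients. Write $H_{\Lambda_L}\psi_j=E_j\psi_j$ with $(\psi_j)$ an orthonormal basis of $\cL^2(\Lambda_L)$ and $\phi=\sum_{E_j\le E}c_j\psi_j$ (a finite sum, since $H_{\Lambda_L}$ has compact resolvent, so $\phi\in W^{2,2}(\Lambda_L)$). Since $H_{\Lambda_L}\ge-\lVert V\rVert_\infty$, the numbers $\omega_j:=\sqrt{E_j+\lVert V\rVert_\infty}$ are real, and I set, on $\Lambda_L\times\RR$,
\[
 \Phi(x,t):=\sum_{E_j\le E}c_j\,\psi_j(x)\,\cosh(\omega_j t).
\]
Then $\Phi(\cdot,0)=\phi$, and since $\Delta_x\psi_j=(V-E_j)\psi_j$ and $\partial_t^2\cosh(\omega_j t)=(E_j+\lVert V\rVert_\infty)\cosh(\omega_j t)$, a term-by-term computation gives
\[
 \Delta_{x,t}\Phi=(V+\lVert V\rVert_\infty)\,\Phi,\qquad\text{whence}\qquad \lvert\Delta_{x,t}\Phi\rvert\le 2\lVert V\rVert_\infty\,\lvert\Phi\rvert .
\]
The decisive point is that the effective potential $V+\lVert V\rVert_\infty$ has sup-norm at most $2\lVert V\rVert_\infty$ and carries no trace of $E$: the energy has migrated into the hyperbolic weights $\cosh(\omega_j t)$, which grow at most like $\euler^{\lvert t\rvert\sqrt{E+\lVert V\rVert_\infty}}$.

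Next I would cut off the ghost direction at a scale proportional to the equidistribution period $G$: with $\chi\in C_0^\infty(\RR;[0,1])$, $\chi\equiv1$ on $[-G,G]$ and $\supp\chi\subset(-2G,2G)$, put $\Psi:=\chi\Phi$, so that $\Psi(\cdot,0)=\phi$, $\Psi\in W^{2,2}$ with support bounded in $t$, and $-\Delta_{x,t}\Psi+(V+\lVert V\rVert_\infty)\Psi=\zeta$ with $\zeta=-2\chi'\partial_t\Phi-\chi''\Phi$ supported in $\{G\le\lvert t\rvert\le2G\}$. Using orthonormality of the $\psi_j$ in $\cL^2(\Lambda_L)$, i.e.\ $\lVert\Phi(\cdot,t)\rVert_{\cL^2(\Lambda_L)}^2=\sum\lvert c_j\rvert^2\cosh^2(\omega_j t)$ and $\lVert\partial_t\Phi(\cdot,t)\rVert_{\cL^2(\Lambda_L)}^2=\sum\lvert c_j\rvert^2\omega_j^2\sinh^2(\omega_j t)$, one controls $\lVert\zeta\rVert_{\cL^2}$ and $\lVert\Psi\rVert_{\cL^2(\Lambda_L\times(-2G,2G))}$ by $\euler^{O(G\sqrt{E+\lVert V\rVert_\infty})}\lVert\phi\rVert_{\cL^2(\Lambda_L)}$. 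On a $\delta$-ball around $(z_k,0)$ one has $\lvert\cosh(\omega_j t)-1\rvert\le\cosh(\delta\sqrt{E+\lVert V\rVert_\infty})-1$ for $\lvert t\rvert\le\delta$, which lets me compare $\lVert\Psi\rVert_{\cL^2(B((z_k,0),\delta))}$ with $\lVert\phi\rVert_{\cL^2(B(z_k,\delta))}$, and likewise to compare $\lVert\Psi\rVert_{\cL^2}$ on a thin ghost slab with $\lVert\phi\rVert_{\cL^2}$ on a box. Here one must recall that the $\psi_j$ are \emph{not} orthogonal on $W_\delta$, so only the triangle inequality, not Parseval, is available for these comparisons.

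With this in place I would run the proof of Theorem~\ref{t:scale_free-UCP} in dimension $d+1$: apply the quantitative interpolation inequality of \cite{Rojas-MolinaV-13} (the analogue of Theorem~\ref{thm:BourgainK-13}, with remainder term $\delta^2\lVert\zeta\rVert^2$) to $\Psi$, with ``big set'' a neighbourhood of a unit cube around a lattice point $k$ and observation ball $B((z_k,0),\delta)$ in an adjacent cell, the a priori quotient entering the exponent being made $L$-independent by the reverse Markov inequality / dominating-sites device of \cite{Rojas-MolinaV-13}. Summing the resulting local estimates over lattice cells and collecting constants yields $\lVert\phi\rVert_{\cL^2(W_\delta)}^2\ge C_{\sfuc}\lVert\phi\rVert_{\cL^2(\Lambda_L)}^2$. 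The $E$-free Carleman cost of propagating from the $\delta$-ball (with $Q\sim G$, not $\sim L$, by scale-freeness, and effective potential of sup-norm $\le 2\lVert V\rVert_\infty$) contributes the summand $G^{4/3}\lVert V\rVert_\infty^{2/3}$ to the exponent, while the hyperbolic growth $\euler^{O(G\sqrt{E+\lVert V\rVert_\infty})}$ of $\Psi$ in the ghost direction, which enters through the a priori quotient, contributes the summand $G\sqrt E$; arranging the argument so that these two costs \emph{add} rather than \emph{multiply} one obtains $C_{\sfuc}=(\delta/G)^{K(1+G^{4/3}\lVert V\rVert_\infty^{2/3}+G\sqrt E)}$, the case $\Lambda_L=\RR^d$ being analogous, cf.\ \cite{NakicTTV-14,NakicTTV-prep}.

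\textbf{Main obstacle.} The lift and the $\cL^2$-bookkeeping are essentially routine; the crux is keeping every $E$-dependent cost at the level $\euler^{O(G\sqrt E)}$, i.e.\ \emph{linear} in $G\sqrt E$ at the level of exponents, rather than the $G^{4/3}E^{2/3}$ one would get by applying a Carleman estimate to an equation in which $E$ still sits in the potential. This hinges on choosing the ghost-direction scale proportional to $G$ (too small and $\lVert\zeta\rVert$ together with the norm quotients blow up faster than the Carleman estimate can absorb; comparable to $G$ and one obtains exactly the $G\sqrt E$ summand), and, more delicately, on carrying the $\cosh$-weights consistently through the dominating-sites argument so that the a priori quotient stays simultaneously $L$-independent and of size $\euler^{O(G\sqrt E)}$ and does not get multiplied against the $\lVert V\rVert_\infty^{2/3}$-factor. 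This is precisely the point that forces the use of \emph{two} interpolation inequalities in $d+1$ dimensions — one propagating the information from the $\delta$-ball to the adjacent unit box, a second controlling the spread of $\Psi$ in the ghost direction so as to furnish the a priori quotient — instead of the single interpolation inequality that sufficed for eigenfunctions (Theorem~\ref{t:scale_free-UCP}) or short energy intervals (Theorem~\ref{thm:Klein-13}). The remainder is the bookkeeping of verifying that all contributions collapse into the stated exponent $K\bigl(1+G^{4/3}\lVert V\rVert_\infty^{2/3}+G\sqrt E\bigr)$ with $K$ depending only on $d$.
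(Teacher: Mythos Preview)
Your overall architecture --- lift to $d+1$ dimensions so that the spectral parameter is absorbed into hyperbolic growth, then run two Carleman-based interpolation inequalities --- is the same as the paper's. But there is a genuine gap in the step where you pass back from the $(d+1)$-dimensional object to $\lVert\phi\rVert_{\cL^2(W_\delta)}$, and it is tied to a consequential difference between your lift and the paper's.

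You take $\Phi(x,t)=\sum c_j\psi_j(x)\cosh(\omega_j t)$, so that $\Phi(\cdot,0)=\phi$, and then propose to compare $\lVert\Psi\rVert_{\cL^2(B((z_k,0),\delta))}$ with $\lVert\phi\rVert_{\cL^2(B(z_k,\delta))}$ by the triangle inequality. But the only control you have on $\Psi(\cdot,t)-\phi$ is the \emph{global} one,
\[
\lVert\Psi(\cdot,t)-\phi\rVert_{\cL^2(B(z_k,\delta))}\le\lVert\Psi(\cdot,t)-\phi\rVert_{\cL^2(\Lambda_L)}\le\bigl(\cosh(\delta\sqrt{E+\lVert V\rVert_\infty})-1\bigr)\,\lVert\phi\rVert_{\cL^2(\Lambda_L)},
\]
precisely because (as you note) the $\psi_j$ are not orthogonal on the small ball. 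Summing over $k$, the error is still of order $\lVert\phi\rVert_{\cL^2(\Lambda_L)}$. To absorb it you would need the Carleman output on the $(d+1)$-balls to exceed this error, i.e.\ you would need $\lVert\phi\rVert_{\cL^2(W_\delta)}\gtrsim C_{\sfuc}^{1/2}\lVert\phi\rVert_{\cL^2(\Lambda_L)}$ --- which is the very inequality you are trying to prove. The argument is circular, and no choice of ghost-direction cutoff scale repairs it.

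The paper avoids this by a different lift: it uses $s_k(t)\in\{\sinh(\lambda_k t)/\lambda_k,\ t,\ \sin(\lambda_k t)/\lambda_k\}$, so that $F(\cdot,0)=0$ and $\partial_{d+1}F(\cdot,0)=\phi$. The point of making $F$ \emph{vanish} on the hyperplane $\{x_{d+1}=0\}$ is that it puts one in the geometric setting of the boundary Carleman estimate of Lebeau--Robbiano \cite{LebeauR-95}, which carries a Neumann trace term on that hyperplane. This is exactly Proposition~\ref{prop:interpolation1}: it bounds $\lVert F\rVert_{H^1(U_1)}$ directly by $\lVert(\partial_{d+1}F)(\cdot,0)\rVert_{\cL^2(W_\delta)}^{1/2}\lVert F\rVert_{H^1(U_3)}^{1/2}$, so the $d$-dimensional quantity $\lVert\phi\rVert_{\cL^2(W_\delta)}$ enters the $(d+1)$-dimensional chain \emph{as a boundary trace}, without any triangle-inequality comparison. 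Your description of the second interpolation as ``controlling the spread of $\Psi$ in the ghost direction so as to furnish the a priori quotient'' is therefore off: the boundary estimate is what recovers $\lVert\phi\rVert_{\cL^2(W_\delta)}$, while the interior three-annuli estimate (Proposition~\ref{prop:interpolation2}, based on \eqref{eq:CarlemanBK}) propagates $H^1$-mass between nested $(d+1)$-dimensional sets. The $L$-independence then comes from the sandwich $D_1\lVert\phi\rVert\le\lVert F\rVert_{H^1(X_3)}\le D_5\lVert F\rVert_{H^1(X_1)}$, which replaces the dominating-sites device here; and the paper keeps the potential $V$ unshifted, so $\Delta F=VF$ rather than $(V+\lVert V\rVert_\infty)F$.
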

Let us shortly discuss the ideas for the proof of Theorem~\ref{thm:NakicTTV}. By scaling it suffices to consider $G=1$ only.
Given $V:\RR^d \to \RR$ and $L \in \NN$ we denote by $\psi_k$, $k \in \NN$, the eigenfunctions of $H_{\Lambda_L}$ with corresponding eigenvalues $E_k$. Then given $E \geq 0$ each $\phi \in \mathrm{Ran} (\chi_{(-\infty,E]}(H_{\Lambda_L}))$
can be represented as
\begin{equation} \label{eq:phi}
 \phi = \sum_{\genfrac{}{}{0pt}{2}{k \in \NN}{E_k \leq E}} \alpha_k \psi_k  \quad \text{with} \quad
 \alpha_k = \langle \psi_k , \phi \rangle .
\end{equation}
Let $R = \lceil 18\euler\sqrt{d} \rceil$. Using reflections and translations, we extend the eigenfunctions and the potential $V_L = V|_{\Lambda_L}$
in such a way to $\Lambda_{RL}$ that the extensions still solve the eigenvalue equation. We use the same symbols $V_L$ and $\psi_k$ for the extended versions.
This is possible for periodic, Dirichlet, and Neumann boundary conditions.
Let further $F : X = \Lambda_{RL} \times \RR \to \CC$ be defined by
\begin{equation} \label{eq:F}
 F (x , x_{d+1}) = \sum_{\genfrac{}{}{0pt}{2}{k \in \NN}{E_k \leq b}} \alpha_k \phi_k (x) \funs_k( x_{d+1}) ,
\end{equation}
where $s_k : \RR \to \RR$ is given by
\[
\funs_k(t)=\begin{cases}
	\sinh(\lambda_k t)/\lambda_k, & E_k>0,\\
	t, & E_k=0,\\
	\sin(\lambda_k t)/\lambda_k, & E_k<0,
\end{cases}
\]
with $\lambda_k = \sqrt{\lvert E_k \rvert}$. The function $F$ fulfills
\begin{equation*} \label{eq:DeltaF}
\Delta F = \sum_{i=1}^{d+1} \partial^2_{i} F  =  V_L F \quad \text{on} \quad  \Lambda_{R L} \times \RR
\end{equation*}
and
\begin{equation*} \label{eq:F-phi}
\partial_{d+1} F (\cdot , 0) = \sum_{\genfrac{}{}{0pt}{2}{k \in \NN}{E_k \leq b}} \alpha_k \psi_k (\cdot) \quad \text{on} \quad \Lambda_{R L}  .
\end{equation*}
In particular, for all $x \in \Lambda_L$ we have $\partial_{d+1} F (\cdot , x) = \phi(x)$.
This way we recover the original function we are interested in. Let $X_1 = \Lambda_L \times [-1,1]$ and $X_3 = \Lambda_{L + 18\euler\sqrt{d}} \times [- 9\euler\sqrt{d} , 9\euler\sqrt{d}]$.
The goal is to obtain lower and upper bounds on the $H^1$-norm of $F$, more precisely
\begin{equation} \label{eq:sandwich}
 D_1 \lVert \phi \rVert_{\cL^2 (\Lambda_L)} \leq \lVert F \rVert_{H^1 (X_3)}
 \leq
 D_2 \lVert \phi \rVert_{\cL^2 (W_\delta)}
\end{equation}
with explicit constants $D_1$ and $D_2$ independent on the scale $L$ and explicit in all the other parameters. The lower bound is a calculation
using the way how the sets $\Lambda_L$ and $X_3$ are chosen.
For the upper bound we use two different Carleman estimate, namely  Ineq.~\eqref{eq:CarlemanBK}
and Proposition~1 in the appendix of \cite{LebeauR-95}, and conclude two interpolation inequalities for the function $F$.
The two interpolation inequalities read as follows with explicitly controlable constants $D_3$, $D_4$ and suitable sets $U_1 \subset U_3 \subset X_3$, see \cite{NakicTTV-prep} for details on how $U_1$, $U_3$ and $X_3$ are chosen.
\begin{proposition} \label{prop:interpolation1}
For all $\delta \in (0,1/2)$, all $(1,\delta)$-equidistributed sequences $z_j$, all measurable
and bounded $V: \RR^d \to \RR$, all $L \in 2 \NN -
1$, all $E \geq 0$ and all
$\phi, F$ as in \eqref{eq:phi} and \eqref{eq:F} we have
\[
  \lVert F \rVert_{H^1 (U_1)} \leq D_3 \lVert (\partial_{d+1} F)(\cdot , 0) \rVert_{\cL^2 (W_\delta)}^{1/2} \lVert F \rVert_{H^1 (U_3)}^{1/2} .
\]
\end{proposition}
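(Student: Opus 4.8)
The plan is to reduce the claimed interpolation inequality to a local statement near each point $(z_j,0)\in\Lambda_{RL}\times\RR$, and there to apply the Carleman estimate~\eqref{eq:CarlemanBK}. Recall the structure of $F$: since each $\funs_k$ is odd, $F$ is odd in the variable $x_{d+1}$, so $F(\cdot,0)\equiv 0$ on $\Lambda_{RL}$ while $\partial_{d+1}F(\cdot,0)=\phi$ there, and $\funs_k''=E_k\funs_k$ forces $\Delta F=V_LF$ on all of $\Lambda_{RL}\times\RR$ (there is no boundary in the $x_{d+1}$-direction, and the extension to $\Lambda_{RL}$ already handles the sites $z_j$ near $\partial\Lambda_L$). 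For the reduction I would cover $U_1$ by cylinders $Q_j:=\Lambda_1(j)\times(-1,1)$, one per equidistributed site $j$, each containing $(z_j,0)$, and fix enlarged cylinders $\widetilde Q_j\subset U_3$ with uniformly bounded overlap $N$. Assuming one has, uniformly in $j$ and $L$,
\[
\lVert F\rVert_{H^1(Q_j)}\leq C\,\lVert\phi\rVert_{\cL^2(B(z_j,\delta))}^{1/2}\,\lVert F\rVert_{H^1(\widetilde Q_j)}^{1/2},
\]
squaring, summing over $j$, using that the balls $B(z_j,\delta)$ are pairwise disjoint (so $\sum_j\lVert\phi\rVert_{\cL^2(B(z_j,\delta))}^2=\lVert\phi\rVert_{\cL^2(W_\delta)}^2$), the bounded overlap, and the Cauchy--Schwarz inequality applied to $\sum_j\lVert\phi\rVert_{\cL^2(B(z_j,\delta))}\lVert F\rVert_{H^1(\widetilde Q_j)}$ produces the Proposition with $D_3=C\,N^{1/4}$.

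For the local estimate I would proceed in two sub-steps. First, \emph{passing from $\phi$ to $F$ near the hyperplane}: since $F(\cdot,0)\equiv 0$, integrating $F(x,t)=\int_0^t\partial_{d+1}F(x,s)\,\drm s$ and $\partial_{d+1}F(x,s)=\phi(x)+\int_0^s\partial_{d+1}^2F(x,r)\,\drm r$, then using $\partial_{d+1}^2F=V_LF-\sum_{i=1}^d\partial_i^2F$ together with a Caccioppoli inequality and interior elliptic estimates for $\Delta F=V_LF$, one obtains a bound of the shape
\[
\lVert F\rVert_{H^1(B((z_j,0),\delta))}\leq C(1+\lVert V\rVert_\infty)\Bigl(\delta^{1/2}\lVert\phi\rVert_{\cL^2(B(z_j,\delta))}+\delta\,\lVert F\rVert_{H^1(B((z_j,0),2\delta))}\Bigr),
\]
where $B((z_j,0),r)\subset\RR^{d+1}$; the precise powers are immaterial, and the second term is a harmless error to be fed into $\lVert F\rVert_{H^1(\widetilde Q_j)}$ at the end.

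Second, \emph{propagation of smallness}: apply~\eqref{eq:CarlemanBK} to $u=\eta F$, with the weight $w_\rho$ centred at $(z_j,0)$ and $\eta\in C_0^\infty$ equal to $1$ on an intermediate annular cylinder covering $Q_j$, vanishing inside $B((z_j,0),\delta)$ and near $\partial\widetilde Q_j$. Expanding $\Delta(\eta F)$ by the product rule, using $\lvert\Delta F\rvert\leq\lVert V\rVert_\infty\lvert F\rvert$ to subsume the principal term into the left-hand side once $\alpha$ is large, estimating the commutator terms $2\nabla\eta\cdot\nabla F+F\Delta\eta$ (supported in the inner $\delta$-ball and the outer shell) by a Caccioppoli inequality, and inserting the two-sided bound $\lvert x\rvert/(\euler\rho)\leq w_\rho(x)\leq\lvert x\rvert/\rho$, one arrives at a three-ball inequality
\[
\lVert F\rVert_{H^1(Q_j)}\leq C\,\lVert F\rVert_{H^1(B((z_j,0),2\delta))}^{\theta}\,\lVert F\rVert_{H^1(\widetilde Q_j)}^{1-\theta},
\]
with $\theta\in(0,1)$ governed by the ratios of the three radii; the sets $U_1\subset U_3\subset X_3$ and the inner radius are chosen precisely so that $\theta=\tfrac12$, in analogy with Hadamard's three circle theorem~\ref{t:hadamard3circle}. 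Combining the two sub-steps and absorbing the $\delta\lVert F\rVert_{H^1}$ error via Young's inequality gives the local estimate, and the reduction above then completes the proof.

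The hard part will be the quantitative bookkeeping. One must optimise over the Carleman parameter $\alpha$ — this is exactly where the $\alpha^3$ scaling in~\eqref{eq:CarlemanBK} is used — and combine it with the $\lVert V\rVert_\infty$-dependence of the Caccioppoli and elliptic estimates, so that $D_3$ comes out explicit, polynomial in $\delta$, and with the correct power of $\lVert V\rVert_\infty$ (the exponent $2/3$ visible in Theorems~\ref{thm:BourgainK-13} and~\ref{thm:NakicTTV}). A secondary, more technical nuisance is the geometric engineering of $U_1$, $U_3$, $X_3$, the cutoff $\eta$, and the transition regions so that $\theta$ equals $\tfrac12$ and so that the sites $z_j$ near $\partial\Lambda_L$ are handled on the same footing as the interior ones.
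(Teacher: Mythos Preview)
Your plan departs from the paper's route, and as written it does not close. The paper says explicitly that the two interpolation inequalities rest on \emph{two different} Carleman estimates: inequality~\eqref{eq:CarlemanBK} and Proposition~1 in the appendix of~\cite{LebeauR-95}. The latter is a Carleman estimate carrying boundary trace terms on a hypersurface, and it is the tool behind Proposition~\ref{prop:interpolation1}, precisely because the claim involves the normal derivative $\partial_{d+1}F(\cdot,0)$ on $\{x_{d+1}=0\}$. The bulk estimate~\eqref{eq:CarlemanBK}, which you propose to use here, is reserved for the purely interior propagation in Proposition~\ref{prop:interpolation2}.

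The reason the swap fails is the exponent. A three-ball inequality derived from~\eqref{eq:CarlemanBK} with inner radius $\sim\delta$ and middle and outer radii of order~$1$ (your $Q_j$, $\widetilde Q_j$) produces $\theta\approx\log(r_3/r_2)/\log(r_3/r_1)\to 0$ as $\delta\to 0$; it cannot be tuned to $\tfrac12$ by adjusting $U_1,U_3$, since the inner scale is dictated by $W_\delta$ while $U_1$ must stay macroscopic for Proposition~\ref{prop:interpolation2} to have content. An inequality $\lVert F\rVert\le C\lVert\phi\rVert^{\theta}\lVert F\rVert^{1-\theta}$ with $\theta<\tfrac12$ cannot be upgraded to exponent $\tfrac12$ with a finite constant, because the missing factor $(\lVert F\rVert/\lVert\phi\rVert)^{1/2-\theta}$ is unbounded. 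In the boundary-Carleman route, by contrast, optimising over $\alpha$ balances terms of the form $e^{C\alpha}\lVert\phi\rVert$ against $e^{-c\alpha}\lVert F\rVert$, giving a fixed $\theta=c/(C+c)$ independent of $\delta$; all $\delta$-dependence is absorbed into $D_3$. A secondary problem is that your Taylor step leaves an additive remainder $\sim\delta^{a}\lVert F\rVert_{H^1(\widetilde Q_j)}$ which, after the three-ball step, becomes a full power of $\lVert F\rVert_{H^1(\widetilde Q_j)}$ on the right; Young's inequality cannot convert that into the multiplicative $\tfrac12$--$\tfrac12$ form.
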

\begin{proposition} \label{prop:interpolation2}
For all $\delta \in (0,1/2)$, all $(1,\delta)$-equidistributed sequences $z_j$, all measurable and bounded $V: \RR^d \to \RR$,
all $L \in 2 \NN - 1$, all $E \geq 0$ and all
$\phi, F$ as in \eqref{eq:phi} and \eqref{eq:F} we have
 \[
\lVert  F \rVert_{H^1 (X_1)}
\leq D_4 \lVert  F \rVert_{H^1 (U_1)}^{\gamma} \lVert  F \rVert_{H^1 (X_3)}^{1- \gamma} .
\]
\end{proposition}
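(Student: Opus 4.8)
\emph{Proof strategy for Proposition~\ref{prop:interpolation2}.}
The plan is to run the classical three--region (Hadamard--type) propagation of smallness argument, now in the $d+1$ variables $(x,x_{d+1})$, exploiting that on a full neighbourhood of $\overline{X_3}$ in $\RR^{d+1}$ the function $F$ from \eqref{eq:F} solves the elliptic equation $\Delta F = V_L F$ with $\Delta=\sum_{i=1}^{d+1}\partial_i^2$ and $V_L$ bounded, so that $\lvert\Delta F\rvert\leq\lVert V\rVert_\infty\,\lvert F\rvert$ there. Since $X_3=\Lambda_{L+18\euler\sqrt d}\times[-9\euler\sqrt d,9\euler\sqrt d]$ sits at distance of order $\sqrt d$ inside $\Lambda_{RL}\times\RR$ (as $R=\lceil 18\euler\sqrt d\rceil$), we may use an interior Carleman estimate, namely the one quoted as Proposition~1 in the appendix of \cite{LebeauR-95}, whose weight function $\varphi$ has nowhere vanishing gradient on the relevant compact region. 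I would choose $\varphi$ so that its level sets sweep from $U_1$ outward across $X_1$ toward $\partial X_3$; concretely, one wants real numbers $\beta_1>\mu>\beta_3$ with $\varphi\geq\mu$ on $X_1$, with $\varphi\leq\beta_1$ on an inner collar $\mathcal C_1\subset U_1$ separating (the part of $U_1$ where $\varphi$ peaks) from $X_1$, and with $\varphi\leq\beta_3$ on an outer collar $\mathcal C_3\subset X_3$ adjacent to $\partial X_3$. That such a weight and such nested sets $U_1\subset U_3\subset X_3$ exist is precisely the content of the geometric choices of \cite{NakicTTV-prep}; because all sets involved have diameter comparable to a dimensional constant, both $\varphi$ and the large-parameter threshold $\tau_0$ of the Carleman estimate can be taken to depend only on $d$ and on $\lVert V\rVert_\infty$.

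Next I would pick a cutoff $\chi\in C_0^\infty(\RR^{d+1})$ vanishing on the inner portion of $U_1$ and near $\partial X_3$, equal to $1$ on a neighbourhood of $X_1$, with $\supp\nabla\chi\subset\mathcal C_1\cup\mathcal C_3$, and apply the Carleman estimate to $w=\chi F$, which is compactly supported in the interior of $X_3$. Keeping both the zeroth-- and first-order terms on the left-hand side and using $\chi\equiv1$, $\varphi\geq\mu$ on $X_1$, one obtains (up to a harmless factor polynomial in $\tau$)
\[
 \tau\,\euler^{2\tau\mu}\,\lVert F\rVert_{H^1(X_1)}^2 \;\lesssim\; \int_{\RR^{d+1}} \euler^{2\tau\varphi}\,\lvert\Delta w\rvert^2 .
\]
Expanding $\Delta w=\chi\Delta F+2\nabla\chi\cdot\nabla F+(\Delta\chi)F$: the first term is bounded by $\lVert V\rVert_\infty\lvert w\rvert$ and is absorbed into the left-hand side once $\tau\geq\tau_0\sim 1+\lVert V\rVert_\infty^{2/3}$ (this is where the weight's $\tau^3$-scaling, hence the exponent $2/3$, enters); the commutator terms are supported on $\mathcal C_1\cup\mathcal C_3$, so with $A:=\lVert F\rVert_{H^1(U_1)}$ and $B:=\lVert F\rVert_{H^1(X_3)}$ they are bounded by $\euler^{2\tau\beta_1}A^2+\euler^{2\tau\beta_3}B^2$ (invoking, should the Carleman estimate be used in a form without a gradient term on its left-hand side, a Caccioppoli inequality to replace $\lvert\nabla F\rvert$ on a collar by $\lvert F\rvert$ on a slightly larger set still inside $U_1$, resp.\ $X_3$, at a cost depending only on $d$ and $\lVert V\rVert_\infty$). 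Dividing by $\tau\,\euler^{2\tau\mu}$ yields
\[
 \lVert F\rVert_{H^1(X_1)}^2 \;\lesssim\; \euler^{2\tau(\beta_1-\mu)}A^2 + \euler^{-2\tau(\mu-\beta_3)}B^2 \qquad\text{for all }\tau\geq\tau_0 .
\]

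Finally I would optimise in $\tau$. Since $U_1\subset X_3$ we have $A\leq B$. If $\log(B/A)\geq(\beta_1-\beta_3)\tau_0$, then $\tau:=(\beta_1-\beta_3)^{-1}\log(B/A)$ is $\geq\tau_0$ and balances the two terms, giving $\lVert F\rVert_{H^1(X_1)}^2\lesssim A^{2\gamma}B^{2(1-\gamma)}$ with $\gamma:=(\mu-\beta_3)/(\beta_1-\beta_3)\in(0,1)$, a purely dimensional number; the leftover polynomial-in-$\tau$ factor is $(\log(B/A))^{O(1)}$ and is absorbed by an arbitrarily small change of $\gamma$. If instead $\log(B/A)<(\beta_1-\beta_3)\tau_0$, one simply uses $\lVert F\rVert_{H^1(X_1)}\leq B\leq\euler^{(\beta_1-\beta_3)\tau_0}A$, hence $\lVert F\rVert_{H^1(X_1)}\leq\euler^{\gamma(\beta_1-\beta_3)\tau_0}A^{\gamma}B^{1-\gamma}$, again with a constant depending only on $d$ and $\lVert V\rVert_\infty$. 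Collecting constants gives the inequality with $D_4=D_4(d,\lVert V\rVert_\infty)$ and $\gamma=\gamma(d)$, uniformly in $E$ and $L$.

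The step I expect to be the real obstacle is not the Carleman machinery or the $\tau$-optimisation --- these are routine once the scaffolding is in place --- but the geometric bookkeeping: one must construct $\varphi$, the collars $\mathcal C_1,\mathcal C_3$, and the sets $U_1\subset U_3\subset X_3$ with the precise separation $\beta_1>\mu>\beta_3$ listed above relative to the concrete sets $X_1=\Lambda_L\times[-1,1]$ and $X_3=\Lambda_{L+18\euler\sqrt d}\times[-9\euler\sqrt d,9\euler\sqrt d]$, and --- crucially --- one must check that every constant produced is independent of the scale $L$. The latter is what forces the particular choices $R=\lceil 18\euler\sqrt d\rceil$ and the enlargement from $\Lambda_L$ to $\Lambda_{L+18\euler\sqrt d}$ by only boundedly many unit cells, and it is the reason this interpolation inequality, unlike the Donnelly--Fefferman-type bound in Theorem~\ref{thm:DonnellyF1}, carries no dependence on the diameter of $\Lambda_L$.
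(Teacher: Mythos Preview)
Your Carleman-plus-cutoff-plus-$\tau$-optimisation template is the right shape for a three-region propagation of smallness, and the paper does indeed route Proposition~\ref{prop:interpolation2} through a Carleman estimate. The genuine gap is in your sentence ``because all sets involved have diameter comparable to a dimensional constant'': this is false. The sets $X_1=\Lambda_L\times[-1,1]$ and $X_3=\Lambda_{L+18\euler\sqrt d}\times[-9\euler\sqrt d,9\euler\sqrt d]$ have diameter of order $L$, and $U_1$ (not specified in this survey, but fixed in \cite{NakicTTV-prep}) is a union of roughly $L^d$ unit-scale pieces, one near each lattice site. A single smooth weight $\varphi$ with $\varphi\ge\mu$ on all of $X_1$ and $\varphi>\mu$ only on an inner collar $\mathcal C_1\subset U_1$ would have to peak on each of the $\sim L^d$ components of $U_1$; then $\nabla\varphi$ vanishes at $\sim L^d$ interior points, which destroys the Carleman estimate. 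Even if you ignore this and naively rescale a fixed weight to a box of side $L$, the exponent $\gamma$ and the constant $D_4$ produced by your optimisation would depend on $L$, which is precisely what the proposition forbids.

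The paper's route (following \cite{JerisonL-99} and borrowing the multiscale bookkeeping of \cite{GerminetK-13b,Rojas-MolinaV-13}) is local-to-global. One applies a \emph{unit-scale} interpolation inequality, derived from the Carleman estimate~\eqref{eq:CarlemanBK} on a $(d{+}1)$-dimensional ball of radius comparable to $\sqrt d$, around each lattice site $k\in\ZZ^d\cap\Lambda_L$, obtaining
\[
\lVert F\rVert_{H^1(\Lambda_1(k)\times[-1,1])}\le D\,\lVert F\rVert_{H^1(U_1(k))}^{\gamma}\,\lVert F\rVert_{H^1(X_3(k))}^{1-\gamma}
\]
with $\gamma$ and $D$ depending only on $d$ and $\lVert V\rVert_\infty$, where $U_1(k)$ and $X_3(k)$ are the unit-scale pieces of $U_1$ and $X_3$ near $k$. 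Because the enlargement from $\Lambda_L$ to $\Lambda_{L+18\euler\sqrt d}$ is by boundedly many layers of unit cells, every local $X_3(k)$ sits inside $X_3$, and the covering multiplicity of the $X_3(k)$ is bounded independently of $L$. Summing over $k$ and applying H\"older's inequality $\sum a_k^{\gamma}b_k^{1-\gamma}\le(\sum a_k)^{\gamma}(\sum b_k)^{1-\gamma}$ then yields the global inequality with the \emph{same} $\gamma$ and an $L$-independent $D_4$. Your closing paragraph correctly flags scale-independence as the crux, but the mechanism that actually delivers it is this cell-by-cell summation, not a single global weight; the geometric construction you describe as ``the real obstacle'' is in fact impossible as stated.
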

Let us now show how these two interpolation inequalities are applied to obtain the announced upper bound \eqref{eq:sandwich}.
Again, a calculation shows $\lVert F \rVert_{H^1 (X_3)}  \leq D_5 \lVert F \rVert_{H^1 (X_1)}$. Applying both interpolation inequalities we conclude
\[
 \lVert F \rVert_{H^1 (X_3)} \leq D_5 D_4 D_3 \lVert F \rVert_{H^1 (X_3)}^{1- \gamma}  \lVert  (\partial_{d+1} F)(\cdot , 0) \rVert_{\cL^2 (W_\delta)}^{\gamma / 2} \lVert F \rVert_{H^1 (U_3)}^{\gamma /2} .
\]
Since $U_3 \subset X_3$ we find
\[
 \lVert F \rVert_{H^1 (X_3)} \leq (D_5 D_4 D_3)^{2/\gamma}  \lVert (\partial_{d+1} F)(\cdot , 0) \rVert_{\cL^2 (W_\delta)} .
\]
Since $\partial_{d+1} F (\cdot , 0) = \phi$ this provides the upper bound
and the result follows by estimating carefully all the constants $D_i$, $i \in \{1,\ldots , 5\}$, and $\gamma$.

In a more elementray setting this startegy of proof has been developed already in \cite{JerisonL-99}.
Additionally, \cite{NakicTTV-prep} uses ideas from \cite{GerminetK-13b,Rojas-MolinaV-13}.
%
%
%
%
%
%
\section{Applications}
\subsection{Random Schr\"odinger operators}
This section is concerned with Schr\"odinger operators with random potential.
Such operators serve as quantum mechanical models of disordered condensed matter.
Spectral and analytical properties of solutions of corresponding elliptic partial differential equation
are studied in order to gain insight in the evolution behaviour of solutions of the corresponding
time dependent Schr\"odinger equation. This in turn allows for conclusions concerning the transport properties of the modelled material.
The most studied type of random Schr\"odinger operator is the alloy model, also called continuum Anderson model.
We will be concerned with a different type of random operator, namely the random breather model.
It is analytically more challenging, due to the non-linear influence of the random variables.
In the mathematical literature random breather potentials have been has been first considered in \cite{CombesHM-96}, and studied in \cite{CombesHN-01} and
 \cite{KirschV-10}. However, all these papers assumed unnatural regularity conditions, excluding the most basic and standard
 type of single site potential, where $u$ equals the characteristic function of a ball or a cube.
For more details see \cite{NakicTTV-14,NakicTTV-prep}.
\par
Consider a sequence $\omega = (\omega_j)_{j \in \ZZ^d}$ of positive, independent and identically distributed random variables.
We assume that the distribution measure $\mu$ of $\omega_j$ is supported in an interval $[\omega_-, \omega_+]$ satisfying
$0 \leq \omega_{-} < \omega_{+} < 1/2$.
The \emph{standard random breather potential} is the function
\begin{equation*}
\label{eq:ballRBP}
 V_\omega(x) = \sum_{j \in \ZZ^d} \chi_{B(j,\omega_j)} (x).
\end{equation*}
while the family $(H_\omega)_\omega$ with $H_{\omega} := -\Delta +V_\omega$ on $\RR^d$ is called \emph{standard random breather model}.
Note that the random potential is non-negative and uniformly bounded, and thus the operator $H_\omega$ is self-adjoint for almost every $\omega \in \Omega$.
We also define for $L \in \NN$ the operator $H_{\omega, L}$ as the restriction of $H_\omega$ onto $\Lambda_L$ with Dirichlet boundary conditions.
$H_{\omega,L}$ is a lower semi-bounded operator with compact resolvent.
Hence its spectrum consists of an infinite sequence of (random) isolated eigenvalues of finite multiplicity $E_1^L \leq E_2^L \leq E_3^L \leq \ldots$.
\par
Due to ergodicity the spectrum of the random operator $H_\omega$ on the full space is deterministic.
This means that there is $\Sigma \subset \RR$ such that $\sigma(H_\omega) = \Sigma$, almost surely.
Analogous statements hold for the absolutely continuous, the singular continuous, and the pure point part of the spectrum.
For most truly random models the singular continuous component of the spectrum is empty,
so the prominent question is to determine whether in a certain energy region the Schr\"odinger operator exhibits
pure point or absolutely continuous spectrum, corresponding to localized or delocalized states.
A mixture of both types of spectrum in the same energy region would be considered as a physical anomaly.
In what we want to discuss, a central quantity is the integrated density of states (IDS)
or spectral distribution function $N(E)$.
It is a function of the energy and measures the number of energy states per unit volume up to that energy.
The definition is as follows
\[
 N(E) := \lim_{L \to \infty} \frac{ \EE \left[ \mathrm{Tr} \left[ \chi_{(- \infty, E]}(H_{\omega,L}) \right] \right]}{L^d}, \quad E \in \RR .
\]
A priori it is not clear whether the limit exists but in many situations,
namely when the family of random operators is ergodic, as is the case here, this is a consequence of ergodic theorems.
See the monographs \cite{Stollmann-01,Veselic-08} for more details and further references.
\par
We are interested in Wegner estimates, that are estimates on the expected number
of eigenvalues within an interval $[E- \epsilon, E + \epsilon]$ in terms of $\epsilon$ and $L^d$, the volume of $\Lambda$.
Such estimates play an important part role in proving localization,
that is the almost sure existence of pure point spectrum of $H_\omega$ near the bottom of $\Sigma$.
Moreover, our Wegner estimate implies that the integrated density of states is H\"older continuous.
\par
In order to prove a Wegner estimate, we need to understand how the eigenvalues $E_n^L$, $n \in \NN$ of $H_{\omega, L}$ behave if we increase all $\omega_j$ by a small amount $\delta > 0$.
We use the notation $H_{\omega + \delta, L}$ for the operator $H_{\omega, \delta}$ where all $\omega_j$ have been replaced by $\omega_j + \delta$.
\begin{lemma}[Eigenvalue lifting for the standard random breather model]
	Let $H_{\omega,L}$ be as above and assume that $\omega \in [\omega_{-}, \omega_{+}]^{\ZZ^d}$, $\delta \leq 1/2-\omega_+$.
	Then, for all $L \in \NN$ and all $n \in \NN$ with $E_n^L(\omega) \in (- \infty , E_0]$ we have
	\begin{equation*}
	E_n^L(\omega + \delta) \geq E_n^L(\omega) + \left(\frac{\delta}{2} \right)^{\bigl[K \bigl(2+ \lvert E_0+1
	\rvert^{1/2} \bigr)\bigr]},
	\end{equation*}
	where $K$ is the constant from Theorem~\ref{thm:NakicTTV}. In particular, $K$ does not depend on $L$.
\end{lemma}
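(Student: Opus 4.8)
The plan is to realise the potential increment $V_{\omega+\delta}-V_\omega$ as a nonnegative multiplication operator which dominates the characteristic function of an equidistributed family of small balls, to control that operator from below on suitable spectral subspaces by the scale-free unique continuation estimate of Theorem~\ref{thm:NakicTTV}, and to feed this into a standard eigenvalue-lifting argument based on monotonicity of eigenvalues along an interpolation of operators. First I would analyse the potential difference. Since $\omega_j+\delta\le\omega_++\delta\le 1/2$, the balls $B(j,\omega_j+\delta)$, $j\in\ZZ^d$, are pairwise disjoint and
\[
 V_{\omega+\delta}-V_\omega=\sum_{j\in\ZZ^d}\bigl(\chi_{B(j,\omega_j+\delta)}-\chi_{B(j,\omega_j)}\bigr)\ge 0,
\]
where the $j$-th summand equals $1$ on the open annulus $\{\,\omega_j<\lvert x-j\rvert<\omega_j+\delta\,\}\subset\Lambda_1+j$. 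Choosing $z_j:=j+(\omega_j+\delta/2)e_1$, the ball $B(z_j,\delta/2)$ lies inside that annulus and inside $\Lambda_1+j$, so $(z_j)_{j\in\ZZ^d}$ is a $(1,\delta/2)$-equidistributed sequence; with the associated set $W_{\delta/2}=\bigcup_{j\in\ZZ^d}B(z_j,\delta/2)\cap\Lambda_L$ one gets $V_{\omega+\delta}-V_\omega\ge\chi_{W_{\delta/2}}$ on $\Lambda_L$. Note also that $H_{\omega+\delta,L}=H_{\omega,L}+(V_{\omega+\delta}-V_\omega)$ and, more generally, $H_{\omega,L}+t(V_{\omega+\delta}-V_\omega)=-\Delta+V^{(t)}$ with $V^{(t)}:=(1-t)V_\omega+tV_{\omega+\delta}$ satisfying $0\le V^{(t)}\le 1$.

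Next I would apply Theorem~\ref{thm:NakicTTV} with $G=1$, with $\delta$ replaced by $\delta/2$, with potential $V^{(t)}$ and energy $E_0+1$: for every $t\in[0,1]$ and every $\phi\in\ran\chi_{(-\infty,E_0+1]}\bigl(H_{\omega,L}+t(V_{\omega+\delta}-V_\omega)\bigr)$,
\[
 \bigl\langle\phi,(V_{\omega+\delta}-V_\omega)\phi\bigr\rangle\ \ge\ \lVert\phi\rVert_{\cL^2(W_{\delta/2})}^2\ \ge\ \Bigl(\tfrac{\delta}{2}\Bigr)^{K\bigl(1+\lVert V^{(t)}\rVert_\infty^{2/3}+\sqrt{E_0+1}\bigr)}\lVert\phi\rVert^2\ \ge\ \kappa\,\lVert\phi\rVert^2,
\]
where $\kappa:=\bigl(\delta/2\bigr)^{K\bigl(2+\lvert E_0+1\rvert^{1/2}\bigr)}$; here I used $\lVert V^{(t)}\rVert_\infty^{2/3}\le 1$ and $\delta/2<1$. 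Crucially $K=K(d)$ is the constant of Theorem~\ref{thm:NakicTTV}, the bound holds uniformly in $t\in[0,1]$ because each $H_{\omega,L}+t(V_{\omega+\delta}-V_\omega)$ is again a Schr\"odinger operator with potential bounded by $1$, and $\kappa<1$.

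Now the eigenvalue lifting. Denote by $E_n(t)$ the $n$-th eigenvalue of $H_{\omega,L}+t(V_{\omega+\delta}-V_\omega)$, so $E_n(0)=E_n^L(\omega)$ and $E_n(1)=E_n^L(\omega+\delta)$; this function is non-decreasing (the perturbation is $\ge 0$) and Lipschitz, hence absolutely continuous, and at every point of differentiability $E_n'(t)=\bigl\langle\psi_t,(V_{\omega+\delta}-V_\omega)\psi_t\bigr\rangle$ for some normalised eigenfunction $\psi_t$ of $H_{\omega,L}+t(V_{\omega+\delta}-V_\omega)$ to the eigenvalue $E_n(t)$. Fix $n$ with $E_n(0)\le E_0$ and set $t^\ast:=\sup\{t\in[0,1]\mid E_n(t)\le E_0+1\}>0$; by continuity and monotonicity $\{t\mid E_n(t)\le E_0+1\}=[0,t^\ast]$. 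On $(0,t^\ast)$ one has $E_n(t)\le E_0+1$, so $\psi_t$ lies in the relevant spectral subspace and $E_n'(t)\ge\kappa$; integrating gives $E_n(t^\ast)\ge E_n(0)+\kappa t^\ast$. If $t^\ast=1$ this is exactly $E_n^L(\omega+\delta)\ge E_n^L(\omega)+\kappa$. If $t^\ast<1$, then $E_n(t^\ast)=E_0+1$, whence by monotonicity $E_n^L(\omega+\delta)=E_n(1)\ge E_0+1>E_n^L(\omega)+\kappa$ since $E_n^L(\omega)\le E_0$ and $\kappa<1$. In either case $E_n^L(\omega+\delta)\ge E_n^L(\omega)+(\delta/2)^{K(2+\lvert E_0+1\rvert^{1/2})}$.

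The main obstacle is precisely the lifting step. The naive hope — that $\langle\phi,(V_{\omega+\delta}-V_\omega)\phi\rangle\ge\kappa\lVert\phi\rVert^2$ merely on $\ran\chi_{(-\infty,E_0]}(H_{\omega,L})$ already forces a lift of each low-lying eigenvalue by $\kappa$ — is false: the off-diagonal couplings of the perturbation can partially cancel the lift, and an eigenvalue starting just below $E_0$ can be dragged past $E_0$, where the unique continuation bound with that energy no longer controls it. The remedy is to buffer the threshold by $1$ (work with $E_0+1$), which explains why the exponent of the final bound carries $\lvert E_0+1\rvert^{1/2}$ rather than $\sqrt{E_0}$, and to invoke the unique continuation estimate along the whole interpolating family $H_{\omega,L}+t(V_{\omega+\delta}-V_\omega)$, $t\in[0,1]$ — which is legitimate exactly because each member of this family is a Schr\"odinger operator with the same sup-norm bound on its potential. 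The remaining steps (the geometry of inscribing $\delta/2$-balls in the annuli, the Lipschitz/absolute-continuity and Feynman–Hellmann facts for eigenvalues, and the bookkeeping of constants) are routine.
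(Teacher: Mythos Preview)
Your proof is correct, but the paper's argument is simpler and avoids the interpolation entirely. The paper observes, as you do, that $V_{\omega+\delta}-V_\omega\ge\chi_{W_{\delta/2}}$ and that the perturbation is bounded by $1$, so $E_n^L(\omega+\delta)\le E_n^L(\omega)+1\le E_0+1$. But then it applies Theorem~\ref{thm:NakicTTV} \emph{only once}, to the perturbed operator $H_{\omega+\delta,L}$: for every normalised $\phi$ in the span of the first $n$ eigenfunctions $\phi_1,\dots,\phi_n$ of $H_{\omega+\delta,L}$ one has $\langle\phi,(V_{\omega+\delta}-V_\omega)\phi\rangle\ge\langle\phi,\chi_{W_{\delta/2}}\phi\rangle\ge\kappa$. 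A single min-max step finishes the job:
\[
E_n^L(\omega+\delta)=\max_{\substack{\phi\in\operatorname{Span}\{\phi_1,\dots,\phi_n\}\\\lVert\phi\rVert=1}}\bigl[\langle\phi,H_{\omega,L}\phi\rangle+\langle\phi,(V_{\omega+\delta}-V_\omega)\phi\rangle\bigr]
\ge\kappa+\inf_{\dim\mathcal D=n}\max_{\substack{\phi\in\mathcal D\\\lVert\phi\rVert=1}}\langle\phi,H_{\omega,L}\phi\rangle=\kappa+E_n^L(\omega).
\]
So the obstacle you identify --- that a UCP bound on $\ran\chi_{(-\infty,E_0]}(H_{\omega,L})$ alone does not directly lift eigenvalues --- is real, but the paper's remedy is not to interpolate; it is to switch to the spectral subspace of the \emph{perturbed} operator, where the UCP still applies (same sup-norm bound on the potential, energy threshold raised to $E_0+1$) and where min-max for the unperturbed operator gives the lower bound for free. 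Your route via the Feynman--Hellmann derivative along the whole family $t\mapsto H_{\omega,L}+t(V_{\omega+\delta}-V_\omega)$ works and is a standard alternative, but it costs you the regularity discussion for $E_n(t)$ at crossings and a case split on $t^\ast$, none of which the paper needs.
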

\begin{proof}
The function $V_{\omega + \delta} - V_\omega$ is the characteristic function of a disjoint union of annuli each of which has width $\delta$, see Figure~\ref{fig:annuli}. Every such annulus contains a ball of radius $\delta/2$, see Figure~\ref{fig:annuli} whence we have $V_{\omega + \delta} - V_\omega \geq \chi_{W_{\delta/2}}$ where $\chi_{W_{\delta/2}}$ is the characteristic function of $W_{\delta/2}$, a union of $\delta$-balls, centered at a $(1,\delta)$-equidistributed sequence.
\begin{figure}\centering
\begin{tikzpicture}
\draw (-0.1, -0.1) grid (5 + 0.1, 5 + 0.1);
\foreach \x in {0,...,4}{
	\foreach \y in {0, ..., 4}{
		\pgfmathsetmacro{\w}{0.15*(rand+1)};
			\filldraw[thick, pattern=north east lines,  even odd rule] (\x + 0.5, \y + 0.5 ) circle (\w + 0.2) -- (\x + 0.5, \y + 0.5) circle (\w);	
}}		
\begin{scope}[xshift=7cm,scale = 2]
\filldraw[thick, fill = black!70] (0.1, 2.25) circle (0.1);
\draw (0.5, 2.25) node[right] {$\chi_{W_{\delta/2}}$};
\filldraw[thick, pattern=north east lines] (0,1.65) rectangle (0.4,1.85);
\draw (0.5, 1.75) node[right] {$V_{\omega + \delta} - V_\omega$};
\draw (-0.05, -0.05) grid (2.05, 1.05);
\filldraw[thick, pattern=north east lines,  even odd rule] (0.5, 0.5) circle (0.45) -- (0.5, 0.5) circle (0.25);
\filldraw[thick, pattern=north east lines,  even odd rule] (1.5, 0.5) circle (0.3) -- (1.5, 0.5) circle (0.1);
\filldraw[thick, fill = black!70] (0.5, 0.15) circle (0.1);
\filldraw[thick, fill = black!70] (1.60, 0.67320508) circle (0.1);
\end{scope}
\end{tikzpicture}
%
\caption{Illustration of the increments $V_{\omega + \delta} - V_\omega$ and the choice of $W_{\delta/2}$\label{fig:annuli}}
\end{figure}
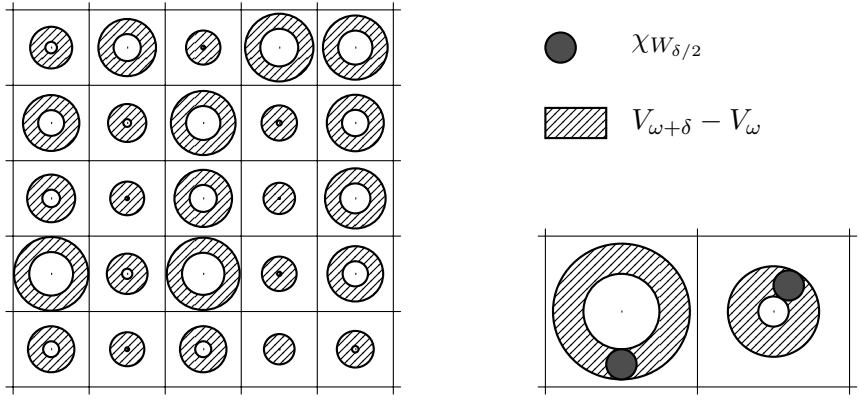
We denote the eigenfunctions, corresponding to $E_i^L(\omega + \delta)$ by $\phi_i^L$, $i \in \NN$.
Since $E_n^L(\omega + \delta) \leq E_n^L(\omega) + 1 \leq E_0 + 1$, we have by Theorem \ref{thm:NakicTTV} for all $\phi \in \mathrm{Span}\{\phi_1, ..., \phi_n\}$ with $\lVert \phi \rVert = 1$
\[
\left\langle \phi , \chi_{W_{\delta/2,L}} \phi \right\rangle \geq \left(\frac{\delta}{2} \right)^{\bigl[K \bigl(2+ \lvert E_0+1 \rvert^{1/2} \bigr)\bigr]}.
\]
Using this and the variational characterization of eigenvalues we estimate
		\begin{align*}
		E_n^L(\omega + \delta)
		& = \left\langle \phi_n, H_{\omega+\delta,L}\phi_n \right\rangle\\
		& = \max_{\phi \in \mathrm{Span}\{\phi_1, ..., \phi_n\}, \lVert \phi \rVert = 1} \left[ \left\langle \phi, H_{\omega,L}\phi \right\rangle + \left\langle \phi , \left( V_ {\omega + \delta,L} - V_{\omega,L} \right) \phi \right\rangle \right]\\
		& \geq \max_{\phi \in \mathrm{Span}\{\phi_1, ..., \phi_n\}, \lVert \phi \rVert = 1} \left[ \left\langle \phi, H_{\omega,L}\phi \right\rangle + \left\langle \phi , \chi_{W_{\delta/2,L}} \phi \right\rangle \right]\\
		& \geq \inf_{\mathrm{dim} \mathcal{D} = n} \max_{\phi \in \mathcal{D}, \lVert \phi \rVert = 1} \left[ \left\langle \phi, H_{\omega,L}\phi \right\rangle + \left(\frac{\delta}{2} \right)^{\bigl[K \bigl(2+ \lvert E_0+1 \rvert^{1/2} \bigr)\bigr]} \right]\\
		& = E_n^L(\omega) + \left(\frac{\delta}{2} \right)^{\bigl[K \bigl(2+ \lvert E_0+1 \rvert^{1/2} \bigr)\bigr]}.
		\qedhere
		\end{align*}
\end{proof}
Combining this Lemma with the method from \cite{HundertmarkKNSV-06} that was developed for
random Sch\"odinger operators with alloy type potential we obtain in \cite{NakicTTV-14, NakicTTV-prep} a Wegner estimate for the standard random breather model.
\begin{theorem}[Wegner estimate for the standard random breather model]
\label{thm:Wegner}
Assume that $\mu$ has a
bounded density $\nu$ supported in $[\omega_{-}, \omega_{+}]$ with $0 \leq \omega_{-} < \omega_{+} < 1/2$.
Fix $E_0 \in \RR$. Then there are
$C=C(d,E_0)$ and $\epsilon_{\max}=\epsilon_{\max}(d,E_0  ,\omega_{+}) \in (0,\infty)$
such that for all $\epsilon \in (0,\epsilon_{\max}]$ and $E \geq 0$ with
$[E-\epsilon, E+\epsilon] \subset (- \infty , E_0]$,
we have
\begin{equation*}
\EE \left[ \mathrm{Tr} \left[ \chi_{[E- \epsilon, E + \epsilon]}(H_{\omega,L}) \right] \right]
\leq
C
\lVert \nu \rVert_\infty
\epsilon^{[K(2+{\lvert E_0 + 1 \rvert}^{1/2})]^{-1}}
\left\lvert\ln \epsilon \right\rvert^d L^d
\end{equation*}
where $K$ is the constant from Theorem \ref{thm:NakicTTV}.
The constant $\epsilon_{\max}$ can be chosen as
\begin{equation*}
\epsilon_{\max} =
\frac{1}{4}
\left( \frac{1/2 - \omega_{+}}{2} \right)^{K(2+{\lvert E_0+1 \rvert}^{1/2})}.
\end{equation*}
\end{theorem}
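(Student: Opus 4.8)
The plan is to combine the eigenvalue-lifting lemma with the spectral-averaging / spectral-shift-function method of \cite{HundertmarkKNSV-06}. Fix $E_0$ and an interval $I=[E-\epsilon,E+\epsilon]\subset(-\infty,E_0]$, and abbreviate $\kappa(\delta):=(\delta/2)^{[K(2+\lvert E_0+1\rvert^{1/2})]}$ for the lifting amount furnished by the eigenvalue-lifting lemma. The first step is to fix the shift parameter $\delta=\delta(\epsilon)\in(0,1/2-\omega_+]$ so that $\kappa(\delta)\geq 4\epsilon$. Such a $\delta$ exists exactly when $4\epsilon\leq\kappa(1/2-\omega_+)$, i.e.\ when $\epsilon\leq\epsilon_{\max}=\tfrac14\bigl((1/2-\omega_+)/2\bigr)^{K(2+\lvert E_0+1\rvert^{1/2})}$, which is the admissible range claimed in the theorem; the cheapest such $\delta$ satisfies $\delta\asymp\epsilon^{1/[K(2+\lvert E_0+1\rvert^{1/2})]}$, and this is the source of the power of $\epsilon$ in the final bound.

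The second step is to pass from the lifting estimate to an eigenvalue count. Because $\lVert V_{\omega+\delta}-V_\omega\rVert_\infty\leq1$, every eigenvalue of $H_{\omega,L}$ lying below $E_0$ still lies below $E_0+1$ after the shift by $\delta$, so the eigenvalue-lifting lemma applies and yields $E_n^L(\omega+\delta)\geq E_n^L(\omega)+\kappa(\delta)$ for all $n$ with $E_n^L(\omega)\leq E_0$. Writing $N_{\omega,L}(a):=\mathrm{Tr}\,\chi_{(-\infty,a]}(H_{\omega,L})$ and using $E+\epsilon\leq E_0$, any eigenvalue $E_n^L(\omega)\in I$ satisfies $E_n^L(\omega)\leq E+\epsilon$ and, by the lift, $E_n^L(\omega+\delta)\geq(E-\epsilon)+4\epsilon=E+3\epsilon>E+\epsilon$. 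Since $V_{\omega+\delta}\geq V_\omega$ the shift is monotone, so $E_n^L(\omega+\delta)\geq E_n^L(\omega)$ for every $n$ and therefore $N_{\omega,L}(a)-N_{\omega+\delta,L}(a)=\#\{n:E_n^L(\omega)\leq a<E_n^L(\omega+\delta)\}$; applying this with $a=E+\epsilon$ gives, pointwise in $\omega$,
\[
 \mathrm{Tr}\,\chi_{[E-\epsilon,E+\epsilon]}(H_{\omega,L})\ \leq\ N_{\omega,L}(E+\epsilon)-N_{\omega+\delta,L}(E+\epsilon).
\]

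The third step is to take the expectation of the right-hand side, i.e.\ to bound the expected number of eigenvalues that cross the level $E+\epsilon$ when all coupling constants are raised by $\delta$. This is precisely the quantity governed by the method of \cite{HundertmarkKNSV-06}: combining the classical spectral-averaging estimate (which exploits that $\mu$ has bounded density $\nu$ and that $H_{\omega,L}$ is monotone in each $\omega_j$) with a Combes--Thomas covering argument to compensate for the fact that the relevant single-site increments do not overlap, one obtains a bound of the form $C(d,E_0)\,\lVert\nu\rVert_\infty\,\delta\,\lvert\ln\delta\rvert^{d}\,L^d$. Here $L^d$, with an $E_0$-dependent constant, comes from a Weyl-type upper bound on the number of eigenvalues of $H_{\omega,L}$ below $E_0+1$; the factor $\lvert\ln\delta\rvert^d$ is the covering loss; and $\lVert\nu\rVert_\infty$ enters through the averaging. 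Substituting $\delta\asymp\epsilon^{1/[K(2+\lvert E_0+1\rvert^{1/2})]}$ and absorbing constants into $C(d,E_0)$ produces the asserted estimate.

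I expect the last step to be the main obstacle. The method of \cite{HundertmarkKNSV-06} was designed for alloy-type models, where raising all $\omega_j$ by $\delta$ changes the Hamiltonian by the $\omega$-independent, linear-in-$\delta$ perturbation $\delta\sum_j u(\cdot-j)$; for the standard random breather model the increment $V_{\omega+\delta}-V_\omega=\sum_j\chi_{B(j,\omega_j+\delta)\setminus B(j,\omega_j)}$ is instead a union of annuli whose location and width still depend on the random variables. Running the averaging argument through in this nonlinear situation---using the finite-difference structure of the annuli, the monotonicity of $H_{\omega,L}$ in each $\omega_j$, and a change of variables that reconciles each annulus with its averaging parameter---while keeping every constant independent of the scale $L$, is the part that requires genuine work; the choice of $\delta$, the application of the eigenvalue-lifting lemma, and the final accounting of constants are routine. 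Complete proofs are given in \cite{NakicTTV-14,NakicTTV-prep}.
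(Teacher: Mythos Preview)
Your proposal is correct and follows exactly the approach the paper indicates: the paper does not give a self-contained proof of this theorem but simply states that it follows by ``combining this Lemma with the method from \cite{HundertmarkKNSV-06}'' and refers to \cite{NakicTTV-14,NakicTTV-prep} for details. Your sketch fleshes out precisely this combination---choosing $\delta$ so that the lift exceeds $4\epsilon$, converting the lift into a pointwise bound on the eigenvalue count via $N_{\omega,L}(E+\epsilon)-N_{\omega+\delta,L}(E+\epsilon)$, and then invoking the spectral-averaging machinery---and you correctly identify that the nontrivial adaptation of \cite{HundertmarkKNSV-06} to the $\omega$-dependent annular increments is the genuine work deferred to the cited papers.
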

Here $\EE$ denotes the expectation w.r.t.~the random variables $\omega_j, j\in \ZZ^d$.
From our Wegner estimate, we can deduce that the IDS is locally H\"older continuous.
\begin{corollary}[H\"older continuity of the IDS]
For every $E_0 \in \RR$ there are a constants $\tilde C,c > 0$ such that for all $E_1 < E_2 \leq E_0$ we have
\[
 \lvert N(E_2) - N(E_1) \rvert \leq C \cdot \lvert E_2 - E_1 \rvert^c .
\]
\end{corollary}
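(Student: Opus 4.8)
The plan is to obtain the Hölder bound as a direct consequence of the Wegner estimate in Theorem~\ref{thm:Wegner}, combined with the monotonicity and nonnegativity of $N$. I would begin with two preliminary observations. Since the breather potential satisfies $V_\omega\geq 0$, we have $H_{\omega,L}\geq 0$, hence $\chi_{(-\infty,E]}(H_{\omega,L})=0$ and $N(E)=0$ for $E<0$; consequently we may replace $E_1$ by $\max\{E_1,0\}$, which does not increase $E_2-E_1$ nor $\lvert N(E_2)-N(E_1)\rvert$, so it suffices to treat $0\leq E_1<E_2\leq E_0$. Moreover $E\mapsto N(E)$ is nondecreasing, and $N(E_0)<\infty$ — the latter follows, e.g., by summing the Wegner estimate over a fixed finite family of intervals of length $2\epsilon_{\max}$ covering $[0,E_0]$, dividing by $L^d$ and passing to the limit. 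Finally, by linearity of the trace,
\[
 N(E_2)-N(E_1)=\lim_{L\to\infty}\frac{\EE\bigl[\mathrm{Tr}\bigl[\chi_{(E_1,E_2]}(H_{\omega,L})\bigr]\bigr]}{L^d}\geq 0 .
\]

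The main case is $E_2-E_1\leq 2\epsilon_{\max}$. Here I would set $E:=(E_1+E_2)/2\geq 0$ and $\epsilon:=(E_2-E_1)/2\in(0,\epsilon_{\max}]$, so that $[E-\epsilon,E+\epsilon]=[E_1,E_2]\subset(-\infty,E_0]$ and $\chi_{(E_1,E_2]}\leq\chi_{[E-\epsilon,E+\epsilon]}$. Theorem~\ref{thm:Wegner} then gives, uniformly in $L$,
\[
 \frac{\EE\bigl[\mathrm{Tr}\bigl[\chi_{(E_1,E_2]}(H_{\omega,L})\bigr]\bigr]}{L^d}\leq C\,\lVert\nu\rVert_\infty\,\epsilon^{a}\,\lvert\ln\epsilon\rvert^{d},
 \qquad a:=\bigl[K\bigl(2+\lvert E_0+1\rvert^{1/2}\bigr)\bigr]^{-1},
\]
and letting $L\to\infty$ yields $N(E_2)-N(E_1)\leq C\lVert\nu\rVert_\infty\,\epsilon^{a}\lvert\ln\epsilon\rvert^{d}$. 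To convert this into a genuine Hölder estimate I would fix any exponent $c\in(0,a)$; since the map $t\mapsto t^{\,a-c}\lvert\ln t\rvert^{d}$ is bounded on $(0,1/2]$, there is a constant $C'$ with $\epsilon^{a}\lvert\ln\epsilon\rvert^{d}\leq C'\epsilon^{c}\leq C'\,(E_2-E_1)^{c}$, whence $\lvert N(E_2)-N(E_1)\rvert\leq C''\,(E_2-E_1)^{c}$ in this case.

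For the remaining case $E_2-E_1>2\epsilon_{\max}$ I would simply invoke monotonicity and the finiteness of $N(E_0)$:
\[
 0\leq N(E_2)-N(E_1)\leq N(E_0)\leq \frac{N(E_0)}{(2\epsilon_{\max})^{c}}\,(E_2-E_1)^{c} .
\]
Taking the maximum of the two constants — both depending only on $d$, $E_0$, $\omega_+$ and $\lVert\nu\rVert_\infty$ — completes the argument. The only point requiring genuine care is the bookkeeping ensuring that all constants are independent of $L$, $E_1$ and $E_2$ and depend on the energy only through $E_0$ (via $a$, $\epsilon_{\max}$ and $N(E_0)$); the single real cost of the method is that absorbing the logarithmic factor $\lvert\ln\epsilon\rvert^{d}$ forces one to give up an arbitrarily small amount of the exponent, so the natural rate $a$ is replaced by any $c<a$. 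If one wished to retain the exponent $a$, one would instead have to phrase the conclusion with a logarithmic modulus of continuity.
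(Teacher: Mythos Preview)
Your proof is correct and follows the same approach as the paper's: apply the Wegner estimate of Theorem~\ref{thm:Wegner} to the interval $[E_1,E_2]$, divide by $L^d$, pass to the limit, and absorb the logarithmic factor $\lvert \ln \epsilon\rvert^d$ into a slightly smaller H\"older exponent. In fact you are more careful than the paper, which silently ignores the constraints $\epsilon\le\epsilon_{\max}$ and $E\ge 0$ from Theorem~\ref{thm:Wegner}; your case distinction $E_2-E_1\lessgtr 2\epsilon_{\max}$ and the reduction to $E_1\ge 0$ handle exactly these points.
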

\begin{proof}
 For every $L \in 2 \NN -1$ we have
 \begin{align*}
  &\quad \frac{ \lvert \EE \left[ \mathrm{Tr} \left[ \chi_{(- \infty, E_2 ]}(H_{\omega,L}) \right] \right] - \EE \left[ \mathrm{Tr} \left[ \chi_{(- \infty, E_1 ]}(H_{\omega,L}) \right] \right] \rvert}{L^d}
   \leq
  \frac{ \EE \left[ \mathrm{Tr} \left[ \chi_{[E_1, E_2 ]}(H_{\omega,L}) \right] \right]}{L^d} \\
  & \leq C
\lVert \nu\rVert_\infty
\left\lvert \frac{E_2 - E_1}{2}\right\rvert^{[K(2+{\lvert E_0 + 1 \rvert}^{1/2})]^{-1}}
\cdot
\left\lvert\ln \frac{E_2 - E_1}{2} \right\rvert^d \\
& \leq \tilde C \lvert E_2 - E_1 \rvert^c.
\qedhere
 \end{align*}
\end{proof}
 \begin{remark}
 In \cite{NakicTTV-prep,TaeuferV} we establish the Wegner bound for a much more general class of random potentials.
 Here, for the sake of simplicity, we have restricted ourselves to the case of the standard random breather model.
 \end{remark}
In what we presented so far, the scale-free unique continuation principle was used to remove the so called \emph{covering condition}. In fact,  this condition featured in many older results on Wegner estimates, see for instance the original papers \cite{Kirsch-96,CombesH-94} or the detailed discussion in the monograph \cite{Veselic-08}. Since the covering conditions plays a role in other types of results on spectral properties of random Schr\"odinger operators, the scale-free unique continuation principle is a promising tool beyond just proofs of Wegner estimates. For instance, results of Shirley \cite{Shirley-14-dissertation} on Minami estimates and spectral statistics of one-dimensional models use the covering condition  as well.
It is natural to conjecture that the scale-free unique continuation principle can be used to remove this assumption.  Indeed, this has been carried out in the recent paper
\cite{Shirley-15}, see Theorem 1.1 there. It uses the  scale-free unique continuation principle  of \cite{NakicTTV-14} for one-dimensional configuration space, see
\cite[Theorem 4.1]{Shirley-15}.
\subsection{Control of the heat equation}
The aim here is to study in a multiscale geometry the control cost for the heat equation,
i.e. the infimum over $\cL^2$-norms of control functions which drive a system to zero at a prescribed time $T > 0$.
\par
We consider the controlled heat equation
\begin{equation} \label{eq:control_problem_concrete}
\begin{cases}
\partial_t u - \Delta u + Vu = f\chi_{W}, & u \in \cL^2([0,T] \times \Lambda),  \\
u = 0, & \text{on}\ (0,T) \times \partial \Lambda ,  \\
u(0,\cdot) =u_0, & u_0\in \cL^2(\Lambda),
 \end{cases}
 \end{equation}
where $\Lambda = \Lambda_L$ is a $d$-dimensional cube of side length $L \in \NN$ and $W$ is a union of $\delta$-balls within $\Lambda$, arizing from a $(1,\delta)$-equidistributed sequence.
In \eqref{eq:control_problem_concrete} $u$ is the state and $f$ is the control function which acts on the system through the control set $W \subset \Lambda$.
\par
We say that the system \eqref{eq:control_problem_concrete} is null controllable at time $T > 0$,
if there is for each initial state $u_0\in \cL^2(\Lambda)$ a control function $f \in \cL^2([0,T] \times W)$ such that the corresponding solution of \eqref{eq:control_problem_concrete} is zero at time $T$.
It is known, see for instance \cite{Fursikov-96} that the system \eqref{eq:control_problem_concrete} is null controllable at any time $T > 0$.
However, we want to estimate the cost, that is the $\cL^2$-norm of the control function $f \in \cL^2([0,T] \times W)$ in relation to the norm of the initial state $u_0$.
\par
The controllability cost $\mathcal{C}(T,u_0)$ at time $T$ for the initial state $u_0$ is given by
\[
\mathcal{C}(T,u_0) = \inf \left\{ \lVert f \rVert_{\cL^2([0,T]\times \omega )}\mid
u \
\text{is solution of \eqref{eq:control_problem_concrete} and }
u(T,\cdot)=0  \right\}.
\]
Combining Theorem~\ref{thm:NakicTTV} with results from \cite{Miller-10} one finds the following result, see \cite{NakicTTV-prep} for details.
\begin{theorem}
\label{thm:contcost}
For every $G > 0$, $\delta \in (0, G/2)$ and $K_V \geq 0$ there is $T' = T'(G, \delta, K_V) > 0$ such that
for all $T \in (0,T']$, all $(G,\delta)$-equidistributed sequences, all measurable and bounded $V: \RR^d \to \RR^d$ with $\lVert V \rVert_\infty \leq K_V$ and all $L \in G \NN$, the system \eqref{eq:control_problem_concrete} is null controllable on the set $W$ with cost $\mathcal{C}(T,u_0)$ satisfying
\[
\mathcal{C}(T,u_0) \leq 2 \sqrt{ a_0 b_0 } \euler^{c_{\ast} / T} \lVert u_0 \rVert_{\cL^2(\Lambda)} ,
\]
where
\begin{align*}
 a_0 &= (\delta / G)^{- K ( 1 + G^{4/3} \lVert V \rVert_\infty^{2/3})},\\
 b_0 &= \euler^{2 \lVert V \rVert_\infty},  \\
 c_{\ast} &\leq \ln (G/\delta)^2  \left( KG + 4 / \ln 2 \right)^2\ \text{and}\\
 K &= K (d)\ \text{is the constant from Theorem \ref{thm:NakicTTV}}.
 \end{align*}
\end{theorem}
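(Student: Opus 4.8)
The plan is to pass to the dual formulation and then feed the scale-free spectral inequality of Theorem~\ref{thm:NakicTTV} into the abstract machinery of \cite{Miller-10}. By the standard duality between null controllability and final-state observability, the system \eqref{eq:control_problem_concrete} is null controllable at time $T$ with $\mathcal{C}(T,u_0) \le \kappa \lVert u_0\rVert_{\cL^2(\Lambda_L)}$ for all $u_0$ if and only if the heat semigroup generated by $H_{\Lambda_L} = -\Delta + V$ (with Dirichlet conditions, which differs from the adjoint only by the harmless transposition of $V$) satisfies
\[
 \lVert \euler^{-T H_{\Lambda_L}} \phi \rVert_{\cL^2(\Lambda_L)}^2
 \le
 \kappa^2 \int_0^T \lVert \chi_W \euler^{-t H_{\Lambda_L}} \phi \rVert_{\cL^2(\Lambda_L)}^2 \, \drm t
 \qquad \text{for all } \phi \in \cL^2(\Lambda_L).
\]
Hence it suffices to establish this observability estimate with $\kappa = 2\sqrt{a_0 b_0}\,\euler^{c_\ast/T}$ and the stated constants.

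Miller's direct Lebeau--Robbiano strategy requires two inputs. The first is a spectral (uncertainty) inequality: for every $E \ge 0$ and every $\phi \in \ran \chi_{(-\infty,E]}(H_{\Lambda_L})$,
\[
 \lVert \phi \rVert_{\cL^2(\Lambda_L)}^2 \le C_{\sfuc}^{-1}\, \lVert \chi_W \phi \rVert_{\cL^2(\Lambda_L)}^2,
 \qquad
 C_{\sfuc}^{-1} = \Bigl(\tfrac{G}{\delta}\Bigr)^{K(1+G^{4/3}\lVert V\rVert_\infty^{2/3})}\, \euler^{\,K G \ln(G/\delta)\,\sqrt{E}} = a_0\, \euler^{\,K G \ln(G/\delta)\,\sqrt{E}} .
\]
This is precisely Theorem~\ref{thm:NakicTTV}; the decisive structural feature is that the energy enters only through $\sqrt{E}$, i.e.\ with a sublinear exponent $1/2 \in (0,1)$, which is exactly what makes the iteration converge. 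The second input is a dissipation estimate: since $H_{\Lambda_L} \ge -\lVert V\rVert_\infty$, the high-energy part decays as $\lVert (1-\chi_{(-\infty,E]}(H_{\Lambda_L}))\,\euler^{-tH_{\Lambda_L}}\phi\rVert \le \euler^{-t(E-\lVert V\rVert_\infty)}\lVert\phi\rVert$, and globally $\lVert \euler^{-tH_{\Lambda_L}}\rVert \le \euler^{t\lVert V\rVert_\infty}$; the factor $b_0 = \euler^{2\lVert V\rVert_\infty}$ bookkeeps the shift needed to make $\euler^{-tH_{\Lambda_L}}$ fit a contraction-type framework.

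With these two ingredients one applies the abstract theorem of \cite{Miller-10}: partition $[0,T]$ into a geometric sequence of subintervals, choose a sequence of cutoff energies $E_j$, kill the high-energy component on each subinterval by the dissipation estimate, control the low-energy component from below by the spectral inequality, and sum the resulting telescoping estimate. Balancing the $\euler^{KG\ln(G/\delta)\sqrt{E_j}}$ loss against the $\euler^{-\tau E_j}$ gain, and summing the geometric series (the ratio $2$ of the time partition producing the term $4/\ln 2$), yields the observability estimate with prefactor $2\sqrt{a_0 b_0}$ and $c_\ast \le \ln(G/\delta)^2(KG + 4/\ln 2)^2$; the threshold $T' = T'(G,\delta,K_V)$ enters only to absorb the finitely many lower-order boundary contributions of the iteration into these clean constants. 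Undoing the duality gives the control-cost bound.

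The conceptual content is entirely carried by Theorem~\ref{thm:NakicTTV}, which we assume; the remaining work is bookkeeping. The main obstacle is to invoke \cite{Miller-10} with the exact normalization of its hypotheses — matching the sublinear exponent $1/2$ and the explicit coefficient $KG\ln(G/\delta)$ in front of $\sqrt{E}$, and checking that the non-contractive semigroup $\euler^{-tH_{\Lambda_L}}$ is admissible after rescaling by $\euler^{t\lVert V\rVert_\infty}$ — so that the output constant comes out in precisely the stated form $c_\ast \le \ln(G/\delta)^2(KG + 4/\ln 2)^2$ rather than as an unspecified explicit quantity. This requires tracking Miller's optimization of the geometric time-partition quantitatively; everything else (the duality step, the spectral lower bound on $H_{\Lambda_L}$, the $\cL^2$-well-posedness of \eqref{eq:control_problem_concrete}) is routine.
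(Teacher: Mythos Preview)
Your proposal is correct and follows precisely the route indicated by the paper, which states only that the result follows by combining Theorem~\ref{thm:NakicTTV} with the abstract machinery of \cite{Miller-10} and defers details to \cite{NakicTTV-prep}. You have in fact supplied more detail than the paper itself: the duality reduction to final-state observability, the rewriting of $C_{\sfuc}^{-1}$ as $a_0\,\euler^{KG\ln(G/\delta)\sqrt{E}}$ to expose the sublinear exponent $1/2$ required by Miller's iteration, and the handling of the non-contractive semigroup via the shift by $\lVert V\rVert_\infty$ all match what the cited references do.
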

\begin{remark}
The same result holds also in the case of controlled heat equation with periodic or Neumann boundary conditions with obvious modifications. 	
\end{remark}
%
\subsection*{Acknowledgements}
The last named author would like to thank the organizers of
the School on Random Schr\"odinger Operators
and the International Conference on Spectral Theory and Mathematical Physics for the invitation and the hospitality
at the Pontificia Universidad Catolica de Chile,
Tomas Lungenstrass for taking notes of the minicourse,
and J.-M.~Barbaroux, N.~Peyerimhoff, G.~Raikov, C.~Rojas-Molina, A.~R\"uland, and C.~Shirley  for stimulating discussions.
Moreover, the authors thank I.~Naki\'c for ongoing discussions on control theory for the heat equation, T.~Kalmes for a careful reading of this manuscript.

\newcommand{\etalchar}[1]{$^{#1}$}
\providecommand{\bysame}{\leavevmode\hbox to3em{\hrulefill}\thinspace}
\providecommand{\MR}{\relax\ifhmode\unskip\space\fi MR }
\providecommand{\MRhref}[2]{%
  \href{http://www.ams.org/mathscinet-getitem?mr=#1}{#2}
}
\providecommand{\href}[2]{#2}

%
%
\end{document}